\newtheorem{prop}{Proposition}
\newtheorem{thm}{Theorem}
\newtheorem*{Mthm}{Main Theorem}
\newtheorem{lem}{Lemma}
\newtheorem{cor}{Corollary}
\theoremstyle{definition}
\newtheorem{eg}{Example}
\newtheorem{dfn}{Definition}
\newtheorem{rmk}{Remark}
\newcommand{\C}{\mathbb{C}}
\newcommand{\R}{\mathbb{R}}
\newcommand{\Z}{\mathbb{Z}}
\newcommand{\h}{\mathfrak{h}}
\title{Bubbling limits of non collapsing polarized K3 surfaces}
\author{Itsuki Tazoe }
\thanks{Department of Mathematics, Kyoto University, Kyoto 606-8285. JAPAN}
\email{tazoe.itsuki.52n@st.kyoto-u.ac.jp}
\begin{document}

\maketitle

\begin{abstract}
    We give an explicit and complete description of bubbling limits of a non-collapsing limit of polarized K3 surfaces in terms of the period mapping.
    In particular, we show that bubbling limits only depend on algebro-geometric data of the given family.
    As a corollary, this gives an affirmative answer to a conjecture of de Borbon--Spotti and confirms that Odaka's algebro-geometric candidate gives genuine bubbling limits in K3 surfaces case.
\end{abstract}

\tableofcontents

\section{Introduction}
Let $\{(X_j,g_j)\}_{j=1}^\infty$ be a sequence of compact K\"ahler manifolds which converges to a compact metric space $(X_\infty,d_\infty)$ in the Gromov-Hausdorff topology.
Assume that each $(X_j,g_j)$ is polarized, i.e. there is a positive line bundle $(L_j, h_j)$ on $X_j$ such that the K\"ahler form $\omega_j$ of $g_j$ is equal to the first Chern form $c_1(L_j, h_j)$.
If diameters, volumes and Ricci curvatures of the sequence are uniformly bounded, the limit space $(X_\infty, d_\infty)$ is called a \textit{non-collapsing limit} and it is known that the limit space is naturally a normal projective variety and the metric $d_\infty$ is defined by a weak K\"ahler metric $g_\infty$ by the seminal work of Donaldson-Sun \cite{DS}\cite{DS2}.
Furthermore, for a diverging sequence of real numbers $\{c_j\}_{j=1}^\infty$, $\{(X_j, c_j^2 g_j, x_j)\}_{j=1}^\infty$ contains a convergent subsequence, if base points $x_j \in X_j$ are chosen suitably, and the limit is an affine variety.
Such a \textit{rescaled limit} is called a \textit{bubbling limit} of the sequence $\{(X_j,g_j)\}_{j=1}^\infty$.
Non-collapsing limits and its bubbling limits are closely related to algebraic geometry but the relation is still mysterious.

For the case of polarized K3 surfaces, non-collapsing limits are well-understood.
In fact, the following is shown in Kobayashi-Todorov \cite{KT} (see also Proposition 6.7. in \cite{OO}).
\begin{thm}[Theorem 8. \cite{KT}, see also Proposition 6.7 in \cite{OO} and \cite{Anderson}]
    Let 
\begin{equation}
    (\mathcal{X}, \mathcal{L}) \to \Delta
\end{equation}
be a flat proper family of polarized K3 surfaces over the unit disc $\Delta \subset \C$.
Assume that the central fiber $X_0$ has ADE-type singularities.
For each fiber $(X_t, L_t)$, take the Ricci flat K\"ahler metric $g_t$ with the K\"ahler form $\omega_t \in c_1(L_t)$ (for the central fiber $X_0$, $g_0$ is taken as an orbifold metric).
Then
\begin{equation}
    (X_t, g_t) \to (X_0,g_0)
\end{equation} 
in the sense of Gromov-Hausdorff.
\end{thm}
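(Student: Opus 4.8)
The plan is to combine uniform geometric control along the family with the compactness theory for Einstein four-manifolds and an algebro-geometric identification of the limit. First I would record the elementary bounds: flatness makes $(L_t\cdot L_t)$ and hence $\mathrm{Vol}(X_t,g_t)=\tfrac12(L_t\cdot L_t)$ independent of $t$, so volumes are uniformly bounded above and below, and each $g_t$ is Ricci-flat, hence has Ricci curvature bounded below. The substantive point in this step is a uniform upper bound on $\mathrm{diam}(X_t,g_t)$ together with a non-collapsing estimate $\mathrm{Vol}(B_r(x))\ge \kappa r^4$, which (unlike the positive Einstein case) does not come for free. To obtain these I would build an explicit family of background K\"ahler forms $\omega_t^{\mathrm{bg}}\in c_1(L_t)$: pull the orbifold metric $g_0$ over to $X_t$ on the region away from the vanishing cycles, and graft in, near each ADE point, a Kronheimer ALE hyperk\"ahler metric on the minimal resolution of $\C^2/\Gamma$ rescaled by a parameter $\epsilon_t\to 0$ matched to the complex structure of $X_t$. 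By construction $(X_t,\omega_t^{\mathrm{bg}})$ has bounded geometry on a fixed scale off finitely many $\epsilon_t$-necks, uniformly bounded diameter, total volume tending to $\tfrac12(L_0\cdot L_0)$, and converges to $(X_0,g_0)$ in Gromov--Hausdorff (the ALE necks shrink to points). Solving the complex Monge--Amp\`ere equation for the Ricci-flat representative $\omega_t=\omega_t^{\mathrm{bg}}+i\partial\bar\partial\varphi_t$ of $c_1(L_t)$ and controlling $\varphi_t$ by weighted a priori estimates (Yau's $C^0$ estimate on the regions of bounded geometry, refined across the necks) then transports the diameter and non-collapsing bounds to $g_t$.

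With these bounds in hand I would, for any sequence $t_j\to 0$, invoke Gromov compactness to extract a subsequence with $(X_{t_j},g_{t_j})\to (Y,d_Y)$ in Gromov--Hausdorff. Since these are Ricci-flat four-manifolds with uniform volume, diameter and non-collapsing bounds, the $\epsilon$-regularity theory for Einstein four-manifolds (Anderson; Bando--Kasue--Nakajima; Tian) shows that $Y$ is a compact Ricci-flat K\"ahler orbifold with finitely many isolated quotient singularities, that the convergence is smooth away from those points, and that the bubbles forming at the singular points are ALE hyperk\"ahler. Applying Donaldson--Sun \cite{DS} to the polarized family, the Kodaira embeddings of $X_t$ by $H^0(X_t,L_t^{\otimes m})$ for a fixed large $m$ converge, so that $Y$ is canonically a normal polarized projective variety $(Y,L_Y)$ whose Ricci-flat K\"ahler metric represents $c_1(L_Y)$.

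Next I would identify $(Y,L_Y)$ with $(X_0,L_0)$. Embedding $\mathcal{X}\hookrightarrow \Delta\times\mathbb{P}^N$ by $\mathcal{L}^{\otimes m}$ exhibits the $X_t\subset\mathbb{P}^N$ as a flat family over $\Delta$ with fiber $X_0$ over $0$; the convergent embeddings from the previous step realize $Y$ as a limit of the very same one-parameter family of subschemes, and uniqueness of the flat limit (the family being already flat over all of $\Delta$) gives an isomorphism $(Y,L_Y)\cong (X_0,L_0)$. Equivalently, in the language of the paper, the hyperk\"ahler periods of the $g_{t_j}$ converge to the period of $g_0$ and the Torelli theorem for K3 orbifolds forces $Y\cong X_0$. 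Since the Ricci-flat orbifold K\"ahler metric on $X_0$ in $c_1(L_0)$ is unique (the orbifold Calabi--Yau theorem), $(Y,d_Y)$ is isometric to $(X_0,g_0)$. As every sequence $t_j\to0$ produces a subsequence with the same limit, the whole family converges: $(X_t,g_t)\to(X_0,g_0)$.

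The main obstacle is Step 1: ruling out diameter blow-up and collapse, equivalently showing that the bubbles forming over the ADE points do not absorb a definite fraction of the volume or stretch out the diameter. This is exactly where the ADE hypothesis on $X_0$ is essential — it is what makes the gluing ansatz (and the period-map control) available — whereas for Kulikov type II/III degenerations the Ricci-flat metrics collapse and the Gromov--Hausdorff limit drops dimension. A secondary delicate point is Step 3, matching the complex structure of the Gromov--Hausdorff limit with that of $X_0$ rather than with some other Ricci-flat K3 orbifold of the same volume; this is precisely what Torelli / the period map is used to settle.
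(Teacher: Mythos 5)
This theorem is quoted by the paper from \cite{KT} (see also Proposition~6.7 of \cite{OO} and \cite{Anderson}); the paper gives no proof of its own, so your proposal can only be compared with the arguments in those references. Your outline is a viable proof, but it follows the ``gluing'' route rather than the period-map route of the cited sources. There one does not construct the degenerating metrics by hand: Anderson-type compactness for unit-volume Einstein metrics on the K3 manifold produces, along any subsequence, an Einstein orbifold limit with smooth convergence away from finitely many points, and the limit is then identified with $(X_0,g_0)$ by continuity of the (hyperk\"ahler) period map together with the Torelli theorem for Einstein metrics on K3 --- essentially your Steps 2--4. The entire content of your Step 1 (grafting rescaled Kronheimer ALE metrics onto $g_0$ and perturbing by a Monge--Amp\`ere argument with weighted estimates across the necks) is thereby avoided; what replaces it is the a priori diameter/non-collapsing control, which in the polarized ADE setting is tied to the limiting period three-plane spanned by $\omega_0,\mathrm{Re}\,\Omega_0,\mathrm{Im}\,\Omega_0$ remaining positive definite because $(L_0)^2=2d>0$ (exactly what fails for Kulikov type II/III degenerations), rather than to an explicit metric ansatz.

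Two points in your write-up are thinner than they should be. First, a uniform $C^0$ bound on the potential $\varphi_t$ does not by itself transfer the diameter bound from $\omega_t^{\mathrm{bg}}$ to $\omega_t=\omega_t^{\mathrm{bg}}+i\partial\bar\partial\varphi_t$; you need uniform equivalence of the two metrics, i.e.\ the full weighted second-order estimates of the gluing theorem, which is a theorem-sized piece of analysis and not a refinement of Yau's $C^0$ estimate. Second, identifying the Donaldson--Sun limit with $X_0$ is not literally ``uniqueness of the flat limit'': the embeddings of \cite{DS} use $L^2$-orthonormal bases of $H^0(X_t,L_t^{\otimes m})$ and could a priori realize a different degeneration of the same family, so one needs either a separatedness/polystability argument or, as you also indicate, the Torelli theorem applied to the limiting period. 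With those two steps made precise the proposal is sound, but be aware that it is substantially more laborious than the argument the paper is actually invoking.
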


By the above theorem, non-collapsing limits of K3 surfaces are completely identified by algebro-geometric data of the family $(\mathcal{X}, \mathcal{L}) \to \Delta$.
For bubbling limits, as a corollary of Anderson \cite{Anderson}, Nakajima \cite{Nakajima}, Bando-Kasue-Nakajima \cite{BKN} and Bando \cite{Bando}, it is known that they are $\C^2$ or $\C^2/ \Gamma$ with the standard metric where $\Gamma \subset \mathrm{SL}(2;\C)$ if they are flat or complete hyperk\"ahler $4$-manifolds (possibly having orbifold singularities) the so-called ALE hyperk\"ahler gravitational instantons if they are non-flat.
ALE hyperk\"ahler gravitational instantons are completely classified by Kronheimer \cite{Kro1}\cite{Kro2}.
However, it has remained open to describe or \textit{determine explicitly} which ALE gravitational instantons appear as bubbling limits for a given family of polarized K3 surfaces.
The following classification theorem of bubbling limits of multi-Eguchi-Hanson spaces, which is a typical example of ALE gravitational instantons and provides a \textit{local model} of non-collapsing limits of K3 surfaces, is proven in de Borbon-Spotti \cite{dBS} :
\begin{thm}[Theorem 3. \cite{dBS}]
    Let $z_j : \Delta \to \C, \ (j=0,\ldots n)$ be holomorphic functions on the unit disc $\Delta = \{ z \in \C \mid |z| <1\}$ such that $z_j(0) =0, (j=0,\ldots , n)$ and $z_j(t) \neq z_k(t)$ if $j \neq k$ for $t\neq 0$.
    Consider the family $\mathcal{X} \to \Delta$ of affine surfaces defined by
    \begin{equation}
        \mathcal{X} = [\Pi_{j=0}^n (z-z_j(t)) = xy] \subset \C^3_{(x,y,z)} \times \Delta_t.
    \end{equation}
    Equip the multi-Eguchi-Hanson metric $g_t$ on $X_t = [\Pi_{j} (z-z_j(t)) = xy] \subset \C^3_{(x,y,z)}$. 
    Then its non-cone bubbling limits one to one correspond to vertices of a tree $\mathcal{T}$ constructed from $\{z_j(t)\}$.
\end{thm}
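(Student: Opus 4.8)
The plan is to reduce the statement to the Gibbons--Hawking description of multi-Eguchi--Hanson metrics and then to an elementary study of limits of point configurations in $\R^3$. First I would record the Gibbons--Hawking picture of $(X_t,g_t)$. The $S^1$-action $(x,y,z)\mapsto(e^{i\vartheta}x,e^{-i\vartheta}y,z)$ preserves $X_t$, its hyperk\"ahler moment map $\mu_t\colon X_t\to\R^3=\C\times\R$ is $(x,y,z)\mapsto\bigl(z,\tfrac12(|x|^2-|y|^2)\bigr)$, and its fixed locus $\{x=y=0\}$ lies over the $n+1$ points $p_j(t):=(z_j(t),0)\in\R^3$. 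Thus $g_t$ is the Gibbons--Hawking metric
\begin{equation}
    g_t=V_t\,|dp|^2+V_t^{-1}(d\vartheta+A_t)^2,\qquad V_t(p)=\sum_{j=0}^{n}\frac{1}{2\,|p-p_j(t)|},\qquad dA_t=\star\,dV_t,
\end{equation}
over $\R^3\setminus\{p_0(t),\dots,p_n(t)\}$; in particular the entire geometry of $(X_t,g_t)$ is encoded in the configuration $\{p_j(t)\}$, which is governed by the holomorphic data $\{z_j(t)\}$.

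Next I would exploit the scaling of the construction: for $c>0$ the metric $c^2g_t$ is isometric, via $p\mapsto c^2p$ on the base together with the matching rescaling of the circle fibre, to the Gibbons--Hawking metric with monopole points $\{c^2p_j(t)\}$. Hence, for $t_m\to0$, $c_m\to\infty$ and base points $x_m\in X_{t_m}$ with $q_m:=\mu_{t_m}(x_m)$, the pointed spaces $(X_{t_m},c_m^2g_{t_m},x_m)$ are isometric to Gibbons--Hawking spaces over $(\R^3,0)$ with monopole configuration $P_m:=\{\,c_m^2(p_j(t_m)-q_m):j=0,\dots,n\,\}$. I would then establish the basic convergence lemma: after passing to a subsequence so that each $c_m^2(p_j(t_m)-q_m)$ either converges to a finite point or diverges, let $w_1,\dots,w_r$ be the distinct finite limits and $m_i$ the number of $j$ accumulating at $w_i$; the harmonic potentials $V_m$ then converge locally uniformly on $\R^3\setminus\{w_1,\dots,w_r\}$ to $V_\infty=\sum_{i=1}^r\frac{m_i}{2\,|p-w_i|}$, and, upgrading this via elliptic estimates and the explicit local model near a cluster of monopole points, $(X_{t_m},c_m^2g_{t_m},x_m)$ converges in the pointed Gromov--Hausdorff sense (and in pointed $C^\infty$-Cheeger--Gromov sense away from the $w_i$) to the (orbifold) multi-Eguchi--Hanson space with monopole data $\{(w_i,m_i)\}$, which carries an $A_{m_i-1}$-singularity over each $w_i$ with $m_i\ge2$. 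When $r\le1$ the limit is flat, i.e. $\C^2$ or $\C^2/\Z_{m_1}$ — a metric cone, and these are exactly the cone bubbling limits excluded in the statement.

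It remains to match the convergence lemma with the combinatorics. Since the $z_j$ are holomorphic with $z_j(0)=0$, for $j\ne k$ we may write $z_j(t)-z_k(t)=a_{jk}t^{v_{jk}}+O(t^{v_{jk}+1})$ with $a_{jk}\ne0$ and $v_{jk}\in\Z_{\ge1}$, so $|p_j(t)-p_k(t)|\asymp|t|^{v_{jk}}$. The order inequality $v_{jk}\ge\min(v_{j\ell},v_{\ell k})$ makes $v$ an ultrametric on $\{0,\dots,n\}$, hence defines the rooted tree $\mathcal T$ of \cite{dBS}: for $\alpha>0$ the relation $j\sim_\alpha k\iff v_{jk}\ge\alpha$ is an equivalence relation whose blocks refine as $\alpha$ grows, and the vertices of $\mathcal T$ are the blocks that occur, each branching at the order at which it splits into its children. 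After a further subsequence one may assume $c_m^2\asymp|t_m|^{-\alpha}$; then $c_m^2|p_j(t_m)-p_k(t_m)|\asymp|t_m|^{\alpha-v_{jk}}$ tends to $0$ if $v_{jk}>\alpha$, to $\infty$ if $v_{jk}<\alpha$, and is $\asymp1$ if $v_{jk}=\alpha$. So if $\alpha$ is not one of the finitely many values $v_{jk}$, every cluster of $P_m$ collapses to a point, $r\le1$, and the limit is a cone. If $\alpha=v_C$ is the branching order of a vertex $C$ with children $C_1,\dots,C_r$ ($r\ge2$), and one takes $q_m=p_{j_0}(t_m)$ for a fixed $j_0\in C$ (with $t_m\to0$ along a ray), then for $j\in C$ one gets $c_m^2(p_j(t_m)-q_m)\to(w_j,0)$ with $w_j=\lim_{t\to0}t^{-\alpha}(z_j(t)-z_{j_0}(t))$, the distinct values of $w_j$ being in bijection with the children $C_i$, with multiplicity $m_i=|C_i|$, while for $j\notin C$ the points diverge; by the convergence lemma the resulting bubbling limit is precisely the multi-Eguchi--Hanson space attached to the vertex $C$, and it is non-flat because $r\ge2$. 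This construction realises every vertex of $\mathcal T$; conversely any non-cone bubbling limit forces, after passing to a subsequence, $\alpha=v_C$ for some branching vertex $C$ and $q_m$ at bounded rescaled distance from the cluster $C$, so it is one of these (translating or rotating the base point only changes the limit by an isometry). This gives the asserted one-to-one correspondence.

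The step I expect to be the main obstacle is the convergence lemma of the second paragraph — the neck analysis. One must show that when a cluster of $m_i$ monopole points collapses at the bubble scale, the surrounding neck region converges, with no loss of curvature or topology, to a clean $A_{m_i-1}$-orbifold singularity rather than to something with extra topology concealed at intermediate scales, and simultaneously that no spurious bubbling occurs at scales strictly between $v_C$ and the next critical value realised inside $C$. Because all the metrics here are explicit Gibbons--Hawking metrics built from sums of Green's functions — whose rescaled limits are entirely transparent — this is in principle checkable; but carrying it out uniformly over all cluster configurations, and matching the outcome with Kronheimer's classification so that the limit can be named unambiguously as ``the multi-Eguchi--Hanson space of the vertex $C$'', is the substantive part of the argument. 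Once the lemma is in place, the tree combinatorics and the (easy) realisation direction are routine.
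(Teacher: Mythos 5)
Your outline is essentially correct, but it proves the statement by a route genuinely different from anything in this paper: the theorem is quoted here from de Borbon--Spotti and is not proved in the text, and the paper's own derivation of the same tree (Section \ref{sec:Examples}) goes through the period map and Kronheimer's Torelli theorems rather than through the explicit metric. Concretely, the paper translates the configuration $\{a_j(t)\}$ into the curve $\zeta(t)=\sum a_j(t)\theta_j$ in a Cartan subalgebra of type $A_k$, observes that $\langle e_j-e_l,\zeta_0\rangle=\lim_{t\to 0}(a_j(t)-a_l(t))/t^{n-1}$, so that the children of a vertex of $\mathcal T$ correspond exactly to the maximal irreducible sub-root systems perpendicular to the leading coefficient, and then names each bubble as $Y_{\zeta_v}$ via the orbifold Torelli theorem; the analytic input is Proposition \ref{prop:diameter estimate of vanishing cycles}, which bounds the diameter of a contracted $2$-cycle representing a root $\theta$ by $|\langle\theta,\zeta(t)\rangle|^{1/2}$. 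Your Gibbons--Hawking argument is more elementary and completely explicit, but it is confined to the $A_k$ case, whereas the period-theoretic route extends to $D$ and $E$ singularities; and the step you rightly single out as the substantive one --- that a collapsing cluster of $m_i$ monopole points yields a clean $A_{m_i-1}$ orbifold point with no topology hidden at intermediate scales, and that nothing bubbles at non-critical scales --- is precisely what the diameter estimate plus Torelli is designed to replace. If you write your version up, tighten two points: after passing to a subsequence you can only arrange $\log c_m^2/\log|t_m|^{-1}\to\alpha$, not a two-sided bound $c_m^2\asymp|t_m|^{-\alpha}$, so scalings drifting slowly between critical exponents need a separate (easy) word; and the singleton leaves of $\mathcal T_0$ survive the contraction of one-child vertices but give flat $\C^2$, i.e.\ cones, so your bijection is really with the non-singleton vertices --- make sure your tree matches the convention of the cited paper.
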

See section \ref{sec:Examples}, or section 3.1. of the original paper\cite{dBS}, for more details (in particular, the construction of the tree $\mathcal{T}$).
A key point of the theorem is that the tree $\mathcal{T}$ is completely and \textit{explicitly determined} by $\{z_j(t)\}$, the algebro-geometric data describing the family $\mathcal{X} \to \Delta$.
Also note that, in Odaka \cite{odaka25}, a \textit{candidate} of a purely algebro-geometric construction of bubbling limits is proposed.

In the present paper, a complete and explicit description of bubbling limits of non-collapsing limits of polarized K3 surfaces is given in terms of the period map of polarized K3 surfaces, the algebro-geometric data describing the given family. 
This gives an affirmative answer to Conjecture 1. in \cite{dBS} for the case of K3 surfaces including $D_n$ and $E_n$ type singularities and confirms Odaka's purely algebro-geometric candidate gives genuine bubbling limits in this case.

To state our main theorem, some notions, \textit{period bubbling tree} and \textit{metric bubbling tree}, are needed (see Section \ref{section:bubbling trees} for details).
Consider a flat proper family 
\begin{equation}
    (\mathcal{X}, \mathcal{L}) \to \Delta
\end{equation}
of polarized K3 surfaces over the unit disc $\Delta \subset \C$ with smooth general fibers and a singular central fiber.
Take a minimal simultaneous resolution 
\begin{equation}
    \widetilde{\mathcal{X}} \to \mathcal{X}
\end{equation}
of the family after a suitable base change (if it is necessary).
Let $\widetilde{\mathcal{L}}$ be the pullback of $\mathcal{L}$ on $\widetilde{\mathcal{X}}$.
By taking Ricci-flat K\"ahler metrics $\omega_t \in c_1(L_t)$ on $X_t$, holomorphic volume forms $\Omega_t$ on $X_t$ and a simultaneous marking 
\begin{equation}
    \alpha : H^2(X_t ;\Z) \to L,
\end{equation}
where $L$ is the K3 lattice, the so-called \textit{period mapping} is defined as follows:
\begin{equation}
\begin{aligned}
    \mathcal{P} :\Delta &\to L_{2d,\C}\  (:= (\alpha(c_1(L_t)))^{\perp} \subset L)\\
    t &\mapsto \alpha([\Omega_t]).
\end{aligned}
\end{equation}
Let $x_0 \in X_0 \cong 0 \in \C^2/\Gamma$ be a singularity.
Let $E_1,\ldots, E_n$ be the irreducible components of the exceptional divisor of the minimal resolution.
Then the complexification of a sub-lattice 
\begin{equation}
\h := \mathrm{Span}_{\Z} \{\theta_1,\ldots,\theta_n\} \subset L_{2d}
\end{equation}
is isometric to a Cartan sub-algebra of a simple complex Lie algebra of the same ADE-type with $x_0 \in X_0 \cong 0 \in \C^2/\Gamma$, where $\theta_j = \alpha([E_j])$.
Let
\begin{equation}
    \zeta_{\h} = \pi_\h \circ \mathcal{P}
\end{equation}
where $\pi_\h$ is the orthogonal projection to $\h_\C \subset L_{2d, \C}$.
A \textit{period-bubbling tree} $\mathcal{PBT}_\zeta$ of $\zeta$ is defined as follows:
\begin{itemize}
    \item the root is $[\zeta_r] \in \mathbb{P}(\h_\C)$ if $\zeta (t) = t^k \zeta_r + O(t^{k+1})$
    \item a vertex $[\zeta_w]\in \mathbb{P}(\h_{\zeta_w})$ is a child of $[\zeta_v] \in \mathbb{P}(\h_{\zeta_v})$ if and only if $\pi_{\zeta_{w}} \circ \zeta(t) = t^l \zeta_w + O(t^{l+1})$ where $\pi_{\zeta_{w}}$ is the projection to a subspace $\h_{\zeta_{w}}$ spanned by a maximal irreducible sub-root system of $\{\theta \in \h_{\zeta_v} \mid \langle \theta,\zeta_{v}\rangle = 0\}$ containing $\zeta_w$.
\end{itemize}
Here, a tree is a finite poset $T$ such that $u,v \geq w$ implies either $u \geq v$ or $v \geq u$ and having a unique maximal element.
The maximal element is called as the \textit{root} of $T$.
A vertex $v$ is the parent of $w$ if $v = \min \{u \in T \mid u > w\}$ and $w$ is a child of $v$ if $v$ is the parent of $w$.

Another tree $\mathcal{MBT}_{x_0}$, a \textit{metric-bubbling tree}, which is a tree of \textit{genuine bubbling limits at $x_0$}, is constructed as follows (see also section \ref{sec:mbt}).
As a set, $\mathcal{MBT}_{x_0}$ consists of equivalence classes of pairs of sections $\sigma : \Delta \to \mathcal{X}$ through $x_0$ and scaling factors $c(t) : \Delta \to \R_{> 0}$ which gives an affine ALE instanton $B$ with $(X_t, c_t^2g_t, \sigma_t) \to B$. Precisely, 
\begin{equation}
    \mathcal{MBT}_{x_0} = \{(\sigma,c) \mid B =\lim (X_t, c^2(t)g_t, \sigma(t))\ \text{is a non-cone bubbling limits} \}/\sim
\end{equation}
where $(\sigma_1, c_1) \sim (\sigma_2,c_2)$ if and only if 
\begin{itemize}
    \item $\limsup c_1(t) c^{-1}_2(t) , \limsup c_2(t) c_1^{-1}(t) <\infty$ and
    \item $\limsup c_1(t) d_{g_t} (\sigma_1(t), \sigma_2(t)), \limsup c_2(t) d_{g_t}(\sigma_1(t), \sigma_2(t)) <\infty$.
\end{itemize}
The preorder of $\mathcal{MBT}_{x_0}$ is defined by 
\begin{equation}
    [(\sigma_1,c_1)]  \geq [(\sigma_2, c_2)] :\Leftrightarrow
    \limsup c_2(t)^{-1} c_1(t)<\infty\  \text{and}\   \limsup c_1(t) d_{g_t} (\sigma_1(t), \sigma_2(t)) <\infty.
\end{equation}
Note that each equivalence class $[(\sigma(t), c(t))]$ gives a unique affine ALE instanton $B$ as the limit of $(X_t, c^2(t)g_t, \sigma(t))$.
Hence an element $[(\sigma(t), c(t))]$ of $\mathcal{MBT}_{x_0}$ is often identified with a bubbling limit $B$ at $x_0$ by denoting $B = [(\sigma(t), c(t))]$ to indicate $\lim (X_t, c^2(t)g_t, \sigma(t)) = B$.

Our main theorem of the present paper is the following.
\begin{Mthm}[Theorem \ref{thm:main theorem}]
    For the given family $(\mathcal{X}, \mathcal{L})\to \Delta$ and a singularity $x_0 \in X_0$, there is a poset isomorphism 
    \begin{equation}
        f: \mathcal{PBT}_{\zeta} \to \mathcal{MBT}_{x_0}
    \end{equation}
    such that $f(\zeta_v) = [(\sigma(t),c(t))] = B$ implies $B \cong Y_{\zeta_v}$ where $Y_{\zeta_v}$ is Kronheimer's (orbifold) ALE gravitational instanton (see \ref{sec:Kronheimer's theory on ALE} for its definition).
    \end{Mthm}
 Note that by Kronheimer's period theory of ALE gravitational instantons, $Y_{\zeta_v}$ are determined explicitly and hence the above theorem gives \textit{complete and explicit} description of bubbling limits of the given family in terms of algebro-geometric data of the given family.
 
 This paper is organized as follows: Section \ref{sec:preliminary} is devoted to preliminaries, convergence notions of K\"ahler manifolds, Kronheimer's theory on ALE hyperk\"ahler gravitational instantons and period theory of polarized K3 surfaces. 
 In section \ref{section:bubbling trees}, constructions of bubbling trees are explained in detail.
In section \ref{sec:main}, the main theorem is proved.
In section \ref{sec:Examples}, some examples and comparison of the main results with local models by de Borbon-Spotti and algebro-geometric models by Odaka are given.
In appendix some remarks on Kronheimer's results on ALE gravitational instantons are presented.

\textbf{Acknowledgement}. The author would like to thank to Yuji Odaka, his supervisor, and Cristiano Spotti, his co-supervisor, for their invaluable advice and
numerous enlightening discussions during the development of this work.
He also would like to thank to Yu-shen Lin, Song Sun and Junsheng Zhang for helpful discussions.
Also he would like to thank M. Enokizono and C. Spotti for letting him know some articles crucial for the present paper.
The author is supported by JSPS KAKENHI Grant Number JP25KJ1453.

\section{Preliminaries}\label{sec:preliminary}
\subsection{Convergence notions in Riemannian Geometry}
In this section, the notions of convergences in Riemannian and K\"ahlerian geometries are introduced.
\begin{dfn}
   Let $A_1, A_2 \subset X$ be compact subsets of a metric space $(X,d)$. 
   Then their Hausdorff distance $d_H (A_1,A_2)$ is given by 
   \begin{equation}
       d_H(A_1,A_2) = \inf \{r \mid \mathrm{B}(A_1;r) \supset A_2, \mathrm{B}(A_2;r) \supset A_1\}.
   \end{equation}

Let $X=(X,d_X), Y=(Y,d_Y)$ be compact metric spaces.
Then their Gromov-Hausdorff distance $d_{GH}(X,Y)$ is given by an infimum of Hausdorff distances among all embeddings $X,Y \hookrightarrow Z$ to some metric spaces $Z$:
\begin{equation}
    d_{GH} (X,Y) = \inf\{ d_H(\varphi_1(X), \varphi_2(Y)) \mid \varphi_j :X,Y \hookrightarrow Z\}.
\end{equation}
     A sequence of metric spaces $\{X_j\}_{j=1}^\infty$ converges to a compact metric space $X$ in the sense of Gromov-Hausdorff if $d_{\mathrm{GH}}(X_j,X) \to 0$.
\end{dfn}
Gromov-Hausdorff convergence of metric spaces is a kind of \textit{uniform convergence} in metric spaces. 
The following notion is a kind of \textit{locally uniform convergence} in metric spaces:
\begin{dfn}
    Let $\{(X_j, d_j, x_j)\}_{j=1}^\infty$ be a sequence of proper metric spaces (i.e. every closed ball of finite diameter is compact) with base points. 
    Then $\{(X_j,d_j,x_j)\}_{j=1}^\infty$ converges to a pointed proper metric space $(X,d,x)$ in the pointed Gromov-Hausdorff sense if $\{(\mathrm{B}_{X_j}(x_j; R),d_j)\}$ converges to $(\mathrm{B}_X(x;R),d)$ in the Gromov-Hausdorff sense with $x_j \to x$ for any $R>0$.
\end{dfn}

The next proposition is a fundamental one to compare geometries of $\{(X_j,d_j)\}_{j=1}^\infty$ and $(X,d)$ (a proof is a simple exercise).
\begin{dfn}
    For a positive number $\varepsilon>0$, an $\varepsilon$-approximating map $f: X \to Y$ between metric spaces is a map (not necessarily continuous!) such that 
    \begin{itemize}
        \item $f(X)$ is $\varepsilon$ dense in $Y$, i.e. $\mathrm{B}(y;\varepsilon) \cap f(X) \neq \varnothing$ for any $y \in Y$ and 
        \item $|d_Y(f(p), f(q)) - d_X(p,q) | \leq \varepsilon$ for any $p,q \in X$.
    \end{itemize}
\end{dfn}
\begin{prop}
    $\{(X_j,d_j)\}_{j=1}^\infty$ converges to $(X,d)$ if and only if for any $\varepsilon>0$ there exists sufficiently large $N$ so that for any $j > N$ we have an $\varepsilon$-approximating map $f_j: X_j \to X$.
\end{prop}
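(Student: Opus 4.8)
The statement is the standard characterization of Gromov--Hausdorff convergence through $\varepsilon$-approximating maps, so the plan is simply to establish the two implications. For the ``only if'' direction I would fix $\varepsilon>0$ and use $d_{GH}(X_j,X)\to 0$ to obtain, for all large $j$, a metric space $Z$ with isometric embeddings $\iota_j\colon X_j\hookrightarrow Z$ and $\iota\colon X\hookrightarrow Z$ whose images lie within Hausdorff distance $\varepsilon/3$. For each $p\in X_j$ choose $f_j(p)\in X$ with $d_Z(\iota_j(p),\iota(f_j(p)))<\varepsilon/3$; the triangle inequality in $Z$ then gives $|d_X(f_j(p),f_j(q))-d_{X_j}(p,q)|<\varepsilon$ for all $p,q$, and the Hausdorff bound forces $f_j(X_j)$ to be $\varepsilon$-dense in $X$, so $f_j$ is an $\varepsilon$-approximating map.

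For the ``if'' direction the key construction is to glue $X_j$ to $X$ along an $\varepsilon$-approximating map $f_j$. On the disjoint union $Z=X_j\sqcup X$ I would keep $d_{X_j}$ and $d_X$ on the two pieces and set the cross distances to be $d_Z(p,y):=\inf_{q\in X_j}\bigl(d_{X_j}(p,q)+\varepsilon+d_X(f_j(q),y)\bigr)$ for $p\in X_j,\ y\in X$. One then checks that $d_Z$ is a genuine metric extending the two given ones: positivity is immediate since $d_Z(p,y)\ge\varepsilon$, the two ``mixed'' triangle inequalities passing through a point of $X_j$ (respectively of $X$) hold automatically, and the remaining two, $d_X(y,y')\le d_Z(p,y)+d_Z(p,y')$ and $d_{X_j}(p,q)\le d_Z(p,y)+d_Z(q,y)$, follow by feeding the distortion bound $|d_X(f_j(q),f_j(q'))-d_{X_j}(q,q')|\le\varepsilon$ through the infima. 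Since $d_Z(p,f_j(p))\le\varepsilon$ and, by $\varepsilon$-density, every $y\in X$ lies within $2\varepsilon$ of $f_j(X_j)$, the images of $X_j$ and $X$ in $Z$ have Hausdorff distance at most $2\varepsilon$; hence $d_{GH}(X_j,X)\le 2\varepsilon$ for all large $j$, and since $\varepsilon$ was arbitrary this yields $d_{GH}(X_j,X)\to 0$.

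The only place any care is needed is the verification of those last two triangle inequalities for $d_Z$: this is exactly where the additive constant $\varepsilon$ in the definition of the cross distance is used, and without it $d_Z$ need not be a metric. Everything else is routine. The same gluing argument, applied ball by ball, gives the analogous equivalence for pointed Gromov--Hausdorff convergence.
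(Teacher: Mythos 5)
Your proof is correct; the paper itself leaves this proposition as "a simple exercise" and gives no proof, and your argument (pulling back a near-optimal common embedding for one direction, and gluing $X_j$ to $X$ along $f_j$ with an additive $\varepsilon$ in the cross-distance for the other) is exactly the standard one the author has in mind. The only cosmetic slip is the phrase "every $y\in X$ lies within $2\varepsilon$ of $f_j(X_j)$" --- $\varepsilon$-density gives distance at most $\varepsilon$ inside $X$, and the extra $\varepsilon$ comes from the cross-distance in $Z$, which is what actually yields $d_H\le 2\varepsilon$.
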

In the present paper, a stronger notion of a convergence, which is more appropriate for K\"ahler geometry and used in the literature (see \cite{DS} , \cite{Song} and their references for example), is used: Let $\{(X_j,g_j)\}_{j=1}^\infty$ be a sequence of K\"ahler manifolds and $(X_\infty, g_\infty)$ be a K\"ahler space (i.e. normal analytic space $X_\infty$ with a K\"ahler metric $g_\infty$ on the regular locus $X^{\mathrm{reg}}$ with bounded potentials around the singular locus).
For simplicity we assume that the metric completion $(\overline{X_\infty^{\mathrm{reg}}}, d_\infty)$ of $(X_\infty^{\mathrm{reg}}, g_\infty)$ is homeomorphic to $X_\infty$.
Then $\{(X_j, g_j)\}_{j=1}^\infty$ converges to $(X_\infty, g_\infty)$ if 
\begin{itemize}
    \item $(X_j, g_j) \to (X_\infty, d_\infty)$ in the Gromov-Hausdorff sense and
    \item for any compact subset $K \subset X_\infty^{\mathrm{reg}}$ there are (not necessarily holomorphic) open embeddings $\iota_j : U \to X_j$, where $U \subset X^{\mathrm{reg}}_\infty$ is an open neighbourhood of $K$, such that 
    \begin{equation}\label{def:definition of convergence of Kahler manifolds}
        \iota^*_j g_j \to g_\infty, \iota_j^* \omega_j \to \omega_\infty
    \end{equation}
    in $C^\infty$ topology on $K$.
\end{itemize}
\begin{rmk}
    The convergence (\ref{def:definition of convergence of Kahler manifolds}) implies that the complex structures on $X_j$ also converges to the complex structure on the limits space $X_\infty$ (as tensors) in the similar manner.
\end{rmk}
\begin{rmk}
    The convergence of K\"ahler forms is automatic in the following sense:
    Let $\{(X_j,g_j)\}_{j=1}^\infty$ be a convergent sequence of Riemannian manifolds.
    Assume that all $(X_j,g_j)$ are K\"ahler.
    Then $\{(X_j,g_j)\}_{j=1}^\infty$ contains a subsequence which is a convergent sequence of K\"ahler manifolds in the sense of (\ref{def:definition of convergence of Kahler manifolds}).
    In fact, let $x \in K \subset X_\infty^{\mathrm{reg}}$ be a smooth point and consider $\{\iota_j ^*\omega_{j,x} \}_{j=1}^\infty \subset \wedge^2 T^*_x X_\infty$.
    This is a bounded set with respect to $\| \cdot\|_{g_\infty}$ since $\|\omega_j\|_{g_j} = 1$ and $\iota_j^*g_j \to g_\infty$.
    Hence there is a convergent subsequence $\{\omega_{j_k,x}\}_{k=1}^\infty$ converging to $\omega_{\infty,x} \in \wedge^2 T^*_x X_\infty$ .
    A standard argument using the parallel transport of $g_\infty$ implies $\omega_{\infty,x}$ defines a K\"ahler form $\omega_\infty$ on $X_\infty^{\mathrm{reg}}$ as desired.
\end{rmk}
\begin{rmk}
    The assumption $X_\infty$ to be a normal analytic space is a natural one at least under \textit{non-collapsing} condition.
    In fact, if $\{(X_j,g_j)\}$ are projective K\"ahler-Einstein manifolds with uniformly bounded diameters from above and uniformly bounded volumes from below,
    then the limit space is a projective K\"ahler space at worst klt singularities \cite{DS}.
    
\end{rmk}

\subsection{Kronheimer's theory on ALE hyperk\"ahler gravitational instantons}\label{sec:Kronheimer's theory on ALE}
In this section, Kronheimer's theory on ALE hyperk\"ahler gravitational instantons, which appear as bubbling limits of non-collapsing Gromov-Hausdorff limits of K3 surfaces, is reviewed by following the original papers \cite{Kro1} \cite{Kro2}.

\begin{dfn}
    An ALE hyperk\"ahler gravitational instanton $Y = (Y,g,I,J,K)$ is a non-compact and complete hyperk\"ahler $4$-manifold such that there is a diffeomorphism
    \begin{equation}\label{eq:coordinate at infinity}
    \varphi : (\C^2 \setminus \mathrm{B}(0;R))/\Gamma \to Y\setminus K
    \end{equation}
    for some compact subset $K \subset Y$, $R >0$ and a finite subgroup $\Gamma \subset \mathrm{SL}(2;\C)$ such that 
    \begin{equation}\label{cond:asymptotic of ALE at infinity}
    \left\{
    \begin{aligned}
        \|\nabla^k(g^\mathrm{Euc} - \varphi^*g) \|_{g^{\mathrm{Euc}}} &= O(r^{-4-k}), \\
        \|\nabla^k( \omega_I^{\mathrm{Euc}} - \varphi^*\omega_I) \|_{g^\mathrm{Euc}} &= O(r^{-4-k}), \\
        \|\nabla^k( \omega_J^{\mathrm{Euc}} - \varphi^*\omega_J) \|_{g^\mathrm{Euc}} &= O(r^{-4-k}),\ \ \\
        \|\nabla^k( \omega_K^{\mathrm{Euc}} - \varphi^*\omega_K) \|_{g^\mathrm{Euc}} &= O(r^{-4-k}),
        \end{aligned}\right.
    \end{equation}
    as $r = \|z\|\to \infty$ for any positive integer $k$. 
\end{dfn}
Note that there is a canonical hyperk\"ahler structure on $\C^2$ via the identification $\mathbb{H} \cong \C^2$ with Hamilton's quaternion number $\mathbb{H}$.
The diffeomorphism in (\ref{eq:coordinate at infinity}) is called a \textit{coordinate at infinity} and $Y$ is called an ALE hyperk\"ahler gravitational instanton of type $\Gamma$.
The main theorems of Kronheimer\cite{Kro1}\cite{Kro2} are the following:

\begin{thm}\label{thm:Hyperkahler structure on the minimal resolution of Kleinnian singularity}

    Let $\Gamma \subset \mathrm{SL}(2;\C)$ be a finite subgroup and let $Y$ be the underlying differentiable $4$-manifold of the minimal resolution of $\C^2/\Gamma$.
    For a triple $\bm{\kappa} = (\kappa_1,\kappa_2, \kappa_3) \in H^2(Y;\R)^3$, consider the following condition:
    \begin{equation}\label{cond:generality condition}
        \mathrm{For\ any}\ \theta \in H_2(Y,\Z)\ \mathrm{with}\ \theta\cdot\theta =-2,\ \langle\theta,\kappa_i\rangle \neq 0\ \mathrm{at\ least\ one}\ \kappa_i.
    \end{equation}
    Then for any triple $\bm{\kappa} \in H^2(Y; \R)^3$ satisfying the condition (\ref{cond:generality condition}), there exists a unique hyperk\"ahler structure $(g_{\bm{\kappa}}, I_{\bm{\kappa}}, J_{\bm{\kappa}}, K_{\bm{\kappa}})$ on $Y$ such that $(Y,g_{\bm{\kappa}},I_{\bm{\kappa}}, J_{\bm{\kappa}}, K_{\bm{\kappa}})$ is an ALE hyperk\"ahler gravitational instanton and the triple $ ([\omega_{I_{\bm{\kappa}}}], [\omega_{J_{\bm{\kappa}}}], [\omega_{K_{\bm{\kappa}}}])$ is equal to the given $\bm{\kappa}$.
\end{thm}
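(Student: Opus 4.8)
The plan is to prove existence by Kronheimer's hyperk\"ahler quotient construction \cite{Kro1} and uniqueness by his Torelli-type theorem \cite{Kro2}. For existence, I would first set up the relevant flat hyperk\"ahler model from the representation theory of $\Gamma$. Let $R=\bigoplus_i R_i\otimes V_i$ be the regular representation of $\Gamma$ decomposed into irreducibles $R_i$ with multiplicity spaces $V_i$, and let $Q\cong\C^2$ be the defining representation of $\Gamma\subset \mathrm{SL}(2;\C)$. The space $M$ of $\Gamma$-equivariant representations of the doubled McKay quiver, a linear subspace of the quaternionic vector space $\mathrm{Hom}(R,R)\otimes Q$, carries a natural flat hyperk\"ahler structure, on which the gauge group $\mathcal{G}=\bigl(\prod_i U(V_i)\bigr)/U(1)$ acts with hyperk\"ahler moment map $\mu=(\mu_1,\mu_2,\mu_3)\colon M\to(\mathfrak{g}^*)^{3}$. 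The McKay correspondence identifies the centre of $\mathfrak{g}^*$ with $H^2(Y;\R)$ so that the classes $[E_j]$ of the exceptional curves become the simple roots of the ambient ADE root system and the intersection form becomes minus the Cartan matrix. The key point is that the genericity condition (\ref{cond:generality condition}) on a central parameter $\bm{\zeta}=(\zeta_1,\zeta_2,\zeta_3)$ is equivalent to $\mathcal{G}$ acting freely on $\mu^{-1}(\bm{\zeta})$: a nontrivial stabiliser would arise from a proper subrepresentation, whose dimension vector pairs with the $\zeta_i$ to produce a class $\theta$ with $\theta\cdot\theta=-2$ orthogonal to all three $\zeta_i$. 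Hence $X_{\bm{\zeta}}:=\mu^{-1}(\bm{\zeta})/\mathcal{G}$ is a smooth hyperk\"ahler $4$-manifold.

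Next I would verify the asymptotic and topological claims. Flatness of the ambient metric, together with an explicit comparison with the quotient at $\bm{\zeta}=0$ (which is $\C^2/\Gamma$ with its cone metric), shows that $X_{\bm{\zeta}}$ is ALE of type $\Gamma$ with exactly the decay (\ref{cond:asymptotic of ALE at infinity}); here one uses that turning on $\bm{\zeta}$ only alters the geometry over a compact region, the ends being naturally identified. The set of $\bm{\zeta}$ satisfying (\ref{cond:generality condition}) is connected, its complement being a finite union of real codimension-$3$ subspaces, so the diffeomorphism type of $X_{\bm{\zeta}}$ does not depend on $\bm{\zeta}$; evaluating at a conveniently chosen parameter --- one for which a single complex structure presents $X_{\bm{\zeta}}$ as the Brieskorn--Tyurina simultaneous resolution of the rational double point $\C^2/\Gamma$ --- identifies it with the minimal resolution $Y$. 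Finally, a moment-map (Duistermaat--Heckman) computation of the periods of $\omega_{I_{\bm{\zeta}}},\omega_{J_{\bm{\zeta}}},\omega_{K_{\bm{\zeta}}}$ over the exceptional cycles shows $([\omega_{I_{\bm{\zeta}}}],[\omega_{J_{\bm{\zeta}}}],[\omega_{K_{\bm{\zeta}}}])=\bm{\zeta}$ under the McKay identification, so taking $\bm{\zeta}=\bm{\kappa}$ produces the desired instanton.

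For uniqueness, the plan is to show that the period map, from ALE hyperk\"ahler structures on $Y$ modulo diffeomorphism to generic triples in $H^2(Y;\R)^3$, is injective, via an open--closed argument. Openness is infinitesimal rigidity: the period map is a local diffeomorphism, which follows from Hodge theory on ALE $4$-manifolds --- the space of $L^2$ harmonic $2$-forms is $H^2(Y;\R)$, the hyperk\"ahler deformations are unobstructed, and the dimension count $3n$ matches (with $n$ the number of exceptional curves). Closedness is properness: a sequence of ALE instantons whose periods converge to a generic limit cannot degenerate, since a collapsing cycle $\theta$ with $\theta\cdot\theta=-2$ would force $\theta$ to become orthogonal to all three limiting K\"ahler classes. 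Combined with surjectivity, already supplied by the quotient construction, and connectedness of the parameter space, this gives injectivity. An alternative, more hands-on route: fix one complex structure in the hyperk\"ahler family, identify the two underlying complex surfaces biholomorphically by a Torelli theorem for resolutions and deformations of rational double points, and then invoke uniqueness of the ALE Ricci-flat K\"ahler metric in a fixed K\"ahler class.

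The main obstacle is the properness in the uniqueness step: one must rule out bubbling or neck-pinching along a family of ALE instantons with bounded periods converging to a generic limit. This is precisely what (\ref{cond:generality condition}) is designed to prevent at the cohomological level --- a vanishing cycle would violate genericity of the limiting periods --- but upgrading this cohomological statement to the uniform geometric estimates (areas of the $(-2)$-cycles bounded below, injectivity radius controlled) needed to extract a non-degenerate limit is the technical heart of the argument, and it is exactly where the ALE control at infinity is indispensable.
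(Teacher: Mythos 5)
This theorem is not proved in the paper at all: it is quoted verbatim as Kronheimer's result with a citation to \cite{Kro1}\cite{Kro2}, so there is no in-paper argument to compare against. Your sketch is, however, a faithful outline of Kronheimer's original proof: existence via the hyperk\"ahler quotient of the $\Gamma$-equivariant McKay-quiver representation space by $\prod_i U(V_i)/U(1)$, with the genericity condition (\ref{cond:generality condition}) exactly guaranteeing freeness of the action on $\mu^{-1}(\bm{\zeta})$, the identification of the diffeomorphism type with the minimal resolution via connectedness of the good parameter set and the simultaneous-resolution picture, and the period computation. Two caveats. First, the $O(r^{-4})$ decay is not quite ``the ends are naturally identified, so only a compact region changes''; in \cite{Kro1} this is a genuinely technical estimate on the quotient construction near infinity, and your phrasing elides the hardest analytic step of the existence half. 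Second, your primary uniqueness route (an open--closed argument on the period map) is not Kronheimer's, and as you yourself note its properness step --- ruling out degeneration along a family with periods converging to a generic limit --- is a real gap that you do not close; the ``alternative, more hands-on route'' you mention at the end (rotate to a suitable complex structure, apply a Torelli-type statement for deformations/resolutions of the rational double point, then uniqueness of the ALE Ricci-flat K\"ahler metric in a fixed class) is essentially the argument of \cite{Kro2} and is the one you should promote to the main line. With that substitution the sketch is sound, and it is consistent with the source the paper relies on.
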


\begin{thm}\label{thm:surjectivity of the period mapping of ALE}
    Let  $(Y,g,I,J,K)$ be an ALE hyperk\"ahler gravitational instanton of type $\Gamma$. Then $Y$ is diffeomorphic to the minimal resolution of $\C^2/\Gamma$ and the triple of cohomology classes of its K\"ahler forms $([\omega_I], [\omega_J], [\omega_K])$ satisfies the condition (\ref{cond:generality condition}) (under the diffeomorphism).
\end{thm}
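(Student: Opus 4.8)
I would prove this as Kronheimer's Torelli-type classification \cite{Kro2}, splitting it into two essentially independent parts: first pin down the diffeomorphism type of $Y$, then verify that its period triple is generic. The organizing tool throughout is the twistor family $\{I_\tau\}_{\tau\in S^2}$ of complex structures, with $\omega_{I_\tau}=\tau_1\omega_I+\tau_2\omega_J+\tau_3\omega_K$, each making $(Y,I_\tau)$ a noncompact complex surface with trivial canonical bundle and holomorphic symplectic form $\varpi_{I_\tau}$ whose real and imaginary parts are again real combinations of $\omega_I,\omega_J,\omega_K$.

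\textbf{Diffeomorphism type.} The decay (\ref{cond:asymptotic of ALE at infinity}) forces the tangent cone at infinity to be a flat cone $\C^2/\Gamma$, and since all three K\"ahler forms are asymptotic to the standard ones on $\C^2=\mathbb{H}$, the group $\Gamma$ preserves the flat hyperk\"ahler structure; hence, after conjugation, $\Gamma\subset\mathrm{Sp}(1)=\mathrm{SU}(2)$ is a finite subgroup acting freely on $S^3$, i.e. one of the McKay ADE subgroups. Next I would show $Y$ is simply connected: $Y$ is the interior of a compact manifold with boundary $S^3/\Gamma$, so $\pi_1(Y)$ is finite, and its universal cover $\widehat Y$ is again a complete hyperk\"ahler ALE manifold, now with strictly smaller structure group at infinity; a Bishop--Gromov volume-rigidity argument ($\mathrm{Vol}\,\mathrm B_r(p)/r^{4}$ is monotone and has the same Euclidean limit as $r\to0$ and as $r\to\infty$ once that group is trivial, forcing it to be constant) makes the relevant cover flat, hence $\widehat Y=\C^2$ and $\pi_1(Y)=1$. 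Then $H_2(Y;\Z)$ is free of finite rank with a symmetric intersection pairing which is (a) negative definite, since a class of positive square would give a nonzero $L^2$ self-dual harmonic $2$-form, which on the Ricci-flat $Y$ is parallel by the Weitzenb\"ock formula and hence not $L^2$ unless zero, and (b) even, since $K_Y$ is trivial and adjunction forces $\theta\cdot\theta$ even on Poincar\'e duals of embedded surfaces. Comparing this even negative-definite lattice with the prescribed link $S^3/\Gamma$ (whose linking form is the discriminant form of the ADE root lattice of type $\Gamma$) identifies $H_2(Y;\Z)$ with that root lattice, of rank $n=\#\{$exceptional curves$\}$. Finally, fixing a complex structure $I$, the surface $(Y,I)$ has $K=\mathcal O$ and a single end biholomorphic near infinity to $(\C^2\setminus \mathrm B)/\Gamma$; by the classification of such surfaces it is a (partial) smoothing of the rational double point $\C^2/\Gamma$, and since $\mathrm{rk}\,H_2(Y)=n$ equals the Milnor number it is a full smoothing, so its underlying smooth $4$-manifold is the ADE plumbing of $T^*S^2$'s --- i.e. diffeomorphic to the minimal resolution of $\C^2/\Gamma$.

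\textbf{Genericity of the period.} Suppose, for contradiction, that some $\theta\in H_2(Y;\Z)$ has $\theta\cdot\theta=-2$ and $\langle\theta,\kappa_i\rangle=0$ for $i=1,2,3$, where $\kappa_1,\kappa_2,\kappa_3$ are the classes of $\omega_I,\omega_J,\omega_K$. Since for every $\tau\in S^2$ both $[\omega_{I_\tau}]$ and the real and imaginary parts of $[\varpi_{I_\tau}]$ lie in the span of $\kappa_1,\kappa_2,\kappa_3$, the class $\theta$ pairs to zero with the holomorphic $2$-form of every $I_\tau$, so $\theta$ is of type $(1,1)$ for every complex structure in the family. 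Fix $I$; by a Riemann--Roch--type effectivity statement on the ALE surface $(Y,I)$ with $K=\mathcal O$ --- morally
\begin{equation}
    h^0(\mathcal O_Y(\theta))+h^0(\mathcal O_Y(-\theta))\ \geq\ \chi(\mathcal O_Y(\theta))\ =\ 2+\tfrac{1}{2}\,\theta\cdot\theta\ =\ 1 ,
\end{equation}
made rigorous by working near the compact exceptional set --- and since every effective divisor on $Y$ is supported in that compact set, after replacing $\theta$ by $-\theta$ if necessary we may write $\theta=[\sum_j m_j C_j]$ with integers $m_j\geq0$ not all zero and $C_j$ compact holomorphic curves. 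But then $0=\langle\theta,[\omega_I]\rangle=\sum_j m_j\int_{C_j}\omega_I=\sum_j m_j\,\mathrm{Area}_g(C_j)>0$ by Wirtinger's inequality, a contradiction. (Alternatively, one can argue by degeneration: approximate $\bm{\kappa}$ by a generic triple, realize it by ALE instantons via Theorem \ref{thm:Hyperkahler structure on the minimal resolution of Kleinnian singularity}, and observe that as the approximating triples converge to $\bm{\kappa}$ the $(-2)$-curve dual to $\theta$ collapses, so the limit --- which should be $Y$ --- acquires an orbifold singularity, contradicting smoothness and completeness of $Y$.)

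\textbf{Expected obstacle.} I expect the crux to be the second half of the diffeomorphism-type part: upgrading ``the intersection form is even, negative definite, of rank $n$, and the end is $S^3/\Gamma$'' to ``$Y$ is diffeomorphic to the minimal resolution of $\C^2/\Gamma$'' is where the analysis (Weitzenb\"ock rigidity for Ricci-flat metrics and $L^2$-Hodge theory on ALE ends) and the complex-analytic geometry (recognizing $(Y,I)$ as a full smoothing of a simple surface singularity and invoking the classical diffeomorphism between Milnor fibre and minimal resolution) genuinely enter, beyond pure topology; identifying the lattice precisely as the ADE root lattice, rather than merely as some even negative-definite lattice of the right rank, is itself a nontrivial step. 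The genericity part is short once set up, but it quietly rests on the effectivity statement above, whose use on the noncompact $(Y,I)$ requires knowing a priori that effective curves are compactly supported.
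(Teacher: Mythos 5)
it is quoted as one of ``the main theorems of Kronheimer \cite{Kro1}\cite{Kro2}'' and used as a black box (only the orbifold variants are sketched in the appendix). So your proposal has to be measured against Kronheimer's original argument. In outline it reconstructs that argument faithfully, and the genericity half is essentially verbatim Kronheimer: if $\theta^2=-2$ and $\theta\perp\kappa_1,\kappa_2,\kappa_3$, then $\theta$ is of type $(1,1)$ for $I$, so $\pm\theta$ is represented by an effective compact curve, whose $\omega_I$-area is positive, contradicting $\langle\theta,\kappa_1\rangle=0$. That half is fine, modulo the point you yourself flag: Riemann--Roch effectivity on the noncompact $(Y,I)$ is not free, and in Kronheimer's treatment it is extracted from the prior identification of $(Y,I)$ with a partial resolution of a fibre of the semiuniversal deformation of $\C^2/\Gamma$, where the effective $(-2)$-classes are known explicitly.

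The diffeomorphism-type half has two genuine gaps. First, ``$Y$ is the interior of a compact manifold with boundary $S^3/\Gamma$, so $\pi_1(Y)$ is finite'' is a non sequitur: a compact $4$-manifold with boundary $S^3$ can have infinite fundamental group. Finiteness has to come from the Bishop--Gromov volume-growth comparison you invoke in the very next clause, so the logic should be reordered rather than repaired. Second, and more seriously, the step ``even, negative definite, with the linking form of $S^3/\Gamma$, hence the ADE root lattice of rank $n$'' is asserted without proof and is not a standard fact: signature plus discriminant form determine only the genus of an even lattice, not its isomorphism class, and you have not independently established that $\mathrm{rk}\,H_2(Y)=n$ --- indeed you later use that rank to conclude ``full smoothing,'' which is circular and also conflates smoothings with resolutions (the minimal resolution itself has $\mathrm{rk}\,H_2=n$ and is not a smoothing of $\C^2/\Gamma$). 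Kronheimer sidesteps all of this by arguing complex-analytically: for generic $\tau$ in the twistor sphere no $(-2)$-class is $(1,1)$, so $(Y,I_\tau)$ has no compact curves, is Stein, and is identified with a smooth fibre of the semiuniversal deformation of $\C^2/\Gamma$; the diffeomorphism with the minimal resolution then follows from Brieskorn--Tjurina simultaneous resolution, and the identification of $H_2$ with the root lattice comes for free. You correctly locate this as the crux, but the lattice-theoretic shortcut you propose in its place does not close it; the complex-analytic identification is needed both here and to justify the effectivity statement in the genericity half.
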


\begin{thm}\label{thm:torelli type theorem of ALE}
    Let $Y_1$ and $Y_2$ be ALE hyperk\"ahler gravitational instantons of type $\Gamma$.
    Assume there exists a diffeomorphism $Y_1 \to Y_2$ which preserves the triples of the cohomology classes of their K\"ahler forms.
    Then $Y_1$ and $Y_2$ are isomorphic as hyperk\"ahler manifolds.
    In particular, the hyperk\"ahler structures $(g_{\bm{\kappa}}, I_{\bm{\kappa}}, J_{\bm{\kappa}}, K_{\bm{\kappa}})$ and $(g_{\alpha({\bm{\kappa}})}, I_{\alpha(\bm{\kappa})}, J_{\alpha(\bm{\kappa})}, K_{\alpha(\bm{\kappa})})$, defined in Theorem \ref{thm:Hyperkahler structure on the minimal resolution of Kleinnian singularity}, are isomorphic for an isometry $\alpha \in \mathrm{O}(H^2(Y;\Z))$.
\end{thm}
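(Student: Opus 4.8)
The plan is to use hyperk\"ahler rotation to reduce the statement to the uniqueness of asymptotically locally Euclidean (ALE) Ricci-flat K\"ahler metrics on a fixed complex surface. By Theorem \ref{thm:surjectivity of the period mapping of ALE} each $Y_k$ ($k=1,2$) is the minimal resolution of a common $\C^2/\Gamma$ and its triple of K\"ahler classes satisfies the generality condition (\ref{cond:generality condition}); write $F:Y_1\to Y_2$ for the given diffeomorphism, so that $F^*$ carries $[\omega_{I_2}],[\omega_{J_2}],[\omega_{K_2}]$ to $[\omega_{I_1}],[\omega_{J_1}],[\omega_{K_1}]$. For $\tau=(a,b,c)\in S^2$ put $I_{k,\tau}=aI_k+bJ_k+cK_k$; this is an integrable complex structure making $(Y_k,g_k)$ K\"ahler, with K\"ahler form $\omega_{k,\tau}=a\omega_{I_k}+b\omega_{J_k}+c\omega_{K_k}$ and a holomorphic symplectic form $\Omega_{k,\tau}$ whose class is a complex-linear combination of $[\omega_{I_k}],[\omega_{J_k}],[\omega_{K_k}]$. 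For $\theta\in H_2(Y_k;\Z)$ with $\theta\cdot\theta=-2$, the vector $v_\theta:=(\langle\theta,[\omega_{I_k}]\rangle,\langle\theta,[\omega_{J_k}]\rangle,\langle\theta,[\omega_{K_k}]\rangle)$ is nonzero by (\ref{cond:generality condition}), and $\theta$ is representable by a compact holomorphic curve in $(Y_k,I_{k,\tau})$ only if $\tau=\pm v_\theta/|v_\theta|$. Since $F_*$ permutes $(-2)$-classes and matches their periods, the two ``bad sets'' in $S^2$ coincide and are countable; fixing a generic $\tau$ outside this set, neither $(Y_1,I_{1,\tau})$ nor $(Y_2,I_{2,\tau})$ contains a compact complex curve.

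The next step is to identify these complex surfaces. An ALE hyperk\"ahler $4$-manifold of type $\Gamma$ carrying a complex structure with no compact curves and canonical bundle trivialised by the nowhere-vanishing holomorphic $2$-form $\Omega_{k,\tau}$ is biholomorphic to an affine surface, a fibre of the semiuniversal deformation of the Kleinian singularity $\C^2/\Gamma$; by the Brieskorn--Grothendieck description of that deformation and the simultaneous resolution theorem, such a surface together with its holomorphic symplectic form is determined up to isomorphism by the class $[\Omega_{k,\tau}]\in H^2(Y_k;\C)$. As $F^*[\Omega_{2,\tau}]=[\Omega_{1,\tau}]$, there is a biholomorphism $G:(Y_1,I_{1,\tau})\to(Y_2,I_{2,\tau})$ with $G^*\Omega_{2,\tau}$ proportional to $\Omega_{1,\tau}$. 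A priori $G^*$ and $F^*$ differ on $H^2$, but $\phi:=(F^*)^{-1}\circ G^*$ is a finite-order isometry of the root lattice $H^2(Y_2;\Z)$ fixing, up to a scalar, the line $\C\cdot[\Omega_{2,\tau}]$; genericity of $\tau$ puts this line in no complexified root hyperplane, so the only element of the relevant monodromy group fixing it is the identity (the residual sign ambiguity is excluded since a biholomorphism of ALE surfaces is asymptotically linear at infinity and hence pulls K\"ahler forms back to forms with the correct, positive, asymptotics). Thus $G^*=F^*$ on $H^2$; in particular $G^*[\omega_{2,\tau}]=[\omega_{1,\tau}]$ and $G^*$ matches the classes of $\omega_{I_k},\omega_{J_k},\omega_{K_k}$.

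Now work on the single complex surface $(Y_1,I_{1,\tau})\cong(Y_2,I_{2,\tau})$: both $g_1$ and $G^*g_2$ are ALE Ricci-flat K\"ahler metrics asymptotic to the flat model on $\C^2/\Gamma$, with K\"ahler forms in the common class $[\omega_{1,\tau}]$. By the uniqueness part of the ALE Calabi--Yau theorem in complex dimension two --- writing $G^*\omega_{2,\tau}=\omega_{1,\tau}+i\partial\bar\partial\varphi$ with $\varphi$ decaying at infinity, integration by parts on the Monge--Amp\`ere equation forces $\varphi\equiv 0$, exactly as in the compact uniqueness argument --- we obtain $g_1=G^*g_2$, so $G$ is an isometry. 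An isometry carries the twistor sphere of $Y_1$ onto that of $Y_2$; since $G$ already intertwines $I_{1,\tau}$ with $I_{2,\tau}$ and matches the cohomology classes of $\omega_{I_k},\omega_{J_k},\omega_{K_k}$, this identification of twistor spheres must send $(I_1,J_1,K_1)$ to $(I_2,J_2,K_2)$, so $G$ is an isomorphism of hyperk\"ahler manifolds $Y_1\to Y_2$. Taking $Y_2=Y_{\bm{\kappa}}$, the model of Theorem \ref{thm:Hyperkahler structure on the minimal resolution of Kleinnian singularity} with $\bm{\kappa}$ the common triple of K\"ahler classes, yields the final assertion.

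I expect the main obstacle to be the second paragraph: recognising the generic hyperk\"ahler rotation as a concrete affine deformation of $\C^2/\Gamma$ and establishing a Torelli statement for these affine surfaces --- where the McKay correspondence, Brieskorn's study of the simultaneous resolution, and control of biholomorphisms near infinity all enter --- together with the existence and uniqueness theory for ALE Ricci-flat K\"ahler metrics at the sharp $O(r^{-4})$ rate in the definition of an ALE gravitational instanton. An alternative, and in fact Kronheimer's own, strategy sidesteps part of this by constructing each $Y_{\bm{\kappa}}$ explicitly as a finite-dimensional hyperk\"ahler quotient built from the McKay quiver of $\Gamma$ and then showing directly that every ALE instanton of type $\Gamma$ arises as such a quotient, with $\bm{\kappa}$ recorded by the value of the hyperk\"ahler moment map.
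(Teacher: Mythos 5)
The paper gives no proof of this statement: it is quoted as Kronheimer's Torelli theorem, with the proof delegated to \cite{Kro2} (the appendix only records that the argument extends to orbifolds), and your argument is essentially a faithful reconstruction of that proof --- hyperk\"ahler rotation to a generic complex structure admitting no compact curves, identification of the resulting affine surface with a fibre of the semiuniversal deformation of $\C^2/\Gamma$ via the period of its holomorphic symplectic form, uniqueness of the decaying solution of the ALE Monge--Amp\`ere equation in a fixed K\"ahler class, and recovery of the full twistor sphere from the resulting isometry (where, as you implicitly use, genericity of $\tau$ again forces the residual rotation about $\tau$ to be trivial). One small correction: the hyperk\"ahler-quotient/McKay-quiver construction you mention at the end is Kronheimer's \emph{existence} proof in \cite{Kro1}, while the uniqueness proof in \cite{Kro2} is precisely the rotation argument you wrote out, so it is not really an alternative route to this particular statement.
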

Here a \textit{hyperk\"ahler structure} on $Y$ means a quadruple $(g,I,J,K)$ and an \textit{isomorphism of hyperk\"ahler structures} is an isometry $(Y_1,g_1) \to (Y_2,g_2)$ which preserves all complex structures (hence, in particular, it preserves K\"ahler forms).

The above theorems can be generalized for orbifolds and play an important role in the proof of the main theorem.
For simplicity, $Y$ is regarded as a complex manifold by the complex structure $I$ and assume $\kappa_1 =0$, which is equivalent to assume $Y$ is affine (see Lemma 3.9. \cite{Kro1}), in the rest of this section.

\begin{dfn}
    An affine ALE gravitational instanton with ADE singularities is an orbifold hyperk\"ahler $4$-manifold $Y = (Y,g,I,J,K)$ such that 
    \begin{itemize}
        \item $(Y,I)$ is an affine variety at worst ADE-type singularities such that its minimal resolution is a deformation $\C^2/\Gamma$ and 
        \item there is a coordinate at infinity with the asymptotics (\ref{cond:asymptotic of ALE at infinity}).
        
    \end{itemize}
        
\end{dfn}
Then the following hold.
\begin{thm}\label{thm:Hyperkahler structure on deformations on Kleinian singularity}
   Fix the underlying differentiable manifold of $\widetilde{\C^2/\Gamma}$, say $Y$.
   For a cohomology class $\kappa \in H^2(Y;\C)$, let $R(\kappa) \subset H^2(Y;\Z)$ be the set of roots annihilated by $\kappa$:
   \begin{equation}
       R(\kappa) :=\{\theta \in H^2(Y,\Z)  \mid \theta^2 = -2 , \langle \kappa, \theta \rangle =0 \}.
   \end{equation}
   Then there exists a unique affine ALE gravitational instanton $Y_{\kappa}=(Y_{\kappa}, g_{\kappa}, I_{\kappa}, J_{\kappa}, K_{\kappa})$ 
   such that
   \begin{itemize}
       \item $[\rho^*(\omega_{J_\kappa} + \sqrt{-1} \omega_{K_\kappa}) ] = \kappa$ where $\rho : Y \to Y_{\kappa}$ is the minimal resolution and 
       \item $R(\kappa) = \{\theta \in \mathrm{Span}_{\Z} \{[E_j ] \}_{j=1}^n \mid \theta^2 = -2 \}$ where $E= \bigcup_{j=1}^n E_j$ is the exceptional divisors of the resolution.
   \end{itemize}
\end{thm}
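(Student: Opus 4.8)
The plan is to deduce this orbifold statement from Kronheimer's smooth theory recalled above (Theorems~\ref{thm:Hyperkahler structure on the minimal resolution of Kleinnian singularity}, \ref{thm:surjectivity of the period mapping of ALE}, \ref{thm:torelli type theorem of ALE}) by a controlled degeneration in which the $(-2)$-curves representing the roots of $R(\kappa)$ are shrunk to ADE points; an alternative, closer to \cite{Kro1}, \cite{Kro2}, would be to run Kronheimer's hyperk\"ahler quotient construction at a non-generic level of the hyperk\"ahler moment map, but the degeneration uses the stated theorems most directly. Recall that for the minimal resolution $Y$ of $\C^2/\Gamma$ the intersection form identifies $H^2(Y;\R)$ with $H^2_c(Y;\R)$, so $\langle\,\cdot\,,\,\cdot\,\rangle$ is negative definite, $H^2(Y;\C)\cong\h_\C$ is the complexified root lattice, and, writing $\kappa=\kappa_2+i\kappa_3$ with $\kappa_2,\kappa_3\in H^2(Y;\R)$, the set $R(\kappa)=\{\theta:\theta^2=-2,\ \langle\theta,\kappa_2\rangle=\langle\theta,\kappa_3\rangle=0\}$ is a finite sub-root system. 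Write $X_\kappa$ for the affine deformation of $\C^2/\Gamma$ attached to the point $\kappa\in\h_\C$ under the semiuniversal deformation and $\widetilde X_\kappa\to X_\kappa$ for its minimal resolution; $\widetilde X_\kappa$ is diffeomorphic to $Y$ and its exceptional divisor spans $\mathrm{Span}_\Z R(\kappa)$.

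\textbf{Existence.} I would fix $\kappa_1\in H^2(Y;\R)$ positive on every component of the exceptional divisor of $\widetilde X_\kappa\to X_\kappa$; then $\kappa_1$ is orthogonal to no root of $R(\kappa)$. For $s\in(0,1]$ put $\bm\kappa_s=(s\kappa_1,\kappa_2,\kappa_3)$. A $(-2)$-class orthogonal to all three components of $\bm\kappa_s$ would be orthogonal to $\kappa_2$ and $\kappa_3$, hence lie in $R(\kappa)$, hence be non-orthogonal to $s\kappa_1$ — a contradiction; so $\bm\kappa_s$ satisfies the generality condition (\ref{cond:generality condition}), and Theorem~\ref{thm:Hyperkahler structure on the minimal resolution of Kleinnian singularity} gives smooth ALE instantons $Y_s=(Y,g_s,I_s,J_s,K_s)$ with $([\omega_{I_s}],[\omega_{J_s}],[\omega_{K_s}])=\bm\kappa_s$. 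Then $\Omega_s:=\omega_{J_s}+i\omega_{K_s}$ has period $\kappa$ for every $s$ and satisfies $\omega_{I_s}^2=\frac12\Omega_s\wedge\overline{\Omega}_s$ by the hyperk\"ahler identities, and $(Y,I_s)\cong\widetilde X_\kappa$ independently of $s$. Letting $s\downarrow0$, the $\omega_{I_s}$-areas of the exceptional curves of $\widetilde X_\kappa\to X_\kappa$ tend to $0$ while $\int_Y|\mathrm{Rm}_{g_s}|^2$ and the ALE volume growth stay fixed topological constants; the ALE compactness theory of \cite{Anderson}, \cite{BKN}, \cite{Bando}, together with the continuity of Kronheimer's construction in its parameter \cite{Kro1}, then yields pointed convergence of $(Y,g_s)$ — smoothly away from finitely many points — to a complete hyperk\"ahler ALE orbifold $Y_\kappa$ of type $\Gamma$, with $C^\infty_{\mathrm{loc}}$ convergence of complex structures identifying its underlying complex space with $X_\kappa$. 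One then checks that $X_\kappa$ is affine (no compact curve survives the blow-down), that its singular points are exactly the ADE points of type $R(\kappa)$ so that $\rho:=(\widetilde X_\kappa\to X_\kappa)$ is its minimal resolution and $R(\kappa)=\{\theta\in\mathrm{Span}_\Z\{[E_j]\}:\theta^2=-2\}$, and that the limit $\Omega_\kappa$ of the $\Omega_s$ satisfies $[\rho^*\Omega_\kappa]=\kappa$ and $\omega_\kappa^2=\frac12\Omega_\kappa\wedge\overline{\Omega}_\kappa$.

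\textbf{Uniqueness.} Let $(Y',g',\Omega')$ be another affine ALE instanton realizing these data, with minimal resolution $\rho':Y\to Y'$. Since $(Y',I')$ is an affine deformation of $\C^2/\Gamma$ with ADE singularities of type $R(\kappa)$ whose minimal resolution is $Y$, and since the period of the resolved holomorphic $2$-form recovers the deformation parameter in $\h_\C$ (classical; cf.\ \cite{Kro2}), the equality $[\rho'^*\Omega']=\kappa$ forces a biholomorphism $(Y',I')\cong X_\kappa$. It remains to show that the ALE Calabi--Yau metric on $X_\kappa$ with vanishing K\"ahler class and with holomorphic volume form normalized by $\omega'^2=\frac12\Omega'\wedge\overline{\Omega}'$ is unique: perturbing the K\"ahler class of $\rho'^*g'$ on $Y$ by $s\kappa_1$ and applying the uniqueness in Theorem~\ref{thm:Hyperkahler structure on the minimal resolution of Kleinnian singularity} together with the Torelli Theorem~\ref{thm:torelli type theorem of ALE} reproduces precisely the family $Y_s$ above, and by the same convergence $(Y',g')$ is the $s\downarrow0$ limit of $Y_s$; hence $Y'\cong Y_\kappa$ as hyperk\"ahler orbifolds. (Equivalently, one first establishes the orbifold analogue of Theorem~\ref{thm:torelli type theorem of ALE} by this degeneration and then applies it.)

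\textbf{Main obstacle.} The technical heart is the degeneration analysis as $s\downarrow0$: obtaining uniform geometric control on the smooth Kronheimer metrics $g_s$ as $\bm\kappa_s$ reaches the wall, and identifying the limiting orbifold — both the emergence of precisely the $R(\kappa)$-singularities and the blow-down of the complex structure to $X_\kappa$. This is the ALE counterpart of the degeneration analysis for polarized K3 surfaces of Kobayashi--Todorov \cite{KT} (and of the multi-Eguchi--Hanson case of de Borbon--Spotti \cite{dBS}); I expect it to follow from a mild extension of Kronheimer's study of the dependence of the ALE hyperk\"ahler metric on its parameters \cite{Kro1}, \cite{Kro2}, combined with the ALE orbifold compactness results of \cite{Anderson}, \cite{BKN}, \cite{Bando} already invoked for bubbling limits in the introduction.
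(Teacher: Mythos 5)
Your route is genuinely different from the paper's. The paper does not degenerate smooth Kronheimer instantons at all: it observes (Proposition \ref{prop:appendix 1} and its proof) that Kronheimer's hyperk\"ahler quotient construction, run at a non-generic level of the moment map lying on the discriminant loci --- see the proof of Lemma 3.3 in \cite{Kro1} --- already produces the orbifold ALE spaces directly, so existence is read off from \cite{Kro1} with no limiting argument, and uniqueness is obtained by noting that the twistor-theoretic Torelli proof of \cite{Kro2} applies verbatim to orbifolds (Proposition \ref{prop:appendix 5}). Your degeneration $\bm{\kappa}_s=(s\kappa_1,\kappa_2,\kappa_3)$ with $s\downarrow 0$ is a plausible alternative for existence --- the verification of condition (\ref{cond:generality condition}) for $s>0$ and the identity $[\Omega_s]=\kappa$ are correct --- but it front-loads precisely the compactness and limit-identification analysis that the quotient construction renders unnecessary, and you rightly flag that this is where all the work lies.

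The genuine gap is in your uniqueness argument. Given a second affine ALE instanton $(Y',g',\Omega')$ with the same period, you want to exhibit $(Y',g')$ as the $s\downarrow 0$ limit of the \emph{same} smooth family $Y_s$ by ``perturbing the K\"ahler class of $\rho'^*g'$ by $s\kappa_1$.'' But $\rho'^*g'$ is a degenerate pulled-back metric on $Y$, vanishing on the exceptional curves; producing smooth ALE hyperk\"ahler metrics in the classes $\bm{\kappa}_s$ that converge back to $(Y',g')$ as $s\downarrow 0$ is the ``metrics on the crepant resolution converge to the orbifold metric'' direction, which is not supplied by Theorems \ref{thm:Hyperkahler structure on the minimal resolution of Kleinnian singularity}--\ref{thm:torelli type theorem of ALE} and is logically distinct from the degeneration used for existence: the latter only shows that \emph{some} orbifold limit with period $\kappa$ exists, not that \emph{every} affine ALE instanton with period $\kappa$ arises as such a limit. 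Without this step you have not excluded two non-isometric orbifold metrics sharing the same period. Either carry out that perturbation/gluing construction in detail, or follow the paper and invoke the orbifold validity of Kronheimer's Torelli argument from \cite{Kro2} directly.
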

\begin{thm}\label{thm:orbifold surjectivity of torelli of ALE}
    Let $(Y, g, I,J,K)$ be an affine ALE gravitational instanton.
   Consider the nowhere vanishing holomorphic 2 form $\Omega := \omega_J + \sqrt{-1} \omega_K$.
    Then there exists a cohomology class $\kappa \in H^2(\widetilde{\C^2/\Gamma}; \Z)$ such that $Y\cong Y_{\kappa}$ as K\"ahler surfaces and $\Omega$ coincides with $\Omega_{\kappa} = \omega_{J_\kappa} + \sqrt{-1} \omega_{K_\kappa}$ under the isomorphism.
    \end{thm}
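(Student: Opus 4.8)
The plan is to reduce the statement to the smooth ALE theory (Theorems~\ref{thm:Hyperkahler structure on the minimal resolution of Kleinnian singularity}, \ref{thm:surjectivity of the period mapping of ALE} and \ref{thm:torelli type theorem of ALE}) together with the orbifold existence result Theorem~\ref{thm:Hyperkahler structure on deformations on Kleinian singularity}. Let $\rho\colon\widetilde Y\to Y$ be the minimal resolution. Since the singularities of $Y$ are du Val, $\rho$ is crepant, so $\rho^*\Omega$ extends across the exceptional locus to a nowhere-vanishing holomorphic $2$-form $\widetilde\Omega$ on $\widetilde Y$, and $\widetilde Y$ is diffeomorphic to the minimal resolution $\widetilde{\C^2/\Gamma}$ of the ambient Kleinian singularity. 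Fixing such a diffeomorphism, put $\kappa:=[\widetilde\Omega]\in H^2(\widetilde{\C^2/\Gamma};\C)$. The assertion becomes: $Y$ is isomorphic, compatibly with the holomorphic $2$-forms, to the affine instanton $Y_\kappa$ supplied by Theorem~\ref{thm:Hyperkahler structure on deformations on Kleinian singularity}.

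The first point is that $Y$ already has the singularity type that $\kappa$ prescribes, i.e.\ $R(\kappa)=\{\theta\in\mathrm{Span}_\Z\{[E_1],\dots,[E_n]\}\mid\theta^2=-2\}$, where $E_1,\dots,E_n$ are the components of the exceptional locus of $\rho$; without this identity $Y_\kappa$ and $Y$ could not be isomorphic, so it is part of what must be proved. The inclusion $\supseteq$ is a one-line computation: for $\theta=\sum_j a_j[E_j]$ one has $\langle\kappa,\theta\rangle=\sum_j a_j\int_{E_j}\rho^*\Omega=\sum_j a_j\int_{\rho_*[E_j]}\Omega=0$ since $\rho_*[E_j]=0$. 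For $\subseteq$, if $\theta^2=-2$ and $\langle\kappa,\theta\rangle=0$ then $\theta$ pairs trivially with the holomorphic $2$-form, hence is of type $(1,1)$ on $\widetilde Y$; a Riemann--Roch / index argument on the ALE surface $\widetilde Y$ shows that $\theta$ or $-\theta$ is effective, and since $Y$ is affine every compact curve in $\widetilde Y$ is $\rho$-exceptional, so $\theta\in\mathrm{Span}_\Z\{[E_j]\}$. (Because $\widetilde Y$ is non-compact one must use the $L^2$ / Atiyah--Patodi--Singer version of Riemann--Roch here, as in \cite{Kro1}.) Granting this, Theorem~\ref{thm:Hyperkahler structure on deformations on Kleinian singularity} yields $Y_\kappa$ with $[\rho^*\Omega_\kappa]=\kappa$, with the same root set, and in particular with the same diffeomorphism type of minimal resolution as $Y$.

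It remains to identify $Y$ with $Y_\kappa$, and I would do this by deforming both to a common smooth ALE instanton. Choose a generic Kähler class $\kappa_1\in H^2(\widetilde{\C^2/\Gamma};\R)$, meaning $\langle\kappa_1,\theta\rangle\neq0$ for all $(-2)$-classes $\theta$; for $s>0$ let $\widetilde Y_s$ be the smooth ALE instanton with period triple $(s\kappa_1,\mathrm{Re}\,\kappa,\mathrm{Im}\,\kappa)$, which exists and is unique by Theorems~\ref{thm:Hyperkahler structure on the minimal resolution of Kleinnian singularity} and \ref{thm:torelli type theorem of ALE}. As $s\to0$ the exceptional curves of $\widetilde Y_s$ collapse and, by the continuity of Kronheimer's hyperkähler quotient construction in its level parameter, $\widetilde Y_s$ converges in the pointed Gromov--Hausdorff sense to the affine instanton with period $(0,\mathrm{Re}\,\kappa,\mathrm{Im}\,\kappa)$, which is $Y_\kappa$ by construction. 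Conversely, any affine ALE instanton with holomorphic $2$-form of class $\kappa$ --- in particular $Y$ --- admits such a resolving deformation: one glues a suitably rescaled Kronheimer ALE piece into a neighbourhood of each ADE point and corrects the approximate hyperkähler triple by a gluing/perturbation argument of Joyce--Donaldson type, and by uniqueness of the smooth model this deformation is again $\{\widetilde Y_s\}$. Since a Gromov--Hausdorff limit is unique, $Y\cong Y_\kappa$; and because the holomorphic $2$-form keeps the class $\kappa$ along the deformation and converges in $C^\infty_{\mathrm{loc}}$ on the regular locus, the isomorphism carries $\Omega$ to $\Omega_\kappa$.

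The main obstacle is this last step: showing rigorously that every affine ALE instanton with the prescribed complex periods admits the resolving deformation, and that this family degenerates back to the orbifold as $s\to0$ with enough control to match all three symplectic forms and the complex structure, not merely the Riemannian distance function. Both ingredients --- the gluing construction of resolutions and the compactness-and-uniqueness of their bubble limits --- are by now standard in the ALE literature (\cite{Kro1}, \cite{Kro2}, \cite{BKN}), but the orbifold bookkeeping needs attention. An alternative that sidesteps the gluing is to use the presentation of every affine ALE instanton of type $\Gamma$ as a hyperkähler quotient at a (possibly non-generic) level, which is part of Kronheimer's analysis; in that language the proof reduces to recovering the level, up to the Weyl group, from the pair $(\kappa,R(\kappa))$, exactly as in Theorem~\ref{thm:Hyperkahler structure on deformations on Kleinian singularity}.
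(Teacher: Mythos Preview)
Your opening moves --- taking the minimal resolution, pulling back $\Omega$, and setting $\kappa=[\widetilde\Omega]$ --- match the paper's, and your verification that $R(\kappa)$ coincides with the lattice spanned by the exceptional classes is correct and more explicit than anything the paper writes down.

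Where you diverge is in the identification of $Y$ with $Y_\kappa$. The paper's route (Proposition~\ref{prop:appendix2}) is much shorter and does not pass through any deformation to smooth instantons. It simply observes that the proof of the Torelli theorem in \cite{Kro2} is ``completely valid for orbifolds'' (this is Proposition~\ref{prop:appendix 5}): once $Y_\kappa$ has been produced by Kronheimer's hyperk\"ahler quotient at the non-generic level $\kappa$ (Proposition~\ref{prop:appendix 1}), and once the periods of $Y$ and $Y_\kappa$ agree, the orbifold Torelli statement gives the isomorphism directly. The only additional input is Bando's theorem \cite{Bando} that the minimal resolution of an ALE Ricci-flat orbifold carries a smooth ALE hyperk\"ahler structure, which together with Kronheimer's smooth classification pins down the diffeomorphism type of $\widetilde Y$.

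Your deformation-and-matching argument has a real gap precisely where you locate it. The gluing step --- producing from the \emph{given} $Y$ a family of smooth ALE instantons with periods exactly $(s\kappa_1,\mathrm{Re}\,\kappa,\mathrm{Im}\,\kappa)$ --- is not contained in \cite{Kro1}, \cite{Kro2} or \cite{BKN}; a Joyce--Donaldson gluing yields periods only approximately of this form, and adjusting them exactly is extra work. Even granting that, passing from the smooth identification $\widetilde Y_s\cong\widetilde Y_s$ to the orbifold identification $Y\cong Y_\kappa$ requires uniform control of the isometries as $s\to0$, which you have not supplied. Your alternative at the end --- presenting $Y$ as a hyperk\"ahler quotient at a non-generic level --- is essentially the paper's approach in disguise, but the assertion that this presentation ``is part of Kronheimer's analysis'' is exactly the surjectivity statement you are trying to prove, so it cannot be invoked without argument.
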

    \begin{thm}\label{thm:orbifold injectivity of period of ALE}
    Let $Y_1$ and $Y_2$ be affine ALE gravitational instantons.
    Assume $\kappa_1$ and $\kappa_2$ corresponds to $Y_1$ and $Y_2$ respectively, i.e. $Y_j \cong Y_{\kappa_j}$.
    If there are a constant $c \in \C\setminus\{0\}$ and an automorphism $\alpha \in \mathrm{O}(H^2(\widetilde{\C^2/\Gamma};\Z))$ so that $\kappa_1 = c \alpha(\kappa_2)$ then $Y_1 \cong Y_2$.
    \end{thm}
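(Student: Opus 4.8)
The plan is to reduce the assertion to the uniqueness statement in Theorem~\ref{thm:Hyperkahler structure on deformations on Kleinian singularity}, in two steps: first removing the scalar $c$ by rescaling the metric and rotating the twistor family, then absorbing the lattice automorphism $\alpha$ into a change of the resolution marking.

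\emph{Step 1 (reduction to $c=1$).} Starting from $Y_2\cong Y_{\kappa_2}$ with complex structure $I$, K\"ahler form $\omega$ and holomorphic $2$-form $\Omega$ normalised by $\omega^2=\tfrac12\Omega\wedge\overline{\Omega}$, I would scale the metric by $|c|$ and rotate $(\omega_J,\omega_K)$ by $\arg c$ inside the twistor sphere while keeping $I$ fixed. The result is again an affine ALE orbifold on the same underlying complex surface --- a constant homothety satisfies the decay~(\ref{cond:asymptotic of ALE at infinity}) after rescaling the Euclidean coordinate at infinity, and a constant twistor rotation preserves the asymptotics --- with K\"ahler form $|c|\omega$ and holomorphic $2$-form $c\,\Omega$, so the normalisation persists and the period class becomes $c\kappa_2$. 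Since orthogonality to a class is scale-invariant we have $R(c\kappa_2)=R(\kappa_2)$, and since $I$ is untouched the minimal resolution and its exceptional curves are unchanged; hence this orbifold satisfies both defining properties of $Y_{c\kappa_2}$ and, by uniqueness in Theorem~\ref{thm:Hyperkahler structure on deformations on Kleinian singularity}, equals $Y_{c\kappa_2}$. As the isomorphism in the statement is only up to homothety and twistor rotation, this gives $Y_2\cong Y_{c\kappa_2}$, so replacing $\kappa_2$ by $c\kappa_2$ we may assume $c=1$, i.e.\ $\kappa_1=\alpha(\kappa_2)$.

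\emph{Step 2 (absorbing $\alpha$).} Here I would use that every isometry of the ADE root lattice $H^2(\widetilde{\C^2/\Gamma};\Z)$ is induced by a diffeomorphism of $\widetilde{\C^2/\Gamma}$ --- a fact implicit in Kronheimer's work \cite{Kro1}\cite{Kro2}, already invoked for the ``in particular'' clause of Theorem~\ref{thm:torelli type theorem of ALE} --- and choose $\Phi\in\mathrm{Diff}(\widetilde{\C^2/\Gamma})$ with $\Phi^*=\alpha$ on $H^2$. Let $\rho:Y\to Y_{\kappa_2}$ be the minimal resolution provided by Theorem~\ref{thm:Hyperkahler structure on deformations on Kleinian singularity}, so that $[\rho^*\Omega_{\kappa_2}]=\kappa_2$ and the set of $(-2)$-vectors of $\mathrm{Span}_{\Z}\{[E_j]\}$ is $R(\kappa_2)$. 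Then $\rho\circ\Phi:Y\to Y_{\kappa_2}$ is again a minimal resolution from the model $Y$ (its exceptional $(-2)$-curves are the $\Phi$-preimages of the $E_j$), and $[(\rho\circ\Phi)^*\Omega_{\kappa_2}]=\Phi^*\kappa_2=\alpha(\kappa_2)=\kappa_1$, while the set of $(-2)$-vectors of the span of its exceptional classes is $\alpha(R(\kappa_2))=R(\alpha(\kappa_2))=R(\kappa_1)$ (using that $\alpha$ preserves the form). Thus $(Y_{\kappa_2},g_{\kappa_2},\Omega_{\kappa_2})$ equipped with the marking $\rho\circ\Phi$ satisfies both defining properties of $Y_{\kappa_1}$, and by uniqueness in Theorem~\ref{thm:Hyperkahler structure on deformations on Kleinian singularity} it is isomorphic to $Y_{\kappa_1}$ compatibly with the holomorphic $2$-forms. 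Combining the two steps, $Y_1\cong Y_{\kappa_1}\cong Y_{\kappa_2}\cong Y_2$.

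The main obstacle is the topological input of Step 2, namely the surjectivity of $\mathrm{Diff}(\widetilde{\C^2/\Gamma})\to\mathrm{O}(H^2(\widetilde{\C^2/\Gamma};\Z))$: the Weyl group comes from the monodromy of the semiuniversal deformation of $\C^2/\Gamma$, and $\mathrm{O}$ of an ADE root lattice is the semidirect product of the Weyl group with the Dynkin-diagram symmetries, so one must also realise those symmetries by diffeomorphisms, which in the $D_n$ and $E_6$ cases needs some care. An alternative, more analytic, route would be to perturb $\kappa_2$ to a generic nearby class $\kappa_2^{(s)}$ --- so that no root is orthogonal to it and $Y_{\kappa_2^{(s)}}$ is smooth --- apply the smooth Torelli Theorem~\ref{thm:torelli type theorem of ALE} to $\kappa_2^{(s)}$ and $\alpha(\kappa_2^{(s)})$, and let $s\to 0$, using that $Y_{\kappa^{(s)}}$ converges to $Y_\kappa$ in the pointed Gromov--Hausdorff sense together with $C^\infty_{\mathrm{loc}}$ convergence of complex structures off the exceptional locus, so that a subsequential limit of the isometries is the desired isomorphism; there the difficulty shifts to the uniform control of ALE instantons as the period approaches a wall.
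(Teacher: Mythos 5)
Your proof is correct and follows essentially the same route as the paper's own argument (Proposition \ref{prop:appendix 5} in Appendix A): the scalar $c$ is absorbed by the natural $\C^*$ action (homothety plus twistor rotation, which does not change the holomorphic structure), and the isometry $\alpha$ is handled by the orbifold version of Kronheimer's Torelli/uniqueness statement. Your Step 2 merely makes explicit the one input the paper leaves implicit, namely that every isometry of the ADE lattice $H^2(\widetilde{\C^2/\Gamma};\Z)$ is realized by a diffeomorphism — the same fact underlying the ``in particular'' clause of Theorem \ref{thm:torelli type theorem of ALE}.
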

    \begin{rmk}
        These theorems are essentially shown in \cite{Kro1}\cite{Kro2} (or straight forward generalizations of the original proofs).
        Hence the author believes that they are well-known to the experts.
        However he can not find their proofs in the literature. 
        So we give some explanations about the theorems and sketches of proofs in Appendix A for reader's convenience.
    \end{rmk}
    
\subsubsection{Period theory of ALE hyperk\"ahler gravitational instantons}
\begin{dfn}[Period mapping of ALE hyperk\"ahler gravitational instantons]
    Let $Y = (Y,g,I,J,K)$ be an ALE hyperk\"ahler gravitational instanton of type $\Gamma$.
    A marking of $Y$ is an isomorphism $\alpha$ of lattices
    \begin{equation}
        \alpha : H^2(Y;\Z) \to \h_\Z,
    \end{equation}
    where $\h_\Z$ is the root lattice of a Cartan sub-algebra of the complex simple Lie algebra corresponding of type $\Gamma$.
    A marked ALE hyperk\"ahler gravitational instanton is a pair of ALE hyperk\"ahler gravitational instanton and its marking.
    An isomorphism of marked ALE hyperk\"ahler gravitational instantons is an isomorphism of hyperk\"ahler manifolds compatible with the markings.
\end{dfn}

\begin{dfn}
    Let $Y$ be an affine ALE gravitational instanton of type $\Gamma$.
    Let $\widetilde{Y}$ be the minimal resolution.
    Define a sub-space $IH^2(Y) \subset H^2 (\widetilde{Y};\Z)$ by 
    \begin{equation}
        IH^2(Y) := \{ x \in H^2(\widetilde{Y};\Z) \mid \langle x,\theta  \rangle =0 , \forall \theta \in R(Y) \},
    \end{equation}
    where 
    \begin{equation}
        R(Y) = \left\{ \theta = \sum_{j=1}^n c_j [E_j] \in H^2( \widetilde{Y}; \Z) \ \middle| \ \theta^2 =-2 \right\}
    \end{equation}
    for the exceptional divisors $E_j \subset \widetilde{Y}$ of the resolution.
    Then a marking $\alpha$ of $Y$ is an embedding $IH^2(Y) \hookrightarrow \h_\Z$ which is a restriction of an isomorphism $\tilde{\alpha} : H^2(\widetilde{Y}; \Z) \to \h_\Z$.
    A marked affine ALE gravitational instanton and its morphism are defined in a similar manner as above.
\end{dfn}
The theorems in the previous theorems implies that
\begin{equation}
    \h_\C^\circ := \h_\C \setminus \bigcup_{\theta^2 =-2} H_{\theta}, 
\end{equation}
where $H_\theta = \theta^\perp \subset \h_\C$, parametrizes the isomorphism classes of marked ALE gravitational instantons and $\h_\C$ does the isomorphism classes of marked affine ALE gravitational instantons. 
Next, the completion of $\h_\C^\circ$ along $H_\theta$ is presented (the author believes that this is well-known for experts but he can not find the literature. So we give proofs here for reader's convenience).
A key ingredient is that for a convergent sequence in the period domain, the corresponding sequence of ALE instantons is convergent with respect to the Gromov-Hausdorff topology.
The next proposition follows easily from the original construction in \cite{Kro1} (see the proof of Lemma 3.3.).
\begin{prop}\label{prop:ALE parameterized on the discriminant loci}
    Take $\zeta \in H_\theta  \subset \h_\C$ for some roots $\theta$, and consider a sequence $\{\zeta_j\}_{j=1}^\infty \subset \h_\C^o$ which converges to $\zeta$. Then $\{Y_{\zeta_j}\}_{j=1}^\infty$ converges to $Y_{\zeta}$ in the sense of Cheeger-Gromov, i.e. for any compact subset $K \subset Y^\mathrm{reg}_\infty$ there are open embeddings 
    \begin{equation*}
        \iota_j : U \to Y_{\zeta_j},
    \end{equation*}
    defined on an open neighborhood $U \subset Y^\mathrm{reg}_\infty$ of $K$,
    such that $\iota_j^*\omega_{I_{\zeta_j}} \to \omega_{I_\zeta}$ in $C^\infty(K)$ topology (and the same for $\omega_J$ and $\omega_K$).
\end{prop}
\begin{prop}
    In the above situation, $Y_{\zeta_j} \to Y_{\zeta}$ in the sense of (pointed) Gromov-Hausdorff (with suitable base points).
\end{prop}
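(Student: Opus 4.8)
The goal is to upgrade the Cheeger–Gromov convergence $Y_{\zeta_j} \to Y_\zeta$ established in Proposition \ref{prop:ALE parameterized on the discriminant loci} to pointed Gromov–Hausdorff convergence. The strategy is the standard one: combine (i) the locally-smooth convergence on the regular locus with (ii) uniform control of the ALE ends and of the ``capacity'' of the neighbourhoods of the singular points of $Y_\zeta$, so that the open embeddings $\iota_j$ can be promoted to $\varepsilon$-approximating maps $Y_{\zeta_j} \to Y_\zeta$ (or the reverse) in the sense of the Proposition on $\varepsilon$-approximations recalled above. Concretely: fix base points $p_j \in Y_{\zeta_j}$ in the ``core'' (e.g. lying over a fixed point of the resolution under $\iota_j$), and $p = \lim p_j \in Y_\zeta$; given $R > 0$ and $\varepsilon > 0$, I want an $\varepsilon$-approximating map between $\mathrm{B}_{Y_{\zeta_j}}(p_j; R)$ and $\mathrm{B}_{Y_\zeta}(p; R)$ for $j$ large.

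\textbf{Key steps.} First, I would record the two pieces of uniform geometric control that make this work. At infinity, all the metrics $g_{\zeta_j}$ and $g_\zeta$ are uniformly asymptotic (with the rate in (\ref{cond:asymptotic of ALE at infinity})) to the \emph{same} flat model $\C^2/\Gamma$ up to a bounded diffeomorphism — this is built into Kronheimer's construction and varies continuously in $\zeta$ — so outside a fixed compact core $K_0$ the spaces $Y_{\zeta_j}$ are uniformly Gromov–Hausdorff close to the flat cone, uniformly in $j$. Near the (finitely many) ADE singular points of $Y_\zeta$, I use that each such point has arbitrarily small metric balls of arbitrarily small diameter (it is a single orbifold point of $\C^2/\Gamma'$-type with bounded geometry away from it), and — crucially — that the corresponding region of $Y_{\zeta_j}$ (the preimage of a small punctured ball under $\iota_j$, together with whatever $\iota_j$ misses) also has small diameter: this follows from a volume/diameter estimate, since the $L^2$-norm of curvature and the volumes are uniformly bounded and the region carries only finitely many $(-2)$-spheres whose areas $\to 0$ as $\zeta_j \to \zeta$ (these areas are controlled linearly by the periods $\langle \zeta_j, \theta\rangle$ for $\theta \in R(\zeta)$, which tend to $0$). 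Second, with these estimates in hand, I would define the approximating map: on $\iota_j(U)$ for a suitable compact exhaustion $U = U(\varepsilon, R) \subset Y_\zeta^{\mathrm{reg}}$ use $\iota_j^{-1}$; on the complement (the shrinking necks around singular points, and the far end) map everything to the nearest singular point, resp. compose with the asymptotic identification of the ends. Checking the two $\varepsilon$-approximation axioms — $\varepsilon$-density and the distance-distortion bound $|d(f(x),f(y)) - d(x,y)| \le \varepsilon$ — then reduces to: (a) smooth convergence gives the bound on $U$; (b) the diameter estimates above handle pairs of points inside a single shrinking region; (c) a chaining/triangle-inequality argument handles mixed pairs, using that geodesics between core points can be taken to pass through the thick region $U$, for which one needs a lower bound on the ``width'' of the part of $Y_\zeta^{\mathrm{reg}}$ separating the core from the end — again from the uniform ALE asymptotics.

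\textbf{Main obstacle.} The delicate point is (b)–(c): showing that the regions \emph{not} covered by $\iota_j(U)$ — the collapsing necks around the points of $\mathrm{Sing}(Y_\zeta)$ — have diameter $\to 0$ in $Y_{\zeta_j}$ (not merely small volume), and that they cannot ``absorb'' long geodesics. The clean way is to invoke a uniform diameter bound for an ALE space of type $\Gamma'$ with all exceptional cycle areas at most $\delta$: such a bound (of order $\delta^{1/2}$, say) should follow from Kronheimer's explicit description of the hyperkähler quotient metrics, or alternatively from a Cheeger–Colding-type argument using the uniform bound on $\|\mathrm{Rm}\|_{L^2}$ together with the ALE structure (which rules out collapse with bounded curvature, forcing the small-volume region to also be small-diameter). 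I expect this to be the only step requiring real work; once it is granted, assembling the $\varepsilon$-approximating map and verifying the axioms is routine, and the statement follows from the $\varepsilon$-approximation characterisation of Gromov–Hausdorff convergence applied on each ball $\mathrm{B}(p;R)$.
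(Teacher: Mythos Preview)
Your overall strategy is the same as the paper's: use the Cheeger--Gromov convergence on $Y_\zeta^{\mathrm{reg}}$ from Proposition~\ref{prop:ALE parameterized on the discriminant loci}, and then show that the regions of $Y_{\zeta_j}$ not hit by $\iota_j$ --- the necks around the singular points --- have diameter tending to zero, so that the map ``$\iota_j^{-1}$ on the image, collapse the rest to the singular point'' is an $\varepsilon$-approximation. You have correctly isolated the main obstacle as the diameter bound on these necks.

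Where you diverge from the paper is in how you propose to prove that diameter bound. You suggest $L^2$-curvature control, explicit analysis of Kronheimer's hyperk\"ahler quotient metrics, or Cheeger--Colding theory. The paper's argument is considerably simpler and uses only Bishop--Gromov. Since the metrics are Ricci-flat and the asymptotic cone is $\C^2/\Gamma$, one has a uniform two-sided volume bound $C_2\rho^4/|\Gamma'| \le \mathrm{Vol}_{g_{\zeta_j}}(\mathrm{B}(x;\rho)) \le C_2\rho^4$ for every ball. On the other hand the neck region $\{r \le \epsilon\}$ (where $r$ is the pullback of the Euclidean distance on $\C^2$) has volume $\le C_1\epsilon^4$, uniformly in $j$, directly from the convergence of volume forms. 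These two facts together force every point of the neck to lie within distance $(C_1|\Gamma'|/C_2)^{1/4}\epsilon$ of the complement: otherwise the ball of that radius would already have volume exceeding $C_1\epsilon^4$. That is the whole argument; no appeal to curvature integrals, explicit metrics, or structure theory is needed.

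The paper also does not treat the ALE end separately as you do. It simply localizes to a small ball around each singular point, observing that away from those points the smooth convergence of metrics handles everything; since pointed Gromov--Hausdorff convergence only concerns finite-radius balls, no uniform control of the end at infinity is required.
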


\begin{proof}
 The idea of the proof is the same as that of proposition 6.5. of \cite{OO}.
It is sufficient to show that 
\begin{equation} 
\mathrm{B}(y_\infty; r) \to \mathrm{B}(y_j; r)
\end{equation}
for sufficiently small $r > 0$.
Fix an $r >0$ and a quotient map
\begin{equation}
\begin{aligned}
   \pi : \mathrm{B}(0; r) &\to Y_\zeta \\
   0 &\mapsto y_\infty
   \end{aligned}
\end{equation}
such that $\pi^*g_{\zeta} = g^\mathrm{Euc} + (\text{h.o.t.})$.
Assume $r>0$ is sufficiently small so that $y_\infty$ is the only one singularity contained in $\pi(\mathrm{B}(0;r))$.
Let $K = \pi( \{ z \in \mathrm{B}(0;r) \mid \varepsilon \leq \|z\| \leq r_0 -\varepsilon \}) \subset Y_\zeta^\mathrm{reg}$ and take open embeddings
\begin{equation}
    \iota_j : U \to Y_{\zeta_j}
\end{equation}
which give the convergence $\iota_j ^* g_{\zeta_j} \to g_\zeta$.
Define a map $f_j : \pi(\mathrm{B}(0; r)) \to Y_{\zeta_j}$ by 
\begin{equation}
    \begin{aligned}
         f_j : \pi(\mathrm{B}(0; r)) &\to Y_{\zeta_j} \\
         p &\mapsto 
        \left\{ \begin{array}{cc}
         \iota(p) &\text{ if } p \in U\\   
            y_j  & \text{otherwise},
         \end{array}\right. 
         \end{aligned}
\end{equation}
where $y_j \in Y_{\zeta_j}$ is a point in the relatively compact component of $Y_{\zeta_j} \setminus \iota_j(U)$.
The map $f_j$ is an approximation map. 
In fact, by the Bishop-Gromov inequality, we have 
\begin{equation}\label{ineq:Bishop Gromov}
    1 \geq \frac{\mathrm{Vol}_{g_{\zeta_j}}(\mathrm{B}(x;\rho))}{C \rho^4} \geq \frac{1}{|\Gamma|}
\end{equation}
for any $x $, a point in the relatively compact component of $Y_{\zeta_j} \setminus \iota_j({U})$, $ 0<\rho < r$ and sufficiently large $j$ with some positive constant $C$ independent of $x, \rho,j$.
If we have a point $x_j$ in the relatively compact component of $Y_{\zeta_j} \setminus \iota_j(U)$ such that $d_{g_{\zeta_j}}(x_j, \iota_j(U)):= r_j > \varepsilon$,
then the inequality (\ref{ineq:Bishop Gromov}) implies 
\begin{equation}
   \varepsilon^4 \geq \mathrm{Vol}_{g_{\zeta_j}}(\mathrm{B}(x_j; \varepsilon)) \geq \mathrm{Vol}_{g_{\zeta_j}}(\mathrm{B}(x_j; r_j)) > C \rho_j^4,
\end{equation}
for a constant $C >0$ independent of $j, x, \varepsilon$.
Then $f_j$ are $\varepsilon$-approximation maps for sufficiently large $j$.
\end{proof}

The rest of this subsection is devoted to review an algebro-geometric aspect of ALE hyperk\"ahler gravitational instantons.
Identify $\h_\R^{\oplus 3} \cong \h_\R \oplus \h_\C$ and regard $Y_\zeta$ as a complex manifold with a complex structure $I_\zeta$. 
 Then we have the following properties.
 \begin{prop}[Lemma 3.9. and Proposition 3.10. \cite{Kro1}]
     If $\zeta=(\zeta_r, \zeta_c) =(0, \zeta_c)$, then $Y_\zeta$ is an affine variety.
     Furthermore, there is a natural map 
     \begin{equation}
         Y_{(\zeta_r, \zeta_c)} \to Y_{(0,\zeta_c)}
     \end{equation}
     which is a proper birational map. 
     In particular, if we take $\zeta_r$ so that $\zeta \in (\h_\R \oplus \h_\C)^o$, this gives the minimal resolution of the singularities.
 \end{prop}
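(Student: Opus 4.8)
The plan is to translate Kronheimer's differential-geometric description of $Y_\zeta$ into an algebro-geometric one, after which all three assertions become essentially formal. Recall that $Y_\zeta=\mu^{-1}(\zeta)/G$, where $G$ is the gauge group acting on the flat hyperk\"ahler vector space $M$ of the construction (the representation space of the McKay quiver of $\Gamma$), $\mu=(\mu_\R,\mu_\C)$ is its hyperk\"ahler moment map, and $\zeta=(\zeta_r,\zeta_c)$ lies in the centre of $\mathrm{Lie}(G)\otimes\R^3\cong\h_\R\oplus\h_\C$. Fixing the complex structure $I$, the map $\mu_\C$ is a holomorphic moment map for the complexified reductive group $G_\C$, and the Kempf--Ness theorem identifies
\begin{equation}
    Y_{(\zeta_r,\zeta_c)}\;\cong\;\mu_\C^{-1}(\zeta_c)\,/\!\!/_{\chi(\zeta_r)}\,G_\C ,
\end{equation}
where $\chi(\zeta_r)$ is the linearization (character of $G_\C$) determined by $\zeta_r$ and $\mu_\C^{-1}(\zeta_c)$ is an affine variety, being a closed $G_\C$-stable subscheme of $M$. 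I would first set this dictionary up carefully, following \cite{Kro1}.

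Granting this, the first assertion is immediate: for $\zeta_r=0$ the linearization is trivial, so the quotient is the affine GIT quotient $\mathrm{Spec}\big(\C[\mu_\C^{-1}(\zeta_c)]^{G_\C}\big)$, an affine variety since $G_\C$ is reductive and the invariant ring finitely generated; Kronheimer's computation moreover identifies it with one of the standard affine deformations of $\C^2/\Gamma$ (e.g.\ $xy=\prod(z-a_i)$ in type $A$). The natural map of the second assertion is then the canonical affinization morphism of a GIT quotient,
\begin{equation}
    Y_{(\zeta_r,\zeta_c)}=\mu_\C^{-1}(\zeta_c)/\!\!/_{\chi(\zeta_r)}G_\C\;\longrightarrow\;\mu_\C^{-1}(\zeta_c)/\!\!/G_\C=Y_{(0,\zeta_c)},
\end{equation}
that is, $\mathrm{Proj}$ of a graded ring over $\mathrm{Spec}$ of its degree-zero part; this is projective, hence proper. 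It is an isomorphism over the image of the $\chi(\zeta_r)$-stable locus, a dense Zariski-open subset of $Y_{(0,\zeta_c)}$ (equivalently, over the locus with finite fibres), so it is birational, both sides being irreducible (connected, by Kronheimer) surfaces.

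For the last clause, take $\zeta\in(\h_\R\oplus\h_\C)^o$, so that the generality condition (\ref{cond:generality condition}) holds and $Y_\zeta$ is smooth by Theorem \ref{thm:Hyperkahler structure on the minimal resolution of Kleinnian singularity}, while $Y_{(0,\zeta_c)}$ carries at worst rational double point singularities. The nowhere-vanishing holomorphic symplectic form $\Omega_\zeta$ gives $K_{Y_\zeta}\cong\mathcal{O}$ and restricts to (a constant multiple of) the corresponding form on $Y_{(0,\zeta_c)}$, so the proper birational morphism above is crepant; hence every exceptional curve $C$ satisfies $K_{Y_\zeta}\cdot C=0$ and $C^2<0$, and adjunction forces $C^2=-2$. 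There are then no $(-1)$-curves in the fibres, so the morphism is relatively minimal, i.e.\ it is the (unique) minimal resolution of the rational double points of $Y_{(0,\zeta_c)}$. The step I expect to be the main obstacle is making the Kempf--Ness and variation-of-GIT machinery run with \emph{real}, possibly irrational, stability parameter $\zeta_r$: the clean remedy is to use that the complex surface $Y_{(\zeta_r,\zeta_c)}$ depends on $\zeta_r$ only through its chamber in $\h_\R\setminus\bigcup_\theta D_\theta$ (or its closure), which reduces one to rational $\zeta_r$ where ordinary GIT applies; the remaining point requiring care is the verification that the $\chi(\zeta_r)$-stable locus is large enough to force the birationality claimed above.
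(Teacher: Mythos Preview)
The paper does not supply its own proof of this proposition: it is stated as a citation of Lemma 3.9 and Proposition 3.10 of \cite{Kro1}, with no argument given. Your sketch is a faithful outline of Kronheimer's original approach---the Kempf--Ness identification of the hyperk\"ahler quotient with a GIT quotient of $\mu_\C^{-1}(\zeta_c)$ by $G_\C$, together with the variation-of-GIT affinization map---so there is nothing to compare against beyond the cited source, and your proposal is consistent with it.

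One small remark: your argument for minimality (no $(-1)$-curves via crepancy and adjunction) is perfectly valid but slightly more elaborate than what Kronheimer actually does; in \cite{Kro1} the identification with the minimal resolution is obtained more directly by computing the fibre over the singular point and matching the exceptional configuration with the Dynkin diagram via the McKay correspondence built into the quiver description. Your concern about irrational $\zeta_r$ is well placed, and your proposed remedy (the complex-analytic isomorphism class depends only on the chamber) is exactly the right one.
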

 It is straightforward that the exceptional sets of the birational map 
 \begin{equation}
     Y_{(\zeta_r, \zeta_c)} \to Y_{(0,\zeta_c)}
 \end{equation}
 corresponds to roots $\theta$ which are perpendicular to $\zeta_c$ but not to $\zeta_r$.
 \begin{prop}[section 4. \cite{Kro1}]
     Let $\zeta: \Delta \to \h_\C$ be a holomorphic function over the unit disc $\Delta \subset \C$. 
     Then $\mathcal{Y}_\zeta := \bigcup_{t\in \Delta} Y_{(0,\zeta(t))}$ admits a natural complex structure such that $\mathcal{Y} \to \Delta$ is a flat family over $\Delta$. In particular, if we take $\zeta_r \in \h_\R $ so that $\tilde\zeta(t)= (\zeta_r, \zeta(t)) \in (\h_\R \oplus \h_\C)^o$ for any $t \in \Delta$, we have a simultaneous resolution 
    \begin{equation}
        \mathcal{Y}_{(\zeta_r, \zeta(t))} \to \mathcal{Y}_{(0,\zeta(t))}.
    \end{equation}
 \end{prop}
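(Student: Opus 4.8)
The plan is to reduce the statement to the classical semiuniversal deformation theory of Kleinian singularities together with Kronheimer's identification of the affine spaces $Y_{(0,\zeta_c)}$ with its fibres. Let $\mathcal{S}\to\h_\C/W$ denote the semiuniversal deformation of $\C^2/\Gamma$, realised (Brieskorn--Slodowy--Grothendieck) by restricting the adjoint quotient of the corresponding simple Lie algebra to a Slodowy slice through a subregular nilpotent, so that in particular the total space $\mathcal{S}$ is smooth and $\mathcal{S}\to\h_\C/W$ is flat. Pulling back along the Weyl quotient $q:\h_\C\to\h_\C/W$ gives a flat family
\begin{equation}
    \widetilde{\mathcal{S}} := \mathcal{S}\times_{\h_\C/W}\h_\C \longrightarrow \h_\C .
\end{equation}
By the proposition just quoted (Lemma~3.9 and Proposition~3.10 of \cite{Kro1}), for every $\zeta_c\in\h_\C$ the complex variety $(Y_{(0,\zeta_c)},I_{\zeta_c})$ is biholomorphic to the fibre $\widetilde{\mathcal{S}}_{\zeta_c}$; the point one must extract from Kronheimer's hyperk\"ahler-quotient construction (in which $I$ is the complex structure of an affine GIT quotient whose defining equations depend polynomially on $\zeta_c$) is that these biholomorphisms organise into an isomorphism of families $\{(Y_{(0,\zeta_c)},I_{\zeta_c})\}_{\zeta_c}\cong\widetilde{\mathcal{S}}$ over $\h_\C$.

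First I would dispose of the assertion about $\mathcal{Y}_\zeta$. Given the holomorphic map $\zeta:\Delta\to\h_\C$, set
\begin{equation}
    \mathcal{Y}_\zeta := \widetilde{\mathcal{S}}\times_{\h_\C,\,\zeta}\Delta .
\end{equation}
Then $\mathcal{Y}_\zeta\to\Delta$ is flat, being a base change of the flat morphism $\widetilde{\mathcal{S}}\to\h_\C$; its fibre over $t$ is $\widetilde{\mathcal{S}}_{\zeta(t)}\cong Y_{(0,\zeta(t))}$ by the identification of the previous paragraph; and its total space carries the evident complex structure. Thus $\mathcal{Y}_\zeta=\bigcup_t Y_{\zeta(t)}$ is endowed with a natural complex structure making it flat over $\Delta$, which is the first claim.

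For the simultaneous resolution I would invoke Brieskorn's simultaneous resolution theorem (reviewed in \S4 of \cite{Kro1}): the family $\widetilde{\mathcal{S}}\to\h_\C$ admits a canonical simultaneous resolution $\rho:\widehat{\mathcal{S}}\to\widetilde{\mathcal{S}}$, where $\widehat{\mathcal{S}}$ is smooth, $\widehat{\mathcal{S}}\to\h_\C$ is a smooth morphism, and $\rho$ restricts over each $\zeta_c$ to the (unique) minimal resolution $\widehat{\mathcal{S}}_{\zeta_c}\to\widetilde{\mathcal{S}}_{\zeta_c}=Y_{(0,\zeta_c)}$. By \S4 of \cite{Kro1}, whenever $(\zeta_r,\zeta_c)$ lies in the generic locus the space $(Y_{(\zeta_r,\zeta_c)},I)$ is precisely the minimal resolution of $Y_{(0,\zeta_c)}$, hence $\cong\widehat{\mathcal{S}}_{\zeta_c}$ over $Y_{(0,\zeta_c)}$; note that this complex manifold depends only on $\zeta_c$, varying $\zeta_r$ within the allowed region changing merely the K\"ahler metric, since the minimal resolution of an ADE surface singularity is unique. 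Now choose a fixed $\zeta_r\in\h_\R$ with $(\zeta_r,\zeta(t))\in(\h_\R\oplus\h_\C)^o$ for all $t$ — possible because, $\zeta$ being holomorphic, $\bigcup_{t\in\Delta}R(\zeta(t))$ is a finite set of roots, so only finitely many hyperplanes must be avoided — and set
\begin{equation}
    \mathcal{Y}_{(\zeta_r,\zeta(t))} := \widehat{\mathcal{S}}\times_{\h_\C,\,\zeta}\Delta .
\end{equation}
This is a smooth threefold ($\widehat{\mathcal{S}}\to\h_\C$ being smooth and $\Delta$ smooth), its fibre over $t$ is $\widehat{\mathcal{S}}_{\zeta(t)}\cong Y_{(\zeta_r,\zeta(t))}$, and the pullback of $\rho$ is a proper birational morphism $\mathcal{Y}_{(\zeta_r,\zeta(t))}\to\mathcal{Y}_{(0,\zeta(t))}$ over $\Delta$ which is fibrewise the minimal resolution of $Y_{(0,\zeta(t))}$, i.e. the desired simultaneous resolution.

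The main obstacle is the point italicised at the end of the first paragraph: one must check that Kronheimer's fibrewise biholomorphisms $(Y_{(0,\zeta_c)},I_{\zeta_c})\cong\widetilde{\mathcal{S}}_{\zeta_c}$ are compatible with the family structures, i.e. that the complex structures $I_{\zeta_c}$ vary holomorphically (indeed algebraically) over $\h_\C$ and reproduce $\widetilde{\mathcal{S}}\to\h_\C$ — only then does the phrase ``natural complex structure on $\mathcal{Y}_\zeta$'' acquire its intended meaning, and only then can one likewise identify $\bigcup_t Y_{(\zeta_r,\zeta(t))}$ with $\widehat{\mathcal{S}}\times_{\h_\C}\Delta$ as a complex space rather than merely a set. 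This is implicit in Kronheimer's realisation of the complex structure on the hyperk\"ahler quotient as that of an affine GIT quotient with parameters depending polynomially on $\zeta_c$ (and in the family version of \S4 of \cite{Kro1}), but making it precise requires unwinding that construction; everything else in the argument is formal.
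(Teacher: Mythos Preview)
The paper does not supply a proof of this proposition: it is stated as a review of results from \S4 of \cite{Kro1} and simply cited. Your proposal is correct and is in fact a faithful reconstruction of Kronheimer's own argument --- identifying the family $\{Y_{(0,\zeta_c)}\}$ with the Weyl pullback of the Brieskorn--Slodowy semiuniversal deformation, then invoking Brieskorn's simultaneous resolution over $\h_\C$ and identifying its fibres with $Y_{(\zeta_r,\zeta_c)}$ via Proposition~3.10 of \cite{Kro1}. The ``obstacle'' you flag (that the fibrewise biholomorphisms assemble into a family isomorphism) is exactly the content of \S4 of \cite{Kro1}, where the hyperk\"ahler quotient is realised globally as a GIT quotient with algebraic dependence on $\zeta_c$; you are right that this is the substantive step and that everything else is formal base change. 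One trivial remark: the finiteness of $\bigcup_t R(\zeta(t))$ does not require holomorphicity of $\zeta$, since the ambient root system is already finite.
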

 Furthermore, a quotient space $\h_\C /W$ by the Weyl group $W$ is the Kuranishi space of the singularity $\C^2/\Gamma$.

\subsection{The moduli spaces and its metric completion of K\"ahler K3 surfaces}
In this section, the period theory and its metric completion of K3 surfaces are reviewed.
A main reference of this subsection is \cite{KT}.
\begin{dfn}
    A compact non-singular complex surface $X$ is a K3 surface if it has a nowhere vanishing holomorphic $2$-form on $X$ and simply connected.
    A compact complex surface with ADE-singularities $X$ is a K3 surface with ADE-singularities if its minimal resolution is a K3 surface.
\end{dfn}
It is well-known that K3 surfaces are all diffeomorphic to each other hence we call the underlying differentiable $4$-manifold the K3 manifold.
In particular, their lattices $H^2(X;\Z)$ are all isometric each other.
The lattice is the unimodular even lattice of signature $(3,19)$ and hence it is isometric to $E_8^{\oplus 2} \oplus U^{\oplus 3}$, where $U$ is the hyperbolic lattice of rank 2:
\begin{equation}
    U = 
    \begin{bmatrix}
        0 &1\\
        1 &0
    \end{bmatrix}.
\end{equation}
The lattice is called the \textit{K3 lattice} and is denoted by $L$.
For each positive integer $d$, a primitive element $\lambda \in L$ with $\langle \lambda,\lambda\rangle= 2d$ is unique up to the $\mathrm{O}(L)$ action.
Fix such element $\lambda$, say $\lambda =e + d f$ for a basis of one of $U$ with $\langle e,e \rangle = \langle f,f \rangle =0, \langle e,f\rangle =1$, and let $L_{2d} = \lambda^\perp \subset L$.
A marking $\alpha: H^2(X,\Z) \to L$ of a K3 surface $X$ is an isomorphism of lattices and a pair $(X, \alpha)$ of K3 surface $X$ and its marking $\alpha$ is called a marked K3 surface.
Marked K3 surfaces $(X_1, \alpha_1)$ and $(X_2,\alpha_2)$ are isomorphic if there is an isomorphism $f : X_1 \to X_2$ such that $\alpha_2 = \alpha_1 \circ f^*$.
\begin{dfn}
    Let $\mathcal{M}$ be the isomorphic classes of marked K3 surfaces.
    The period mapping $P: \mathcal{M} \to \mathbb{P}(L\otimes \C)$ is a map defined as follows: For a marked $K3$ surface $(X,\alpha)$, take a nowhere vanishing holomorphic $2$-form $\Omega$ on $X$.
    Then $P(X,\alpha) = [\alpha([\Omega])] \in \mathbb{P}(L\otimes \C)$.
\end{dfn}
Note that the period mapping is independent of the choice of the holomorphic $2$ form $\Omega$ since a holomorphic $2$ form on a K3 surface is unique up to constant as a K3 surface is compact.
The period map satisfies the following.
\begin{thm}
    Let $\widetilde{\Omega}_{K3} \subset L\otimes \C$ be a subset defined by
    \begin{equation}
      \widetilde{\Omega}_{K3} =  \{ x \in L\otimes \C \mid \langle x, x\rangle =0, \langle x, \overline{x}\rangle >0 \}.
    \end{equation}
    Then the period map defines a surjection $\mathcal{P}: \mathcal{M} \to \Omega_{K3} := \mathbb{P}(\widetilde{\Omega}_{K3}) $ which induces a bijection 
    \begin{equation}
        (\text{Isomorphism classes of K3 surfaces}) \to \Omega_{K3}/\mathrm{O}(L).
    \end{equation}
    In particular, $\mathcal{F} := \Omega_{K3}/\mathrm{O}(L)$ is the moduli space of K3 surfaces.
\end{thm}
Let $M$ be an ample line bundle on a K3 surface $X$.
Then for a marking $\alpha$, $\lambda := \alpha (c_1(M))\in L$ is a vector with length $\lambda^2 = \mathrm{deg}(M)$.
In particular, if we take $M$ to be a primitive, i.e. there are no line bundle $N$ on $X$ so that $M = N^{\otimes n}$ for some $n>1$, and degree $2d$, the period of $X$ takes its value in $\mathbb{P}(L_{2d} \otimes \C)$.

However, the period map does not give a bijection from $\mathcal{M}_{2d}$, the isomorphism classes of primitively polarized marked K3 surfaces, to $\Omega_{2d} :=\mathbb{P}(\widetilde{\Omega}_{2d})$, where $\widetilde{\Omega}_{2d} := \widetilde{\Omega}_{K3} \cap L_{2d} \otimes \C$.
Some loci in $\Omega_{2d}$, so called \textit{discriminant loci}, $H = \bigcup H_{\theta}$ have to be removed.
They are defined as
\begin{equation}
    H_\theta := \{ x \in \Omega_{2d} \mid \langle x, \theta\rangle =0 \}
\end{equation}
where the union run through all roots $\theta \in L_{2d}$. 
The loci can be filled by putting K3 surfaces with ADE-singularities, i.e. the period map gives bijection from the isomorphism classes of marked K3 surfaces with ADE singularities equipped with a primitive ample line bundle of degree $2d$ to $\Omega_{2d}$ which sends a K3 surface with singularities to a discriminant locus.
\begin{thm}
    Let $\mathcal{M}_{2d}^o$ be the set of isomorphism classes of marked primitively polarized K3 surfaces of degree $2d$ and let $\mathcal{M}_{2d}$ be the set of isomorphism classes of marked primitively polarized K3 surfaces with ADE singularities of degree $2d$
    Then the period map gives bijection 
    \begin{equation}
    \begin{aligned}
        \mathcal{M}^o_{2d} &\to \Omega_{2d} \setminus H\\
        \mathcal{M}_{2d} & \to \Omega_{2d}.
    \end{aligned}
    \end{equation}
    Furthermore, $\mathcal{F}^o_{2d} = (\Omega_{2d} \setminus H)/\Gamma$ and $\mathcal{F}_{2d} = \Omega_{2d}/\Gamma$ gives the moduli spaces, where 
    \begin{equation}
        \Gamma = \{\alpha \in \mathrm{O}(L) \mid \alpha \lambda = \lambda \}.
    \end{equation}
\end{thm}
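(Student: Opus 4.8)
The plan is to derive this polarized statement from two deep but standard inputs for marked (a priori non-projective) K3 surfaces: the global Torelli theorem, that an \emph{effective} Hodge isometry of $H^2$ is induced by a unique isomorphism, and the surjectivity of the period map onto $\mathbb{P}(\Omega_{K3})$. Granting these, the remaining task is to cut the bijection $\mathcal{M}\to\mathbb{P}(\Omega_{K3})$ down to the slice carrying a degree‑$2d$ polarization, using the geometry of the positive cone of $\mathrm{NS}$ and the Weyl group $W$ generated by reflections in $(-2)$‑classes.

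\textbf{Surjectivity.} Fix $x\in\Omega_{K3,2d}$ and choose a marked K3 surface $(\widetilde X,\alpha')$ with $P(\widetilde X,\alpha')=x$. Since $x\perp\lambda$, the integral class $h_0:=\alpha'^{-1}(\lambda)$ is of type $(1,1)$, so $h_0\in\mathrm{NS}(\widetilde X)$ and $h_0=c_1(D)$ by Lefschetz $(1,1)$. From $h_0^2=2d>0$ and Riemann--Roch on a K3 surface, $h^0(D)+h^0(-D)\geq 2+d$, so after replacing $D$ by $-D$ we may take $D$ effective; in particular $\mathrm{NS}(\widetilde X)$ contains a class of positive square and $\widetilde X$ is projective. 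The ample cone of $\widetilde X$ is one chamber of the positive cone $\mathcal{C}^+\subset\mathrm{NS}(\widetilde X)_\R$ cut out by the walls $\delta^\perp$ of the effective $(-2)$‑classes, and $W$ acts simply transitively on these chambers; pick $\tilde w\in W$ with $\tilde w(h_0)$ in the closure of the ample cone. Then $\alpha:=\alpha'\circ\tilde w^{-1}$ is again a marking (each generating reflection is algebraic, hence fixes $[\Omega_{\widetilde X}]$, so the period is unchanged), and it sends the nef, big, primitive class $h:=\tilde w(h_0)$ to $\lambda$. If $x\notin H$ then $h_0$, hence $h$, lies on no wall, so $h$ is ample, $h=c_1(M)$ for a primitive ample $M$, and $(\widetilde X,M,\alpha)\in\mathcal{M}^o_{K3,2d}$ maps to $x$. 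In general a multiple of $h$ is base‑point free and the associated morphism $\widetilde X\to X$ contracts exactly the $(-2)$‑curves orthogonal to $h$; these form ADE configurations, $X$ is a K3 surface with ADE singularities and minimal resolution $\widetilde X$, and $h$ descends to a primitive ample $M$, so $(X,M,\alpha)\in\mathcal{M}_{K3,2d}$ maps to $x$ ($X$ being singular exactly when $x\in H$).

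\textbf{Injectivity and the moduli statements.} Suppose two objects share the period $x$, with minimal resolutions $\widetilde X_i$ and $\alpha_i(c_1(\pi_i^*M_i))=\lambda$. Then $\phi:=\alpha_1^{-1}\circ\alpha_2\colon H^2(\widetilde X_2;\Z)\to H^2(\widetilde X_1;\Z)$ is a Hodge isometry with $\phi(c_1(\pi_2^*M_2))=c_1(\pi_1^*M_1)$, and since these classes have positive square $\phi$ carries $\mathcal{C}^+$ of $\widetilde X_2$ onto that of $\widetilde X_1$. In the smooth case both classes are ample, hence interior points of ample chambers, so $\phi$ is an effective Hodge isometry and the global Torelli theorem produces an isomorphism $f\colon\widetilde X_1\to\widetilde X_2$ with $f^*=\phi$; then $\alpha_2=\alpha_1\circ f^*$, so the two marked polarized K3 surfaces are isomorphic. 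The ADE case is handled by the same method applied to the resolutions, tracking the exceptional $(-2)$‑curves — precisely the $(-2)$‑curves orthogonal to the polarization — and the Weyl group of the exceptional configuration; Torelli then yields an isomorphism of resolutions respecting the exceptional loci, which descends to an isomorphism of the singular surfaces compatible with the polarizations and markings. Restricting to smooth fibres gives the bijection $\mathcal{M}^o_{K3,2d}\to\mathbb{P}(\Omega_{K3,2d}\setminus H)$, and $\mathcal{F}^o_{2d},\mathcal{F}_{2d}$ are then recovered by forgetting the marking, whose ambiguity amounts to the action of $\mathrm{O}(L_{2d})$ on the period.

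\textbf{Main obstacle.} Beyond invoking the global Torelli theorem and the surjectivity of the period map as black boxes, the genuinely delicate step is the ADE‑singular case: one must arrange the Weyl‑group adjustments that bring $h_0$, respectively $\phi$, into the ample chamber so that they stay compatible with a fixed polarization‑adapted notion of marking of the minimal resolution, and then verify that the resulting isomorphism of resolutions descends to the singular surfaces and preserves the markings. This is exactly where the discriminant decomposition $H=\bigcup_\theta H_\theta$ intervenes: membership $x\in H_\theta$ records which $(-2)$‑classes lie in $\mathrm{NS}$ orthogonally to the polarization, i.e.\ which exceptional curves the polarizing morphism contracts.
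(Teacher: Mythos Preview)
The paper does not supply its own proof of this theorem: it is stated in the preliminaries (Section~2.3) as a known background result, with \cite{KT} given as the main reference for the subsection. Your sketch follows the standard route in the literature---global Torelli plus surjectivity of the period map, together with the Weyl-chamber structure on $\mathrm{NS}_\R$ to land the class $\alpha^{-1}(\lambda)$ in the (closure of the) ample cone and to contract the orthogonal $(-2)$-curves when $x\in H$---and is correct in outline; there is nothing to compare against here beyond noting that this is precisely the argument the cited references carry out.
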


The image of $H_\theta$ by the quotient map $\Omega_{2d} \to \mathcal{F}_{2d}$ is denoted by $\mathcal{H}_\theta$ and the union $\mathcal{H} = \cup_{\theta^2 =-2} \mathcal{H}_\theta$ defines a divisor of $\mathcal{F}_{2d}$, the so called \textit{Heegner divisor}. As in the case of ALE gravitational instantons, a metric picture of this completion of discriminant loci is also valid. 
By Yau's existence theorem, there exists a Ricci-flat K\"ahler metric on a K3 surface for every K\"ahler class uniquely.
Yau's existence theorem for K3 surfaces with ADE singularities in the category of orbifold K\"ahler metrics also holds.
Hence a K3 surface with ADE-singularities equipped with an ample line bundle can be identified with an orbifold Hodge manifold with a Ricci flat metric.
For a holomorphic curve $s: \Delta \to \mathcal{F}_{2d}$, there exists a flat family 
\begin{equation}
\mathcal{X}_s \to \Delta
\end{equation}
of K3 surfaces over the unit disc $\Delta \in \C$ with Ricci flat K\"ahler (possibly orbifold) metrics.
Then it converges to the central fiber as $t\to 0$ in the sense of Gromov-Hausdorff and Cheeger-Gromov \cite{KT} (see also Proposition 6.7. \cite{OO} and \cite{Anderson}).

\subsection{Singularities and period mapping of ADE-singularities}
Let $\h$ be the root lattice of a Cartan sub-algebra corresponds to a finite subgroup $\Gamma \subset \mathrm{SL}(2;\C)$.
Let $\zeta \in \h_\C$ be a vector perpendicular to some roots and let $\{\theta_\alpha\}_{\alpha=1}^n$ be the set of positive roots perpendicular to $\zeta$.
Then, the ALE hyperk\"ahler gravitational instanton $Y_\zeta$ has singularities which is again of ADE-type (with smaller Milnor numbers).
We can identify the singularities in terms of $\{\theta_\alpha\}_{\alpha=1}^n$ as follows.
\begin{lem}\label{lem:identification of the singularities by lattice data}
    In the above situation, decompose $\{\theta_\alpha\}_{\alpha =1}^n = \{\theta_{1,\alpha}\}_{\alpha=1}^{n_1}\sqcup \cdots \sqcup\{\theta_{k,\alpha}\}_{\alpha=1}^{n_k}$ so that each of them forms a maximal irreducible root systems (with their negatives).
    Then the singularities of $Y_\zeta$ is equal to $\{x_1, \ldots, x_k\}$ such that $x_j$ is the ADE singularity isomorphic to $0 \in \C^2/\Gamma_j$ where $\Gamma_j$ is the finite subgroup of $\mathrm{SL}(2;\C)$ corresponding to the root system $\{\theta_{\alpha_j}\}_{\alpha=1}^{n_k}$.
\end{lem}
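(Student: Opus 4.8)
The plan is to reduce the statement to a lattice-theoretic fact about the intersection form on the exceptional curves of the minimal resolution, using Kronheimer's description of the roots in Theorem~\ref{thm:Hyperkahler structure on deformations on Kleinian singularity}. First I would apply that theorem to $X_\zeta = Y_\zeta$, the affine ALE instanton attached to $\zeta \in \h_\C$ (so that $\kappa = \zeta$ in the notation there), and identify $H^2(\widetilde{\C^2/\Gamma};\Z)$ with $\h$ via a marking. Its second bullet gives
\begin{equation}
    R(\zeta) = \{\theta \in \Lambda \mid \theta^2 = -2\}, \qquad \Lambda := \mathrm{Span}_\Z\{[E_j]\},
\end{equation}
where $\rho \colon \widetilde{\C^2/\Gamma} \to X_\zeta$ is the minimal resolution, $E = \bigcup_j E_j$ its exceptional divisor, and $R(\zeta) = \{\theta_\alpha\} \sqcup \{-\theta_\alpha\}$ is the set of all roots perpendicular to $\zeta$. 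Thus it suffices to understand the root system carried by the lattice $\Lambda$ of exceptional curve classes.

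For this I would use the geometry of resolutions of Du Val singularities. Let $x_1, \dots, x_k$ be the finitely many singular points of $X_\zeta$ (there is at least one, since $\{\theta_\alpha\} \neq \varnothing$); each is a rational double point, so $X_\zeta$ is locally analytically $\C^2/\Gamma_i$ near $x_i$ for a finite $\Gamma_i \subset \mathrm{SL}(2;\C)$. Since $X_\zeta$ is normal, $\rho^{-1}(x_i)$ is connected; by the Du Val classification its irreducible components $\{E_j \mid j \in I_i\}$ form the Dynkin configuration of the type of $x_i$; and $\rho^{-1}(x_i) \cap \rho^{-1}(x_{i'}) = \varnothing$ for $i \neq i'$, so the intersection matrix of $E$ is block diagonal with blocks indexed by the $x_i$. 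As this matrix is negative definite (Mumford--Grauert), the classes $[E_j]$ are linearly independent, so
\begin{equation}
    \Lambda = \bigoplus_{i=1}^k \Lambda_i, \qquad \Lambda_i := \mathrm{Span}_\Z\{[E_j] \mid j \in I_i\},
\end{equation}
is an orthogonal direct sum of even, negative-definite root lattices, with $\Lambda_i$ of the irreducible ADE type of $x_i$.

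The proof then finishes with an elementary computation in $\Lambda$. Writing $\theta = \sum_i \theta^{(i)} \in \Lambda$ with $\theta^{(i)} \in \Lambda_i$, each nonzero $\theta^{(i)}$ satisfies $(\theta^{(i)})^2 \leq -2$ (even and negative definite), so $\theta^2 = \sum_i (\theta^{(i)})^2 = -2$ forces exactly one nonzero summand, which then has square $-2$. Hence
\begin{equation}
    R(\zeta) = \bigsqcup_{i=1}^k R_i, \qquad R_i := \{\theta \in \Lambda_i \mid \theta^2 = -2\}.
\end{equation}
Each $R_i$ is an irreducible root system (the cluster $\{E_j \mid j \in I_i\}$ is connected), of the ADE type of $x_i$, and $R_i \perp R_{i'}$ for $i \neq i'$; since an irreducible root subsystem orthogonal to the rest is a connected component, $\{R_1, \dots, R_k\}$ is exactly the decomposition of $R(\zeta)$---equivalently of $\{\theta_\alpha\}$ up to signs---into maximal irreducible subsystems. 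Finally $R_i$ is the root system associated with $\Gamma_i$ under the McKay/ADE correspondence and $x_i \cong 0 \in \C^2/\Gamma_i$, which is the claim. (Alternatively one could invoke Brieskorn's semiuniversal deformation picture for $\C^2/\Gamma$, but the route above stays within the results already recorded in the paper.)

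The only genuinely load-bearing step is the second one: identifying the singular points of $X_\zeta$ with the connected components of $E$, so that the intersection form on $\Lambda$ splits as the orthogonal sum $\bigoplus_i \Lambda_i$ of irreducible ADE root lattices, together with the short parity argument distributing the $(-2)$-vectors of such a sum among its summands. Everything after that---the matching of indices and of root-system types with the local groups $\Gamma_i$---is bookkeeping built on Theorem~\ref{thm:Hyperkahler structure on deformations on Kleinian singularity} and the Du Val classification.
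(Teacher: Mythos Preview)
Your argument is correct. The paper, by contrast, does not spell anything out: its entire proof is a one-line citation to the general theory of equisingular deformations of rational double points (Lemma~6.6 in Wahl), i.e.\ essentially the Brieskorn--Slodowy picture you mention parenthetically at the end. So your route is genuinely different: you stay internal to the paper by invoking Theorem~\ref{thm:Hyperkahler structure on deformations on Kleinian singularity} to pin down $R(\zeta)$ as the $(-2)$-vectors in the exceptional lattice $\Lambda$, then read off the irreducible decomposition from the block structure of the intersection form on the resolution, with the short parity argument doing the splitting. This buys self-containment---nothing beyond the Du Val classification and facts already recorded in the paper is needed---at the price of writing out what the citation to Wahl packages. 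The paper's approach buys brevity and situates the lemma in the broader deformation-theoretic framework (where the stratification of $\h_\C/W$ by singularity type is the basic object), which is arguably the more conceptual explanation but requires the reader to chase an external reference.
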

\begin{proof}
   It is a special case of a general theory of equisingular deformations of rational double points. For instance, see Lemma 6.6. in \cite{Wahl}.
    
\end{proof}

\section{Bubbling trees of polarized K3 surfaces}\label{section:bubbling trees}
\subsection{A localization of periods of K3 surfaces to ALE gravitational instantons}\label{sec:localization of period}
In this subsection, the period theories of K3 surfaces and ALE hyperk\"ahler gravitational instantons are related in an explicit way.
Let
\begin{equation}
(\mathcal{X},\mathcal{L}) \to \Delta
\end{equation}
be a family of polarized K3 surfaces over the unit disc $\Delta \subset \C$.
Assume that a fiber $X_t$ on $t \neq 0$ is smooth and the central fiber $X_0$ has ADE singularities.
If $L_t := \mathcal{L} \mid_{X_t}$ is primitive (i.e. there is no line bundle $M$ such that $L = M^{\otimes m}$ for some $m >1$) and has degree $2d$ then there exists a holomorphic curve
\begin{equation}
    \Phi: \Delta \to \mathcal{F}_{2d}
\end{equation}
from $\Delta$ to the moduli space $\mathcal{F}_{2d}$ of primitively polarized K3 surfaces of degree $2d$.
Note that $\Phi(0) \in \mathcal{H}_{\theta}$ for some $\theta$ by the assumption, where $\mathcal{H}_\theta$ is a Heegner divisor corresponding to a hyperplane $H_\theta \subset L_{2d}\otimes \C$ cut out by a root $\theta \in L_{2d}$ (recall that $L_{2d}$ is the orthogonal complement of a primitive root of length $2d$).
After a suitable base change (if it is necessary), fix a minimal simultaneous resolution 
\begin{equation}
    \widetilde{\mathcal{X}} \to \mathcal{X}
\end{equation}
of the family.
Let $\widetilde{\mathcal{L}}$ be the pullback of $\mathcal{L}$ on $\widetilde{\mathcal{X}}$.
Let $\omega_t \in c_1(L_t)$ be the Ricci-flat K\"aher metric on $X_t$ and take a holomorphic volume form $\Omega_t$ on $X_t$ so that 
\begin{equation}\label{eq:Normalization of holomorphic 2forms}
    \omega_t^2 = \frac{1}{2} \Omega_t \wedge \overline{\Omega}_t.
\end{equation}
As $\widetilde{\mathcal{X}} \to \Delta$ is a smooth family of manifolds, (by replacing $\Delta$ sufficiently smaller if it is necessary) it can be trivialized as a family of $C^\infty$ manifolds:
\begin{equation}
    \widetilde{\mathcal{X}} \cong_{\mathrm{diffeo.}} X \times \Delta.
\end{equation}
For a fixed marking 
\begin{equation}
    \alpha : H^2(X;\Z) \to L,
\end{equation}
there is a holomorphic curve 
\begin{equation}
\begin{aligned}
    \mathcal{P} :\Delta &\to L_{2d} \otimes \C\\
    t &\mapsto \alpha([\Omega_t]).
\end{aligned}
\end{equation}
Assume that the holomorphic $2$-forms $\Omega_t$ with the normalization (\ref{eq:Normalization of holomorphic 2forms}) are taken so that $P$ is a lifting of $\Phi$:
\begin{equation}
    \begin{tikzcd}
        \Delta \arrow[r,"\mathcal{P}"] \arrow[d] & \widetilde{\Omega}_{2d}\arrow[d] \subset L_{2d}\otimes \C  \\
        \Delta \arrow[r, "\Phi"] & \mathcal{F}_{2d}.
    \end{tikzcd}
\end{equation}

Let $x_0 \in X_0 \cong 0 \in \C^2/\Gamma$ be a singularity and let $E_1,\ldots, E_n$ be the irreducible components of the exceptional divisor of the minimal resolution.
Then, for $\theta_j := \alpha([E_j])$, $\mathcal{P}(0) \in H_{\theta_j}$, or equivalently, $\Phi(0) \in \mathcal{H}_{\theta_j}$.
Note that a sub-lattice 
\begin{equation}
\h_\Z := \mathrm{Span}_{\Z} \{\theta_1,\ldots,\theta_n\} \subset L_{2d}
\end{equation}
is isometric to the root lattice of a Cartan sub-algebra of a simple complex Lie algebra of the same ADE-type with $x_0 \in X_0 \cong 0 \in \C^2/\Gamma$.
Take a suitable open neighborhood 
\begin{equation*}
   x_0 \in \mathcal{U} \subset \mathcal{X}
\end{equation*}
which is a deformation of the singularity $x_0 \in X_0 \cong 0\in \C^2/\Gamma$.
The inclusion $\mathcal{U} \hookrightarrow \mathcal{X}$ induces a minimal simultaneous resolution
\begin{equation}
    \widetilde{\mathcal{U}} \to \mathcal{U}
\end{equation}
and the trivialization $\widetilde{\mathcal{X}} \cong X \times \Delta$ induces a trivialization
\begin{equation}
    \widetilde{\mathcal{U}} \cong_{\text{diffeo.}} U \times \Delta.
\end{equation}
Note that $H^2(U,\Z)$ is spanned by $\{[E_j]\}_{j=1}^n$ and then we have an isomorphism
\begin{equation}
    H^2(U,\Z) \cong \h_\Z \subset L_{2d}.
\end{equation}
In particular, we have a holomorphic map 
\begin{equation}
    \zeta: \Delta \to \h_\C
\end{equation}
by sending $[\Omega_t\mid _{U_t}]$ by the above isomorphism.
Note that $[\omega_t]$ is mapped to $0$ identically by the above isomorphisms (recall that we are working on polarized case) hence the $\h_\R$ component of the above map is trivial.
Then it is straightforward 
\begin{equation}
    \zeta = \pi_\h \circ \mathcal{P}
\end{equation}
where $\pi_\h$ is the orthogonal projection to $\h_\C$.
\begin{dfn}
    Let $\mathcal{P} : \Delta \to L_{2d}\otimes \C$ be a holomorphic map and let $D \subset L_{2d}\otimes \C$ be a definite subspace. Then a \textit{localization of $\mathcal{P}$ along $D$} is a holomorphic map $\pi_{D} \circ \mathcal{P} : \Delta \to D$ where $\pi_{D}$ is the orthogonal projection to $D$.
\end{dfn}
In the above situation, $\zeta $ is the localization of $\mathcal{P}$ along $\h_\C$.

\subsection{Bubbling trees from periods}

The rest of this section is devoted to give a full description of bubbling trees of non-collapsing limits of ALE hyperk\"ahler gravitational instantons using the previous results.
\begin{dfn}\label{dfn:pbt}
    A period-bubbling tree $\mathcal{PBT}_{\zeta}$ of a holomorphic map $\zeta : \Delta \to \h_\C$ is a partially ordered set of pairs $\{([\zeta_v], \mathbb{P}(\h_v))\}$ of linear subspaces $\mathbb{P}(\h_v)$ with $[\zeta_v] \in \mathbb{P}(\h_v)$ defined as follows:
\begin{itemize}
    \item the root is $([\zeta_r], \mathbb{P}(\h_\C))$ if $\zeta (t) = t^k \zeta_r + O(t^{k+1})$ with $\zeta_r \neq0$
    \item a vertex $([\zeta_w], \mathbb{P}(\h_w)) $ is a child of $([\zeta_v], \mathbb{P}(\h_v))$ if and only if $\pi_{\zeta_{v}} \circ \zeta(t) = t^l \zeta_w + O(t^{l+1})$ where $\pi_{\zeta_{v}}$ is the projection to $\h_v$ and $\h_w$ is spanned by a maximal irreducible root system $\{\theta_j\}_{j=1}^{n_w} \subset \h_v$ perpendicular to $\zeta_v$.
\end{itemize}
\end{dfn}
Here a finite poset $T$ is a tree if there is a unique maximal element and $u,v \geq w$ implies either $u \geq v$ or $v \geq u$.
The maximal element of $T$ is called the \textit{root} of $T$.
A vertex $v$ is the parent of $w$ if $v = \min \{u \in T \mid u > w\}$ and $w$ is a child of $v$ if $v$ is the parent of $w$.
Note that a parent is unique but a child is not unique.
A vertex $v$ is a leaf of $T$ if there are no children of $v$.

\subsection{Bubbling trees from Gromov-Hausdorff limits}\label{sec:mbt}
In this subsection, another notion of bubbling trees is introduced for a limit space $(X_\infty, g_\infty)$ of a sequence of K\"ahler manifolds $\{(X_j, g_j)\}$ (see \cite{dBS} for an example of a bubbling tree used to study degenerations of K\"ahler-Einstein metrics).
Let $\{(X_j,g_j)\}$ be a sequence of K\"ahler manifolds converging to a normal K\"ahler space $(X_\infty,g_\infty)$.
For simplicity, assume that the convergence is non-collapsing.
To define the bubbling tree at $x_\infty$, some notions on bubbling limits are introduced by following \cite{Song}.
\begin{dfn}
 Let $(X,g)$ be a (possibly non compact) K\"ahler manifold. 
    Its tangent cone at $p$, denoted by $C_p(X)$, is a pointed Gromov-Hausdorff limit 
    \begin{equation}
        \lim_{r \to \infty} (X, r^2 g, p).
    \end{equation}
    An asymptotic cone $C_\infty(X)$ is a pointed Gromov-Hausdorff limit 
    \begin{equation}
        \lim_{r \downarrow 0} (X,r^2 g, p)
    \end{equation}
    for some $p \in X$ (it is independent of the choice of $p$).
    A cone $C$ is called a tangent cone of $B$ at $p \in B$, or $C$ is tangential to $B$ at $p$, if $C_p(B) =C$ and $C$ is an asymptotic cone of $B$, or $B$ is asymptotic to $C$, if $C$ is an asymptotic cone of $B$.
\end{dfn}
\begin{rmk}
    In this case, the cones are unique i.e. do not depend on any choice of subsequences (see \cite{DS2} or \cite{Song}).
\end{rmk}
A \textit{metric bubbling tree} at $x_0 \in X_0$, denoted by $\mathcal{MBT}_{x_0}$, is defined  as follows: as a set, $\mathcal{MBT}_{x_0}$ consists of equivalence classes of pairs of sections $\sigma : \Delta \to \mathcal{X}$ through $x_0$ and scaling factors $c(t) : \Delta \to \R_{> 0}$ which correspond to isomorphism classes of non-cone bubbling limits at $x_0$. Precisely, 
\begin{equation}
    \mathcal{MBT}_{x_0} = \{(\sigma,c) \mid \lim (X_t, c^2(t)g_t, \sigma(t))\ \text{is a non-cone bubbling limits} \}/\sim
\end{equation}
where $(\sigma_1, c_1) \sim (\sigma_2,c_2)$ if and only if 
\begin{itemize}
    \item $\limsup c_1(t) c^{-1}_2(t) , \limsup c_2(t) c_1^{-1}(t) <\infty$ and
    \item $\limsup c_1(t) d_{g_t} (\sigma_1(t), \sigma_2(t)), \limsup c_2(t) d_{g_t}(\sigma_1(t), \sigma_2(t)) <\infty$.
\end{itemize}
Further, a preorder of $\mathcal{MBT}_{x_0}$ is defined by 
\begin{equation}
    [(\sigma_1,c_1)]  \geq [(\sigma_2, c_2)] :\Leftrightarrow
    \limsup c_2(t)^{-1} c_1(t)<\infty\  \text{and}\   \limsup c_1(t) d_{g_t} (\sigma_1(t), \sigma_2(t)) <\infty.
\end{equation}
It is straightforward that it is independent of a choice of representatives and an equivalence class of $(\sigma(t), c(t))$ gives a unique bubbling limit (up to scaling of metrics and choice of base points).
Hence an element of $\mathcal{MBT}_{x_0}$ is often identified with a genuine bubbling limit at $x_0$ by denoting $B = [(\sigma(t), c(t))]$ to indicate $\lim (X_t, c^2(t)g_t, \sigma(t)) = B$.
\begin{rmk}
    The above $\mathcal{MBT}_{x_0}$ parametrizes \textit{every pointed limit} of the \textit{sequence} $\{(X_t, g_t)\}$ with any scaling $c_t \to \infty$ and base points $x_t \to x_\infty$ even if we allow to take subsequence.
    See Corollary (\ref{cor:independence of taking subsequence}).
\end{rmk}

\section{Main result}\label{sec:main}
\textbf{Setting}: Consider a proper flat family 
\begin{equation}\label{setting:given family}
    \pi :(\mathcal{X}, \mathcal{L}) \to \Delta
\end{equation}
of polarized K3 surfaces over the unit disc $\Delta \subset \C$. 
Assume that
\begin{itemize}
    \item general fibers, i.e. fibers on $t \neq 0$, $X_t := \pi^{-1}(t)$ are smooth,
    \item the central fiber $X_0 :=\pi^{-1}(0)$ has ADE singularities $\{x_1, \ldots , x_k \}$ and
    \item $\mathrm{deg} (L_t) = 2d$ where $L_t := \mathcal{L}\mid_{X_t}$ and $L_t$ is primitive (i.e. there is no line bundle $L'$ so that $L = L'^{\otimes m}$ with $m >1$).
\end{itemize}
Let $\Phi: \Delta \to \mathcal{F}_{2d}$ be the holomorphic curve corresponding to the family (\ref{setting:given family}).
Note that, by the assumptions, $\Phi(0) \in \mathcal{H}$ for the Heegner divisor $\mathcal{H} \subset \mathcal{F}_{2d}$.
To lift the curve $\Phi(t)$ to $L_{2d}\otimes \C$, fix a minimal simultaneous resolution
\begin{equation}
\begin{tikzcd}
    (\widetilde{\mathcal{X}}, \widetilde{\mathcal{L}}) \arrow[r, "\rho"] \arrow[d, "\tilde{\pi}"] & (\mathcal{X}, \mathcal{L}) \arrow[d,"\pi"]\\
    \Delta \arrow[r]& \Delta 
\end{tikzcd},
\end{equation}
where $\widetilde{\mathcal{L}} = \rho^*\mathcal{L}$ and $\Delta \ni z \mapsto z^d \in \Delta$.
Then a simultaneous marking 
\begin{equation}
    \alpha : R^2\tilde{\pi}_* \Z_{\widetilde{\mathcal{X}}}  \to L \times \Delta,
\end{equation}
where $\Z_{\widetilde{\mathcal{X}}}$ is the constant sheaf on $\widetilde{\mathcal{X}}$, can be obtained by fixing isomorphisms
\begin{equation}\label{eq:smooth trivialization of the family}
    \widetilde{\mathcal{X}} \cong_{\mathrm{diff.}} X \times \Delta, H^2(X,\Z) \cong L.
\end{equation}
Further, the curve can be lifted to $L_{2d} \otimes \C$ by fixing a holomorphic 2-form $\Omega_{\widetilde{\mathcal{X}}}$ on $\widetilde{\mathcal{X}}$ and a hermitian metric $h_{\widetilde{\mathcal{L}}}$ on $\widetilde{\mathcal{L}}$ so that 
\begin{itemize}
    \item $\omega_t := \mathrm{c}_1(h_{\widetilde{\mathcal{L}}}\mid _{X_t})$ is the Ricci flat K\"ahler form (in the sense of orbifold for $t=0$),
    \item $\Omega_t := \Omega_{\widetilde{\mathcal{X}}}\mid _{X_t}$ is a nowhere vanishing holomorphic $2$-form and $\frac{1}{2}\Omega_t\wedge \overline{\Omega}_t = \omega_t^2$ and
    \item a period mapping $P: \Delta \to \widetilde\Omega_{2d} \subset L_{2d}\otimes \C$
\begin{equation}
    \begin{aligned}
      P:  \Delta & \to  \widetilde\Omega_{2d} \\
        t & \mapsto \alpha([\Omega_t]),
    \end{aligned}
\end{equation}
is a lifting of the holomorphic curve $\Phi$:
\end{itemize}
\begin{equation}
\begin{tikzcd}
    \Delta \arrow[r, "P"] \arrow[d] & \widetilde\Omega_{2d} \arrow[d]\\
    \Delta \arrow[r,"\Phi"]& \mathcal{F}_{2d} .
\end{tikzcd}
\end{equation}
Again note that $P(0) \in H_{\theta}$ for some root $\theta \in L_{2d}$ by assumption and 
\begin{equation}
    P(0) \in H_{\theta} \Leftrightarrow \Phi(0) \in \mathcal{H}_\theta 
\end{equation}
where $\mathcal{H}_\theta$ is the image of $H_\theta$ under the projection $\Omega_{2d} \to \mathcal{F}_{2d}$.

\subsection{Geometry of the bubbling limits}
Some differential geometric aspects of bubbling limits under the above setting are investigated in this subsection.
Let $B = (B, g_B)$ be a bubbling limit of $\{(X_t,g_t)\}$.
Assume $(\sigma(t), c(t))$ gives the bubbling limit. 
Explicitly, assume the following:
\begin{itemize}
    \item A pointed Gromov-Hausdorff convergence 
    \begin{equation}\label{eq:GH convergence of bubbling}
        (X_t, c^2(t)g_t, \sigma(t)) \to (B,g_b,b)
    \end{equation}
    for some $b \in B$ and 
    \item for any compact subset $K \subset B^{\mathrm{reg}}$, there exists open embeddings 
    \begin{equation}\label{eq:embedding to give CG convergence}
        \iota_t : U \to X_t
    \end{equation}
    on an open neighbourhood $U \subset B^{\mathrm{reg}}$ of $K$ such that 
    \begin{equation}\label{eq:CG convergence}
        \iota_t^* c^2(t)g_t \to g_B, \iota_t^* c^2(t) \omega_t \to \omega_B
    \end{equation}
    in $C^\infty_{K}$ topology.
\end{itemize}

\begin{lem}\label{lem:limit is affine ALE}
    $B$ is an affine ALE gravitational instanton.
\end{lem}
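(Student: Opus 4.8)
The plan is to verify the three defining features of an affine ALE gravitational instanton for the limit $(B, g_B)$: that it is a complete hyperk\"ahler $4$-orbifold, that its minimal resolution is a deformation of some $\C^2/\Gamma$ (equivalently $B$ has at worst ADE singularities and is an affine variety), and that it carries a coordinate at infinity with the asymptotics (\ref{cond:asymptotic of ALE at infinity}). First I would record that the rescaled metrics $c^2(t) g_t$ are Ricci-flat (indeed hyperk\"ahler, since $g_t$ is the Ricci-flat K\"ahler metric on a K3 surface), so the Cheeger--Gromov convergence (\ref{eq:CG convergence}) forces $g_B$ to be Ricci-flat K\"ahler on $B^{\mathrm{reg}}$; combined with the fact that a K3 surface carries a parallel holomorphic $2$-form $\Omega_t$ normalized by (\ref{eq:Normalization of holomorphic 2forms}), the rescaled forms $c^2(t)\Omega_t$ subconverge (after the usual bounded-set-and-parallel-transport argument already used in the second Remark of Section \ref{sec:preliminary}) to a nowhere-vanishing parallel holomorphic $2$-form $\Omega_B$ on $B^{\mathrm{reg}}$ with $\omega_B^2 = \tfrac12 \Omega_B \wedge \overline{\Omega}_B$. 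Hence $B^{\mathrm{reg}}$ is hyperk\"ahler.

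Next I would address completeness and the structure of the singularities. By the non-collapsing hypothesis the rescaled manifolds $(X_t, c^2(t)g_t)$ have a uniform Bishop--Gromov volume lower bound (as in the proof of the Proposition just before Section \ref{sec:localization of period}), so the limit is a length space that is complete and has Euclidean volume growth; by the Anderson--Bando--Kasue--Nakajima--Bando theory cited in the introduction, a non-collapsed pointed limit of Ricci-flat K\"ahler surfaces with bounded curvature away from the base point is, off finitely many points, a smooth ALE hyperk\"ahler manifold, and the singular points are orbifold points modelled on $\C^2/\Gamma'$ for finite $\Gamma' \subset \mathrm{SL}(2;\C)$. Since the tangent cone at infinity of $B$ is $\C^2/\Gamma$ (the same $\Gamma$ as the singularity $x_0 \in X_0$, because bubbling at $x_0$ sees only the germ $\mathcal{U}$ which is a deformation of $0 \in \C^2/\Gamma$), the coordinate-at-infinity diffeomorphism and the decay rate $O(r^{-4-k})$ in (\ref{cond:asymptotic of ALE at infinity}) follow from the standard ALE estimates for Ricci-flat ALE $4$-manifolds (Bando--Kasue--Nakajima); and the orbifold singular points of $B$, being isolated rational double points on a surface with a holomorphic symplectic form, are of ADE type with minimal resolution a deformation of $\C^2/\Gamma$ by Lemma \ref{lem:identification of the singularities by lattice data} together with the fact that the total resolution of $B$ embeds as a sublattice of $\h$.

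Finally I would explain affineness: $\kappa_1 = [\omega_B]$, the K\"ahler class in the $I$-complex structure, is trivial. This is exactly where the \emph{polarized} hypothesis enters: in the construction of the localization $\zeta_\h$ in Section \ref{sec:localization of period} the class $[\omega_t]$ restricted to $U_t$ maps to $0$ because $\omega_t \in c_1(L_t)$ and $L_t$ is the pullback of a line bundle from $\mathcal{X}$, which is trivial on the contractible germ $\mathcal{U}$; passing to the rescaled limit, $[\omega_B] = \lim c^2(t)[\omega_t|_{U_t}] = 0$ in $H^2(B^{\mathrm{reg}};\R)$ (more precisely in $IH^2(B)$, since the compact cycles of $B$ already sit inside the exceptional lattice $\h$ of the resolution of $x_0$). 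By the first Proposition following Theorem \ref{thm:surjectivity of the period mapping of ALE} (Lemma 3.9 / Proposition 3.10 of \cite{Kro1}), vanishing of the real period forces $B$ to be an affine variety. Putting these three pieces together, $B$ is an affine ALE gravitational instanton with ADE singularities in the sense of the definition preceding Theorem \ref{thm:Hyperkahler structure on deformations on Kleinian singularity}. The main obstacle is the second step: carefully ruling out collapsing or unbounded-curvature behaviour so that the ABKN-type regularity applies and yields genuine orbifold ALE structure with the sharp $O(r^{-4})$ decay --- the hyperk\"ahler convergence of the $2$-forms and the uniform volume bound are the tools, but the bookkeeping of which points go to the orbifold locus versus escape to infinity is the delicate part, and is what forces one to work with the germ $\mathcal{U}$ rather than the whole K3.
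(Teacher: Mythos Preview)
Your proposal is correct and follows essentially the same route as the paper: the paper's own proof cites \cite{Nakajima} for the Euclidean volume growth and finite $L^2$-curvature, \cite{BKN} to upgrade this to an ALE orbifold structure, and \cite{Bando} (Propositions~5 and~6) for the hyperk\"ahler structure, then invokes Theorem~\ref{thm:orbifold surjectivity of torelli of ALE}. Your three-step plan (hyperk\"ahler on the regular part via convergence of $\Omega_t$; ALE orbifold via Anderson--BKN; affineness via $[\omega_B]=0$) hits the same references in the same order but unpacks more of the logic.

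Two remarks. First, your explicit argument for affineness---that $[\omega_t|_{U_t}]=0$ because the polarization is pulled back through the contraction, hence $[\omega_B]=0$ in the limit---is clearer than the paper's proof, which jumps straight to ``affine ALE'' without isolating why $\kappa_1=0$ (the paper records this fact earlier, in Section~\ref{sec:localization of period}, but does not cite it in the proof of the Lemma). Second, one minor slip: you assert that the asymptotic cone of $B$ is $\C^2/\Gamma$ with the \emph{same} $\Gamma$ as $x_0\in X_0$. This holds only for the minimal bubble; deeper bubbles are asymptotic to $\C^2/\Gamma'$ for a proper $\Gamma'\subsetneq\Gamma$ corresponding to a singularity of a shallower bubble. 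Since the argument only needs \emph{some} finite $\Gamma'\subset\mathrm{SL}(2;\C)$---which BKN supplies directly---this does not affect the validity of your proof.
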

\begin{proof}
    By \cite{Nakajima}, $B$ satisfies the following estimates:
    \begin{equation}\label{eq:maximally volume growth}
        \mathrm{Vol}_{g_b}(\mathrm{B}(b;r)) \geq C r^4
    \end{equation}
    for any $r>0$, with some $C >0$ and $b \in B$, and 
    \begin{equation}\label{eq:finiteness of energy}
        \int_{B} \| \mathrm{Rm}(g_b) \|^2 \mathrm{Vol}_{g_b} < \infty
    \end{equation}
    where $\mathrm{Rm}$ is the total Riemannian curvature tensor.
    The above two estimates (\ref{eq:maximally volume growth}) and (\ref{eq:finiteness of energy}) implies that $B$ is ALE (not necessarily hyperk\"ahler yet) orbifold by \cite{BKN}.
    However, by \cite{Bando} Proposition 5 and 6, $B$ must be hyperk\"ahler.
    Then by Theorem \ref{thm:orbifold surjectivity of torelli of ALE}, $B$ is an affine ALE gravitational instanton.
\end{proof}
\begin{lem}\label{lem:convergent subsequence so that we have hol 2 form}
    For a convergent sequence $(X_t, c_t^2g_t) \to B$, there exists a holomorphic $2$-form $\Omega_B$ on $B$ such that 
    \begin{itemize}
        \item $\omega_B^2 = \frac{1}{2} \Omega_B \wedge \overline{\Omega}_B$ and 
        \item $\iota_{t_j}^*\Omega_{t_j} \to \Omega_B$ in locally smoothly on the regular locus for a suitable subsequence $\{t_j\}_{j=1}^\infty \subset \Delta$ with $t_j \to 0$.
    \end{itemize}
\end{lem}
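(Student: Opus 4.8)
The plan is to transport the normalized holomorphic volume forms to the limit along the Cheeger--Gromov embeddings and extract a smoothly convergent subsequence. Set $\Omega_t' := c_t^2\,\Omega_t$. Then the normalization $\omega_t^2 = \tfrac12\,\Omega_t\wedge\overline{\Omega}_t$ rescales to $(c_t^2\omega_t)^2 = \tfrac12\,\Omega_t'\wedge\overline{\Omega_t'}$; in particular the pointwise norm $|\Omega_t'|_{c_t^2 g_t}$ equals the fixed constant $\sqrt{2}$, independent of $t$ and of the point, and $|\Omega_t'|_{c_t^2 g_t}^2 = |\Omega_t|_{g_t}^2$. Since $(X_t,g_t)$ is a K3 surface with its Ricci-flat Kähler metric, the hyperkähler structure makes the holomorphic volume form parallel, so $\nabla^{g_t}\Omega_t' = \nabla^{c_t^2 g_t}\Omega_t' = 0$ (rescaling by a constant does not change the Levi-Civita connection). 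Pulling back by the embeddings $\iota_t : U \to X_t$ of (\ref{eq:embedding to give CG convergence}), the form $\iota_t^*\Omega_t'$ is parallel with respect to $\iota_t^* c_t^2 g_t$, of type $(2,0)$ with respect to $\iota_t^* I_t$, and of constant $\iota_t^* c_t^2 g_t$-norm $\sqrt 2$.

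Next I would run a compactness argument. Fix an exhaustion $K_1\subset K_2\subset\cdots$ of $B^{\mathrm{reg}}$ by compact sets, with neighbourhoods $U_m\supset K_m$ and embeddings $\iota_t^{(m)}$ as in (\ref{eq:embedding to give CG convergence}), so that $\iota_t^{(m)*}c_t^2 g_t \to g_B$ and $\iota_t^{(m)*}c_t^2\omega_t\to\omega_B$ in $C^\infty(U_m)$ by (\ref{eq:CG convergence}), and $\iota_t^{(m)*}I_t\to I_B$ as tensors by the Remark following (\ref{def:definition of convergence of Kahler manifolds}). Because $\iota_t^{(m)*}\Omega_t'$ is parallel for $\iota_t^{(m)*}c_t^2 g_t$ and the Levi-Civita connections of these metrics converge in $C^\infty_{\mathrm{loc}}$ to that of $g_B$, the iterated $g_B$-covariant derivatives $(\nabla^{g_B})^k\big(\iota_t^{(m)*}\Omega_t'\big)$ are expressed by universal contractions of $\iota_t^{(m)*}\Omega_t'$ (of bounded norm) with the differences of Christoffel symbols (which are bounded in $C^\infty$ on $K_m$ uniformly in $t$); hence the $C^k$-norms of $\iota_t^{(m)*}\Omega_t'$ with respect to $g_B$ are uniformly bounded on $K_m$ for every $k$. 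A standard Arzelà--Ascoli and diagonal argument over $m$ and over $k$ then produces a subsequence $\{t_j\}$, $t_j\to 0$, and a smooth $2$-form $\Omega_B$ on $B^{\mathrm{reg}}$ with $\iota_{t_j}^*\Omega_{t_j}'\to\Omega_B$ in $C^\infty_{\mathrm{loc}}$ on $B^{\mathrm{reg}}$.

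It remains to identify $\Omega_B$. Passing to the limit, $\Omega_B$ is parallel for $g_B$; it is of type $(2,0)$ for $I_B$ as a $C^0$-limit of $(2,0)$-forms under converging complex structures, and a parallel $(2,0)$-form on a Kähler manifold is holomorphic ($\bar\partial\Omega_B = 0$); and $|\Omega_B|_{g_B}\equiv\sqrt 2$, so $\Omega_B$ is nowhere vanishing on $B^{\mathrm{reg}}$. Passing to the limit in the identity $\tfrac12\,\iota_{t_j}^*\Omega_{t_j}'\wedge\overline{\iota_{t_j}^*\Omega_{t_j}'} = \iota_{t_j}^*\big((c_{t_j}^2\omega_{t_j})^2\big)$ gives $\tfrac12\,\Omega_B\wedge\overline{\Omega}_B = \omega_B^2$. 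Finally, since $B$ is an affine ALE gravitational instanton with at worst ADE singularities by Lemma \ref{lem:limit is affine ALE}, near each singular point $\Omega_B$ lifts to a bounded $\Gamma$-invariant holomorphic $2$-form on a punctured ball in $\C^2$, which extends by Hartogs and descends; thus $\Omega_B$ extends to a nowhere vanishing holomorphic $2$-form on all of $B$ with $\omega_B^2 = \tfrac12\,\Omega_B\wedge\overline{\Omega}_B$.

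I expect the main obstacle to be the uniform $C^k$-control of $\iota_{t_j}^*\Omega_{t_j}'$ needed for smooth (not merely $C^0$) subsequential convergence; the clean route is the parallelism coming from the hyperkähler structure, which reduces this entirely to the $C^\infty_{\mathrm{loc}}$ convergence of the metrics. Without it one would instead invoke harmonicity of $\Omega_t'$ and interior elliptic estimates, which is more delicate since the $\iota_t$ are only almost-isometric. A secondary, purely organizational point is arranging the exhaustion/diagonal extraction so that the single subsequence $\{t_j\}$ works simultaneously on all of $B^{\mathrm{reg}}$, and checking that the boundedness and extension of $\Omega_B$ near the ADE points is compatible with the orbifold Ricci-flat metric on $B$.
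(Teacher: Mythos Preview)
Your proof is correct and follows essentially the same strategy as the paper: both exploit the constant pointwise norm of the rescaled holomorphic volume forms (from the normalization $\omega_t^2=\tfrac12\Omega_t\wedge\overline\Omega_t$) together with their parallelism under the hyperk\"ahler Ricci-flat metric to extract a subsequential limit. Your write-up is more complete than the paper's sketch---you spell out the uniform $C^k$-bounds, the Arzel\`a--Ascoli diagonal extraction, and the Hartogs extension across the ADE points, whereas the paper only extracts a limit covector at a single point and invokes parallel transport of $g_B$---but the underlying mechanism is the same.
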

\begin{proof}
    It seems to be a well-known kind of assertion so we only give a sketch of proof (for a detailed proof, consult the proof of Theorem 5.1.\cite{NO} for example, a similar assertion is proved for the case $B$ is a tangent cone but essentially same as our setting).
    Fix a point $x \in B^{\mathrm{reg}}$, a compact subset $K \subset B^{\mathrm{reg}}$ containing $x$ and an open neighborhood $U \subset B^{\mathrm{reg}}$ so that open embeddings $\iota_t :U \to X_t$ which gives the convergence is defined.
    Then at $x$, $\{\iota_t^* \Omega_{t,x} \}_{t\in \Delta} \subset \wedge ^2T_x^*B$ is a bounded set (as we normalized $\Omega_t$ by $\omega_t^2 = \frac{1}{2}\Omega_t \wedge \overline{\Omega}_t $) hence there exists a convergent subsequence $\{\Omega_{t_j,x}\}_{j=1}^\infty$ which converges to a $2$-covector $\Omega_{B,x}$.
    Then by the parallel transportation with respect to $g_B$ gives a holomorphic $2$ form on $B^{\mathrm{reg}}$ with $\omega_B^2 = \frac{1}{2} \Omega_B \wedge \overline{\Omega}_B$.
\end{proof}
Recall that $E = \bigcup E_j \subset X_0$ is the exceptional divisors.
Then they can be regarded as $E_j \subset X_t$ via the trivialization (\ref{eq:smooth trivialization of the family}) (hence in the fixed underlying differentiable $4$-manifold $X$ of $X_t$).
Notice that $\theta_j = \alpha([E_j])$ spans the sub lattice $\h \subset L_{2d}$.
\begin{prop}\label{prop:diameter estimate of vanishing cycles}
    Let $\zeta(t) : \Delta \to \h_\C$ be a localization of a period map $\Phi : \Delta \to L_{2d}\otimes \C$ of a family $(\mathcal{X},\mathcal{L}) \to \Delta$.
    Let $S \subset X$ be a $2$-dimensional $C^\infty$ submanifold which is contracted to $x_0$, i.e. $d_H (x_0, S) \to 0$ as $t \to 0$ (such $S$ is called as a contracted $2$-cycle in the rest of the paper).
    Assume $S$ represents a root, say $\theta = \sum n_j \theta_j \in \h \subset L_{2d} \cong H^2(X;\Z)$.
    If $\langle \theta, \zeta(t) \rangle = a(t)$, then $\mathrm{diam}_{g_t} (S) =O(|a(t) |^{\frac{1}{2}})$.
\end{prop}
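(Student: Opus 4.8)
The plan is to turn the cohomological period $a(t)$ into a metric length scale and then to confine the \emph{given} cycle $S$ to the ALE bubble that lives at that scale, rather than arguing only about a distinguished representative.

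First I would fix the scale. Because the family is polarized, the $\h_\R$-component of the localized period is trivial (as recorded in Section \ref{sec:localization of period}), so $\langle\theta,[\omega_t]\rangle=0$ for every root $\theta\in\h$; thus $\theta$ is represented by Lagrangian cycles and its entire period is recorded by $a(t)=\langle\theta,[\Omega_t]\rangle=\langle\theta,\zeta(t)\rangle$. Restricting the hyperk\"ahler triple $(\omega_t,\mathrm{Re}\,\Omega_t,\mathrm{Im}\,\Omega_t)$ to $U_t$ and invoking the hyperk\"ahler calibration, the area-minimizer in the class $\theta$ is calibrated with area equal to the Euclidean length of the period vector $(\langle\theta,\omega_t\rangle,\langle\theta,\mathrm{Re}\,\Omega_t\rangle,\langle\theta,\mathrm{Im}\,\Omega_t\rangle)=(0,\mathrm{Re}\,a(t),\mathrm{Im}\,a(t))$, which equals $|a(t)|$. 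This singles out $|a(t)|^{1/2}$ as the natural length attached to $\theta$; I use it only to fix the scale, not to replace $S$.

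Next I would rescale. Put $c_t=|a(t)|^{-1/2}$ and choose base points $\sigma(t)\in S$. Under $g_t\mapsto c_t^2 g_t$ one has $\Omega_t\mapsto c_t^2\Omega_t$, so the rescaled period of $\theta$ has modulus $c_t^2|a(t)|=1$. By Lemma \ref{lem:limit is affine ALE}, Proposition \ref{prop:ALE parameterized on the discriminant loci} and Theorem \ref{thm:Hyperkahler structure on deformations on Kleinian singularity}, a subsequence of $(X_t,c_t^2g_t,\sigma(t))$ converges to an affine ALE gravitational instanton $B$ in which $\theta$ persists as a $(-2)$-class of unit calibrated size; by Bishop--Gromov and the $\varepsilon$-regularity already exploited in the convergence arguments above, the sphere representing $\theta$, and more generally the core of $B$ carrying $\theta$, has diameter bounded by a universal constant. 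Thus $\theta$ is concentrated at unit scale after rescaling.

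Finally I would bound $\mathrm{diam}_{g_t}(S)=c_t^{-1}\,\mathrm{diam}_{c_t^2g_t}(S)$, so it suffices to show the rescaled $S$ lies in a bounded region of $B$. Here lies the main obstacle: the given $S$ is an \emph{arbitrary} contracted representative, so a priori it could spread over scales coarser than $|a(t)|^{1/2}$, e.g.\ by developing a long thin neck reaching into $\{d_{g_t}(x_0,\cdot)\gg|a(t)|^{1/2}\}$. To control this I would use that, away from the core, the annulus $\{R|a(t)|^{1/2}\le d_{g_t}(x_0,\cdot)\le r_0\}$ converges without rescaling to a truncated flat cone $(\C^2/\Gamma)\setminus\mathrm{B}(0;R')$, whose second homology carries no nonzero multiple of $\theta$ (Lemma \ref{lem:identification of the singularities by lattice data} together with Theorem \ref{thm:orbifold surjectivity of torelli of ALE}); this confines the homologically essential part of $S$ to the core $\{d_{g_t}(x_0,\cdot)\le R|a(t)|^{1/2}\}$, of diameter $O(|a(t)|^{1/2})$. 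The most delicate point, which I expect to be genuinely the hard part, is to promote this confinement of the essential part to a bound on the whole of $S$: I would combine the Cheeger--Gromov convergence of $U_t$ to its ALE model with a monotonicity/volume estimate for the Ricci-flat metrics $g_t$, forcing a contracted connected representative of a core class to have its full diameter comparable to the core scale, and hence $\mathrm{diam}_{g_t}(S)=O(|a(t)|^{1/2})$.
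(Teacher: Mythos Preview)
Your proposal has the right differential-geometric ingredients but there is a genuine gap, and it is exactly the step you yourself flag as ``the hard part''.

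The paper's proof and yours part ways at the choice of rescaling factor. You set $c_t=|a(t)|^{-1/2}$, fixing the scale by the period, and then try to show that the given $S$ has bounded diameter in the rescaled metric. The paper does the opposite: it sets $c(t)=(\mathrm{diam}_{g_t}(S))^{-1}$, fixing the scale by the geometry of $S$ itself. With that choice $S$ has unit rescaled diameter by construction, so it survives in the pointed limit $B$ as a nontrivial $2$-cycle; this immediately rules out $B$ being flat or a cone and forces $B$ to be an affine ALE instanton. One then reads off
\[
c^2(t_j)\,a(t_j)=\langle c^2(t_j)[\Omega_{t_j}],[S]\rangle\longrightarrow \langle[\Omega_B],[S_B]\rangle,
\]
and an induction on the vanishing order of $a(t)$ (base case: $B$ smooth, so the right-hand side is nonzero; inductive step: cycles contracting into the singular points of $B$ already satisfy the estimate, hence $c^2(t_j)\zeta(t_j)\to[\Omega_B]$ in $IH^2(B)$) gives $c^2(t_j)a(t_j)=O(1)$, i.e.\ $\mathrm{diam}_{g_t}(S)=c(t)^{-1}=O(|a(t)|^{1/2})$.

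Your route leaves you having to prove that an \emph{arbitrary} smooth contracted representative $S$ sits inside a ball of radius $O(|a(t)|^{1/2})$. The topological argument you give --- the annulus outside the core has trivial $H_2$ in the class $\theta$ --- only constrains closed cycles in the annulus; it says nothing about the trace $S\cap\{d\ge R|a(t)|^{1/2}\}$, which is a surface with boundary and has no homological obstruction to being present. Your proposed upgrade via ``monotonicity/volume'' does not apply: $S$ is not assumed minimal (or even of controlled area), so there is no monotonicity formula available, and volume comparison for the ambient metric bounds balls, not arbitrary submanifolds. In short, nothing in your scheme prevents $S$ from having a long neck reaching to scale $|a(t)|^{1/2-\varepsilon}$; the paper's trick of scaling by $\mathrm{diam}_{g_t}(S)$ is precisely what makes this issue disappear.
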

\begin{proof}
    For $S$, consider a scaling $c(t) = (\mathrm{diam}_{g_t}(S))^{-1}$ and a section $\sigma(t)$ such that 
    \begin{equation}
    \sigma(t) \in S \subset X_t.
    \end{equation}
    Then by taking a suitable subsequence, a limit $B=(B,g_B, b)$ exists:
    \begin{equation}\label{eq:Proposition 6, convergence}
    (X_{t_j}, c^2(t_j)g_{t_j}, \sigma(t_j ) ) \to (B,g_B,b)
    \end{equation}
    with holomorphic $2$ forms $\Omega_{t_j} \to \Omega_B$ as in Lemma \ref{lem:convergent subsequence so that we have hol 2 form}.
    If it is either cone or flat, then these are contradictions by the following arguments:
    If $B$ is flat, i.e. isometric to the Euclidean space, let $\iota_{t_j}$ be an open embedding 
    \begin{equation}
    \iota_{t_j} :\mathrm{B}(b, R) \to X_{t_j}
    \end{equation}
    for any $R>0$ which gives the convergence (\ref{eq:Proposition 6, convergence}).
    If $R>0$ is so large that the image contains $S$ (it is possible as $\mathrm{diam}_{g_t} (S) = O(c^{-1}(t))$, say take $R > \limsup \mathrm{diam}_{g_{t_j}}(S)c(t_j)$), then the inverse image $\iota_t^{-1}(S)$ is a non-trivial cycle, which contradicts to $B$ being Euclidean.
    If $B$ is a cone, then it is biholomorphic to $\C^2/\Gamma$ for some $\Gamma \subset \mathrm{SL}(2;\C)$ as it is an asymptotic cone of an affine ALE gravitational instanton.
    Again this is a contradiction since $S$ defines a nontrivial cycle on $B$. Hence it is not flat, in particular it is an affine ALE gravitational instanton.
    In particular, $S$ is not contracted and having a finite diameter in $B$.

    Verify the assertion by an induction on orders of $a(t)$.
    Let $S$ be a contracted $2$-cycle such that $B$ is non-singular.
    Take $R >0$ sufficiently large so that $B\setminus \mathrm{B}(b,R)$ admits a coordinate at infinity.
    Then let $\iota_{t_j} : \mathrm{B}(b,R) \to X_{t_j}$ be the embedding which gives the convergence (\ref{eq:Proposition 6, convergence}) so that the image $\iota_{t_j}(\mathrm{B}(b,R))$ contains $S$.
    Therefore the following holds:
    \begin{equation}
    \langle c^2(t_j) \zeta(t_j), \theta \rangle = c^2(t_j) a(t_j)  \to \langle [\Omega_B], [\iota^{-1}_{t_j}(S)] \rangle \neq 0.
     \end{equation}
     As $a(t)$ is holomorphic with respect to $t$ hence its asymptotic is independent of the choice of subsequences, then the asymptotic
     \begin{equation}
     \mathrm{diam}_{g_t} (S) = c^{-1}(t) = O(|a(t)|^{\frac{1}{2}})
     \end{equation}
     holds.
     For a general $S$, let $\widetilde{B}$ be the minimal resolution of $B$ and take sufficiently large $R>0$ again so that $B\setminus \mathrm{B}(b,R)$ admits a coordinate at infinity.
     Then by the assumption of the induction, for a contracted $2$-cycle $S'\subset \mathrm{B}_{g_{t_j}}(\sigma(t_j), c^{-1}(t)R)$ with $c(t_j) \mathrm{diam}_{g_{t_j}}(S') \to 0$ which represents a root $\theta'$, the following holds:
     \begin{equation}
         \langle c^2(t_j) \zeta(t_j), \theta' \rangle \to 0 .
     \end{equation}
     Therefore $\{c^2 (t_j)\zeta(t_j)\}$ converges to a vector in $IH^2(B)$.
     On the other hand, the convergence $c^2(t_j) g_{t_j} \to g_B, c(t_j)^2\Omega_{t_j} \to \Omega_B$ implies that $c^2(t_j) \zeta(t_j) \to [\Omega_B]$ in $IH^2(B)$.
    In particular, the following asymptotic holds:
    \begin{equation}
    \langle c^2(t_j) \zeta(t_j), \theta \rangle = c^2(t_j) a(t_j) = O(1)
    \end{equation}
    which completes the proof.
\end{proof}
\subsection{Proof of the main theorem}
Recall that there are two bubbling trees $\mathcal{PBT}_{\zeta}$, a period bubbling tree, and $\mathcal{MBT}_{x_0}$, a metric bubbling tree, for a given family $(\mathcal{X}, \mathcal{L}) \to \Delta$ (see section \ref{section:bubbling trees}).
Our main result is the following.
\begin{thm}\label{thm:main theorem}
   Let 
   \begin{equation}
       (\mathcal{X}, \mathcal{L}) \to \Delta 
   \end{equation}
   be a flat proper family of K3 surfaces with smooth general fibers $X_t$ and the central fiber $X_0$ admitting ADE singularities and let $\Phi : \Delta \to \Omega_{2d}$ be a period mapping of the family.
   For a singularity $x_0 \in X_0$, let $\zeta : \Delta \to \h_\C$ be the localization of $\Phi$ along $\h_\C$ (see section \ref{sec:localization of period}).
   Then there is a poset isomorphism 
   \begin{equation}
      f:\mathcal{PBT}_{\zeta} \to \mathcal{MBT}_{x_0}
   \end{equation}
  such that if $f(([\zeta_v], \h_v)) = B$ then $B \cong Y_{\zeta_v}$ as affine ALE instantons, where $Y_{\zeta_v}$ is Kronheimer's affine ALE gravitational instanton corresponding to $\zeta_v \in \h_v $ (cf. theorem \ref{thm:Hyperkahler structure on the minimal resolution of Kleinnian singularity} and theorem \ref{thm:Hyperkahler structure on deformations on Kleinian singularity}).
\end{thm}
\begin{proof}
First, fix data as in the setting (see the beginning part of this section for details).
In particular, for a given family 
\begin{equation}\label{proof of the main theorem: given familt}
    (\mathcal{X}, \mathcal{L}) \to \Delta,
\end{equation}
fix a simultaneous resolution 
\begin{equation}\label{proof of the main theorem: sim resol}
    (\widetilde{\mathcal{X}}, \widetilde{L}) \to \Delta,
\end{equation}
trivialization
\begin{equation}\label{proof of the main theorem: trivialization}
    \widetilde{\mathcal{X}} \cong X \times \Delta,
\end{equation}
marking and embedding 
\begin{equation}\label{proof of the main theorem: markings}
    H^2(\widetilde{\C^2/\Gamma} ; \Z)  \cong \h \hookrightarrow L_{2d} \subset L \cong H^2(X;\Z)
\end{equation}
and hyperk\"ahler structures
\begin{equation}\label{proof of the main theorem: forms}
    \omega_t \in c_1(L_t), \Omega_t\ \text{with}\ \omega_t^2 = \frac{1}{2}\Omega_t \wedge \overline{\Omega}_t. 
\end{equation}
Let $\Phi: \Delta \to L_{2d} \otimes \C$ be the period map and $\zeta : \Delta \to \h_\C$ be its localization.
Then $f$ is defined by the following inductive way: For the root $([\zeta_0], \h_\C) \in \mathcal{PBT}_{\zeta}$, the following expression of $\zeta(t)$ holds by definition:
\begin{equation}
    \zeta (t) = t^{k_0} \zeta_0 + O(t^{k_0 +1}),
\end{equation}
for some $k_0 \geq1$.
Then $f([\zeta_0], \h_\C)$ is defined as 
\begin{equation}
    f([\zeta_0], \h_\C) = [(\sigma_0(t), c_0(t))],
\end{equation}
where $\sigma_0(t)$ is any section and $c_0(t) = |t|^{-\frac{k_0}{2}}$. 
To see this is well-defined, i.e. the sequence $\{(X_t, c_0(t)^2 g_t, \sigma_0(t))\}$ is convergent as $t \to 0$ and defines a non-cone bubbling limit, take a convergent subsequence $\{(X_{t_j}, c_0(t_j)^2g_{t_j}, \sigma_0(t_j))\}$ with $t_j \to 0$ by the precompactness theorem by Donaldson-Sun \cite{DS2}.
Let $B_{\zeta_0} = (B_{\zeta_0}, g_{\zeta_0}, b_{\zeta_0})$ be the limit.
Furthermore, take a nowhere vanishing holomorphic $2$-form $\Omega_{\zeta_0}$ with 
\begin{equation}\label{proof of main theorem: normalization of hol 2 form}
    \omega_{\zeta_0}^2 = \frac{1}{2} \Omega_{\zeta_0} \wedge \overline{\Omega}_{\zeta_0}.
\end{equation}
By Lemma \ref{lem:convergent subsequence so that we have hol 2 form}, $\Omega_{\zeta_0}$ can be taken so that
\begin{equation}\label{proof of main theorem: convergence of hol 2 forms for root}
    \Omega_{t_j} \to \Omega_{\zeta_0}
\end{equation}
with taking further subsequence, again we denote it by $\{t_j\}$.
Note that the choice of $\Omega_{\zeta_0}$ is unique up to $\mathrm{U}(1)$ multiplication.
Recall that $B_{\zeta_0}$ is an affine ALE gravitational instanton (see Lemma \ref{lem:limit is affine ALE}).
Take $R>0$ sufficiently large so that 
\begin{itemize}
    \item $\mathrm{B}_{g_{t_j}}(\sigma_0(t_j), c_0^{-1}(t_j)R) \to \mathrm{B}_{g_{\zeta_0}}(b_{\zeta_0}, R)$ in the Gromov-Hausdorff sense and
    \item $B_{\zeta_0} \setminus \mathrm{B}(b_{\zeta_0}, R)$ admits a coordinate at infinity.
\end{itemize}
Then by Proposition \ref{prop:diameter estimate of vanishing cycles}, all $E_j$ are contained in $\mathrm{B}_{g_{t_j}}(\sigma_0(t_j), c_0^{-1}(t_j)R)$ for any sufficiently small $t_j$.
Furthermore, a root $\theta$ is perpendicular to $\zeta_0$ if and only if its representative $S$ is contracted in $B_{\zeta_0}$ by Proposition \ref{prop:diameter estimate of vanishing cycles} again.
Therefore the convergences 
\begin{equation}
c_0^2(t_j) g_{t_j} \to g_{\zeta_{0}}, c_0^2(t_j) \Omega_{t_j} \to \Omega_{\zeta_0}
\end{equation}
 implies that 
 \begin{equation}
 \zeta_0 = \lim_{j \to \infty} c_0^2(t_j) \zeta(t_j) = [\Omega_{\zeta_0}] \text{ in } IH^2(B).
 \end{equation}
The Torelli theorem for affine ALE instantons (Theorem \ref{thm:orbifold injectivity of period of ALE}) implies that $B_{\zeta_0} \cong Y_{\zeta_0}$ where $Y_{\zeta_0}$ is Kronheimer's ALE instanton (Theorem \ref{thm:Hyperkahler structure on deformations on Kleinian singularity}).
Then the limit is independent of the choice of subsequences and defines a non-cone limits.
Note that the asymptotic cone $\mathcal{C}_\infty(B_{\zeta_0})$ is isomorphic to $0 \in \C^2/\Gamma   \cong x_0 \in X_0 $ as analytic germs, hence $B_{\zeta_0}$ is the minimal bubbling.

For a general vertex $([\zeta_j], \h_j) \in \mathcal{PBT}_{\zeta}$, assume that $f$ is well-defined for the parent $([\zeta_{j-1}], \h_{j-1})$ of $([\zeta_{j}], \h_{j})$ and put $f([\zeta_{j-1}], \h_{j-1}) =[(\sigma_{j-1}, c_{j-1})] = B_{\zeta_{j-1}} \cong Y_{\zeta_{j-1}}$.
By definition of the order $([\zeta_{j-1}], \h_{j-1}) >( [\zeta_j],\h_{j})$, $\zeta(t)$ satisfies the following expression:
\begin{equation}
    \pi_{\h_{\zeta_j}} \circ \zeta (t) = t^{k_j} \zeta_{j} + O(t^{k_j +1})
\end{equation}
where $\h_{\zeta_j}$ is a maximal irreducible sub-root system perpendicular to $\zeta_{j-1} \in \h_{\zeta_{j-1}}$.
Furthermore $B_{\zeta_{j-1}}$ has singularities one to one corresponding to maximal irreducible sub-root systems perpendicular to $\zeta_{j-1}$, say $x_j \in B_{\zeta_{j-1}}$ corresponds to $\h_{\zeta_j}$.
Then $f([\zeta_{j}],\h_j) = [(\sigma_j, c_j)]$ is defined by 
\begin{equation}
    c_j (t) = |t|^{-\frac{k_{j}}{2}} c_{j-1}(t)
\end{equation}
and 
\begin{equation}
    \sigma_j(t) \to x_j \text{ in } (X_t, c_{j-1}^2(t) g_t, \sigma_{j-1}(t)) \to (B_{j-1}, g_{j-1}, b_{j-1}).
\end{equation}
Then take a convergent subsequence $(X_{t_k}, c_j(t_k)^2 g_{t_k}, \sigma_j({t_k})) \to (B_{\zeta_j}, g_{\zeta_j}, b_{\zeta_j})$ and holomorphic $2$ form $\Omega_{\zeta_j}$ again as in the case of the root (\ref{proof of main theorem: normalization of hol 2 form}) (\ref{proof of main theorem: convergence of hol 2 forms for root}).
Note that Proposition \ref{prop:diameter estimate of vanishing cycles} implies that a $2$-cycles $S$ representing a root $\theta$ satisfies
\begin{equation}
    c_{j}(t) d_H(x_j, S) < \infty,
\end{equation}
as $(X_{t_k}, c_j^2(t_k) g_{t_k}, \sigma_j(t_k)) \to (B_{\zeta_j}, g_{\zeta_j}, b_{\zeta_j})$ if and only if $\theta \in \h_{\zeta_j}$.
The similar argument as the proof of Proposition \ref{prop:diameter estimate of vanishing cycles}, or the argument for the root, implies that $\zeta_j \in IH^2(B_{\zeta_j})$ and $B_{\zeta_j}\cong Y_{\zeta_j}$.
Therefore $f: \mathcal{PBT}_{\zeta} \to \mathcal{MBT}_{x_0}$ is a well-defined map.\\
\textbf{Injectivity}: By its construction, $f$ preserves the orders strictly.
For $([\zeta_1], \h_1), ([\zeta_2], \h_2) \in \mathcal{PBT}_{\zeta}$ which are not comparable, take the smallest common ancestor $([\zeta_3], \h_3)$, i.e. the smallest one such that $([\zeta_3], \h_3) > ([\zeta_1], \h_1), ([\zeta_2], \h_2)$.
Then there are two children $([\zeta_1'], \h'_1), ([\zeta_2'], \h_2') < ([\zeta_3], \h_3)$ such that $([\zeta_1], \h_1) < ([\zeta_1'], \h_1')$ and $([\zeta_2], \h_2) < ([\zeta_2'], \h_2')$.
Notice that by the definition, the root systems $\h_{\zeta_1'}, \h_{\zeta_2'}$ are disjoint.
Then Proposition \ref{prop:diameter estimate of vanishing cycles} implies 
\begin{equation}
    c_{\zeta_1'}(t) d(\sigma_{\zeta_1'}(t), \sigma_{\zeta_2'}(t)) \to \infty.
\end{equation}
Hence $f([\zeta_1'], \h_1') \neq f([\zeta_2'],\h_2')$.
If $f([\zeta_1], \h_1) = f([\zeta_2], \h_2)$, then it has ancestors $f([\zeta_1'], \h_1')$ and $f([\zeta_2'], \h_2')$ which are not comparable and it contradicts to $\mathcal{MBT}_{x_0}$ being a tree.
Then $f$ is an injective poset map.\\
\textbf{Surjectivity}: If $f$ is not surjective, let $B$ be a maximal element which is not contained in the image.
Notice that $B= [(\sigma,c)]$ is not the minimal bubble since the root of $\mathcal{PBT}_{\zeta}$ is mapped to the minimal bubble.
In particular, there exists the parent $B' = [(\sigma',c')]$.
By the assumption, there exists an inverse image $([\zeta'], \h')$.
Let $x \in B' \cong 0 \in \C^2/\Gamma_{B} $ be the limit of $\sigma(t)$ in $(X_t, c'^2(t)g_t, \sigma'(t)) \to B'$.
Then it corresponds to a maximal irreducible sub root system $\h_{B}$ perpendicular to $\zeta'$.
For each singularity $x \in B'$ there is exactly only one child of $B'$.
    In fact, if there are two distinct children $B_1 =[(\sigma_1,c_1)], B_2 =[(\sigma_2, c_2)]$ corresponding to the same $x \in B'$, then the following holds: 
    \begin{equation}
        c_1(t)c_2(t) =O(1), c_1(t) d_{g_t}( \sigma_1(t), \sigma_2(t)) \to \infty.
    \end{equation}
    Then there must be a $2$-cycle $S$ representing a root $\theta$ with 
    \begin{equation}
        c(t) \mathrm{diam}_{g_t}(S) \to 0, c_1(t) \mathrm{diam}_{g_t}(S)  \to \infty. 
    \end{equation}
    By Proposition \ref{prop:diameter estimate of vanishing cycles}, $\theta$ satisfies
    \begin{equation}
        \langle \theta, \zeta'\rangle=0
    \end{equation}
    and the maximal irreducible sub-root system $\h_\theta$ perpendicular to $\zeta'$ which contains $\theta$ must define a bubbling limit $[(\sigma_\theta, c_\theta)]$ with 
    \begin{equation}
        c_\theta(t) c_1(t) \to \infty.
    \end{equation}
    This contradicts to $B_1, B_2$ being children of $B'$.
As $B$ and $B_\zeta = f([\zeta], \h_B)$ are corresponding to the same singularity, they must coincide, it contradicts to the assumption.
\end{proof}
The above proof directly implies the following corollary.
\begin{cor}\label{cor:independence of taking subsequence}
Let $c_t \to \infty$ and $x_t \in X_t$ be \textit{any} scaling factor and base points with $x_t \to x_0$ (i.e. not necessarily assumed to come from a section). 
Then for any convergent subsequence $\{(X_j, c_j^2 g_j, x_j)\}$ of $\{(X_t, c_t^2 g_t, x_t)\}$ there exists $ B = [(\sigma(t), c(t))] \in \mathcal{MBT}_{x_0}$ such that 
\begin{equation}
    B \cong \lim (X_j, c_j^2g_j, x_j).
\end{equation}
\end{cor}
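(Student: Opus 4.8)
The plan is to reduce the statement about arbitrary scaling factors $c_t$ and base points $x_t$ to the structure already extracted in the proof of Theorem \ref{thm:main theorem}, namely that every non-cone bubble arising from a \emph{section} is one of the $B_{\zeta_v} \cong Y_{\zeta_v}$ attached to a vertex of $\mathcal{PBT}_\zeta$. The first step is to replace the given base points $x_t \in X_t$ by a holomorphic section: since $\mathcal{X} \to \Delta$ is proper and $x_t \to x_0$, after passing to the subsequence $\{t_j\}$ along which $(X_j, c_j^2 g_j, x_j)$ converges, one can choose a section $\sigma : \Delta \to \mathcal{X}$ with $\sigma(0) = x_0$ whose values stay within bounded $c_j g_j$-distance of $x_j$; indeed Proposition \ref{prop:diameter estimate of vanishing cycles} controls how fast contracted $2$-cycles shrink, so one may take $\sigma$ to pass through (the generic point of) whichever contracted cycle or smooth point the $x_j$ accumulate on. Replacing $x_j$ by $\sigma(t_j)$ changes the pointed limit only by moving the base point a bounded distance, which does not affect which affine ALE instanton appears, only the marked point on it.

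The second step handles the scaling factor $c_j$. Since the limit $\lim(X_j, c_j^2 g_j, x_j)$ is a non-cone bubble, Lemma \ref{lem:limit is affine ALE} forces it to be an affine ALE gravitational instanton $B$, and Lemma \ref{lem:convergent subsequence so that we have hol 2 form} furnishes (after a further subsequence) a holomorphic $2$-form $\Omega_B$ with $\omega_B^2 = \tfrac12 \Omega_B \wedge \overline{\Omega}_B$ realized as a limit of the $\Omega_{t_j}$. The class $\kappa := [\Omega_B] \in IH^2(B)$ is then computed exactly as in the proof of the main theorem: a root $\theta \in \h$ is annihilated by $\kappa$ iff a representing contracted $2$-cycle has bounded $c_j g_j$-diameter, which by Proposition \ref{prop:diameter estimate of vanishing cycles} is governed by the vanishing order of $\langle \theta, \zeta(t)\rangle$ against the order of $c_j^{-2}$. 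This pins down $\kappa$ up to scale and up to an isometry of $\h$ as one of the classes $\zeta_v$ indexing $\mathcal{PBT}_\zeta$: concretely, writing $c_j^{-2} \asymp |t_j|^{m}$, the relevant vertex is the one whose associated exponent $k_v$ (summed along the branch) matches $m$, and if $m$ falls strictly between two consecutive branch exponents the limit is a cone, contradicting the non-cone hypothesis. By Theorem \ref{thm:orbifold injectivity of period of ALE}, $B \cong Y_{\zeta_v} = f(\zeta_v)$, which is the desired element of $\mathcal{MBT}_{x_0}$.

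The step I expect to require the most care is the first one: producing a section $\sigma$ whose distance to the arbitrary sequence $x_j$ is $O(c_j^{-1})$, and checking that the resulting class $c_j^2 \zeta(t_j)$ still converges to the \emph{same} $\zeta_v$ after the base-point shift. The subtlety is that $x_j$ need not converge in $(X_t, c_j^2 g_t, x_j)$ to the marked point used to build the tree, so one must argue that moving the base point by a bounded amount within a fixed affine ALE instanton keeps it in the same equivalence class of $\mathcal{MBT}_{x_0}$ — this is exactly the content of the relation $\sim$ (bounded $c g_t$-distance of the sections), so it is essentially definitional, but it needs to be stated. Everything else is a direct rerun of the arguments already carried out in the proof of Theorem \ref{thm:main theorem}, now applied to the branch of $\mathcal{PBT}_\zeta$ singled out by the order of $c_j^{-2}$ rather than by an a priori choice of $|t|^{-k_j/2}$.
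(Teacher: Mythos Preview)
The paper itself gives no proof of this corollary beyond the single sentence ``The above proof directly implies the following corollary,'' so you are supplying the details the author omits. Your overall strategy---reduce to the analysis in the proof of Theorem~\ref{thm:main theorem} by (i) replacing the arbitrary base points by a section within bounded rescaled distance, and (ii) matching the arbitrary scaling to one of the tree exponents via Proposition~\ref{prop:diameter estimate of vanishing cycles}---is exactly the intended route and is essentially correct.

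Two places deserve tightening. First, the corollary as stated does not explicitly assume the limit is non-cone, yet $\mathcal{MBT}_{x_0}$ by definition contains only non-cone bubbles; you are right to read the statement as tacitly carrying that hypothesis, but it would be cleaner to say so at the outset rather than invoking it mid-argument. Second, your claim that one may write $c_j^{-2} \asymp |t_j|^{m}$ for some integer $m$ is the real content of step two and is not automatic for an arbitrary diverging $c_j$. The justification is that if the limit is a non-cone affine ALE instanton then some root $\theta$ has $c_j^{2}\,|\langle \theta, \zeta(t_j)\rangle|$ bounded away from $0$ and $\infty$ (otherwise every vanishing cycle is either crushed or blown to infinity and the limit is a cone); since $\langle \theta, \zeta(t)\rangle$ is holomorphic in $t$, $|\langle \theta, \zeta(t_j)\rangle| \asymp |t_j|^{k_\theta}$ for an integer $k_\theta$, forcing $c_j \asymp |t_j|^{-k_\theta/2}$. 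Once this is said, your dichotomy ``either $m$ matches a branch exponent or the limit is a cone'' is precisely the mechanism from the surjectivity part of the main theorem's proof, and the conclusion follows. The section-replacement step is, as you note, essentially definitional given the equivalence relation $\sim$ on $\mathcal{MBT}_{x_0}$.
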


\section{Examples}\label{sec:Examples}
This section is devoted to give some examples and comparisons with previous studies.
\subsection{Explicit examples for $A_{k}$ type singularities}
A cyclic quotient $\C^2/\Z_{k+1}$ is called the $A_k$-type singularity.
Their deformation space has an explicit expression as follows:
Consider the Lie algebra of special linear group $\mathfrak{sl}(k+1)$. 
Then its subspace $\h_\C$ consisting of diagonal matrices is a Cartan sub-algebra of $\mathfrak{sl}(k+1)$.
Let $e_j$ be the matrix unit $(\delta_{jj})$.
Then $\{e_j - e_{j+1} \}_{j=1}^{k}$ forms a basis of $\h_\C$.
If $\h_\C$ is equipped with an inner product $\langle\cdot , \cdot \rangle$ which makes $\{e_j \}$ orthonormal, then vectors $ \{e_j - e_{l}\}_{1\leq j,l \leq k+1}$ forms the set of roots with respect to $-\langle \cdot, \cdot \rangle$.
Then the deformation space of an $A_k$ singularity is given by $\h_\C /W$:
\begin{equation}
    \mathrm{Kur}( \C^2/\Z_{k+1} ) = \h_\C /W,
\end{equation}
where $W$ is the Weyl group of $\h_\C$, with discriminant loci
\begin{equation}
    H_\theta = \{\langle x, \theta \rangle =0 \} \subset \h_\C,
\end{equation}
where $\theta$ is a root.
For a given deformation $\mathcal{X} \to \Delta$ of the $A_k$-singularity corresponding to a curve $\Phi : \Delta \to \h_\C /W$, a lifting of the curve $\Phi$, say $\widetilde{\Phi}$, to the Weyl covering $\h_\C$ gives a base change of the family and it admits a minimal simultaneous resolution.
\begin{eg}
Consider an $A_3$ singularity.
Then its Weyl covering of the deformation space $\h_\C$ is a $3$-dimensional vector space.
Take simple roots by $\{\theta_1, \theta_2, \theta_3\}$.
Then their positive roots are 
\begin{equation}
    \begin{aligned}
        \theta_1, &\theta_2, \theta_3, \\
        \theta_1 + \theta_2, &\theta_2 + \theta_3, \theta_1 + \theta_2 + \theta_3 .
    \end{aligned}
\end{equation}
Consider a family defined by 
\begin{equation}
    \zeta(t) = \left(t^2 + \frac{1}{2} t \right) \theta_1 + (t^2 + t) \theta_2 + \left( t^2 + \frac{1}{2}t \right) \theta_3.
\end{equation}
A simple calculation gives 
\begin{equation}
   \left\{ \begin{aligned}
        \langle \theta_1 , \zeta(t) \rangle &= - t^2 \\
        \langle \theta_2, \zeta(t) \rangle  &= -t \\
        \langle \theta_3, \zeta(t) \rangle & = -t^2 .
    \end{aligned}
    \right.
\end{equation}
Then $\zeta(t)$ defines a family with smooth fibres for $0 < |t| << 1$.
Furthermore, a direct calculation implies
\begin{equation}
    \zeta_0 = \zeta(t)/t \mid_{t=0} = \frac{1}{2} \theta_1 + \theta_2 + \frac{1}{2} \theta_3.
\end{equation}
This implies that the minimal bubble has two $A_1$ singularities, corresponding to $\theta_1$ and $\theta_3$.

\end{eg}
\subsection{Comparison with local models}
\subsubsection{Local models by de Borbon-Spotti}
There is another description of the deformation space and its Weyl covering of the $A_k$ singularity in terms equations defining a deformation family.
The $A_k$ singularity has the following standard form:
\begin{equation}
    X_0 = [xy = z^{k+1}] \subset \C^3 .
\end{equation}
Consider the following variety
\begin{equation}\label{eq:versal family of Ak singularity}
    \mathcal{X} = \left[xy = z^{k+1} + \alpha_1  z^k + \cdots + \alpha_{k+1} \right] \subset \C^3_{(x,y,z)} \times \C^{k+1}_{\alpha}.
\end{equation}
The variety $\mathcal{X}$ is a flat family with respect to the second projection $\pi : \mathcal{X} \to \C^{k+1}$.
Then $\mathcal{X} \to \C^{k+1}$ is the universal family of the deformation space of the $A_k$ singularity.
Hence, in particular, for a deformation of the $A_k$ singularity
\begin{equation}
    \mathcal{X} \to \Delta 
\end{equation}
over the unit disc $\Delta \subset \C$, there is a $(k+1)$-tuple $(\alpha_j(t))$ of holomorphic functions over $\Delta$ such that 
\begin{equation}
    \mathcal{X} \cong \left[ xy = z^{k+1} + \alpha_1(t) z^{k} + \cdots + \alpha_{k+1}(t)  \right] \subset \C^3_{(x,y,z)} \times \Delta_t .
\end{equation}
Its Weyl covering is given by 
\begin{equation}
    \begin{aligned}
        \C^{k+1} & \to \C^{k+1}\\
        (a_0, \ldots, a_k) &\mapsto (\alpha_j(a_0,\ldots,a_k))_j
    \end{aligned}
\end{equation}
where $\alpha_j$ is the fundamental symmetric polynomial of order $j$.
Furthermore, the base change of the family is given by 
\begin{equation}
    [xy = \Pi_j (z - a_j(t))] \subset \C^3 \times \Delta.
\end{equation}
Each fibre $X_t = [xy = \Pi(z -a_j(t))]$ can be equipped with a unique $dd^c$-exact ALE metric $g_t$ by \textit{Gibbons-Hawking ansatz}.
Under the above situation, bubbling trees can be described in terms of behavior of $(a_j(t))$ \cite{dBS}.
To see this, a tree $\mathcal{T}$ is constructed from $(a_j(t))$ in the following way:
An equivalence relation $\sim_n$ of holomorphic functions over $\Delta$ for each $n \in \mathbb{N}$ defined as follows:
\begin{equation}
    f \sim_n g :\Leftrightarrow \mathrm{ord}_{t=0} (f- g) \geq n.
\end{equation}
A tree $\mathcal{T}_0$ is defined as follows:
\begin{itemize}
    \item the root is $\{a_0, \ldots, a_{k}\}$ with its level $0$ and 
    \item children of a vertex $\{a_{l_1}, \ldots, a_{l_m} \}$ with its level $n$ consists of the equivalence classes of $\{a_{l_1}, \ldots, a_{l_m} \}$ by $\sim_{n+1}$ with their level $n+1$.
\end{itemize}
Then the tree $\mathcal{T}$ is obtained by contracting \textit{one child vertices}.
\begin{thm}[Theorem 3. \cite{dBS}]\label{thm:dbs}
    For a family $\mathcal{X} = [xy = \Pi (z -a_j(t) ) ]$, the bubbling tree of $\{(X_t, g_t)\}$ is given by $\mathcal{T}$.
\end{thm}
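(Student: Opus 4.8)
The plan is to show that the de Borbon--Spotti tree $\mathcal{T}$ of Theorem \ref{thm:dbs} is canonically identified with the period-bubbling tree $\mathcal{PBT}_\zeta$ of the family $[xy = \Pi_j(z - a_j(t))]$, and then to invoke the main theorem (Theorem \ref{thm:main theorem}) to conclude that $\mathcal{T} \cong \mathcal{MBT}_{x_0}$, i.e. that $\mathcal{T}$ is the genuine metric bubbling tree. The only real work is the combinatorial/lattice-theoretic comparison $\mathcal{T} \cong \mathcal{PBT}_\zeta$, which I would carry out via the explicit $A_k$ root-system description already recorded in Section \ref{sec:Examples}: for $\zeta(t) = \sum a_j(t)\, e_j$ (mod the diagonal), the pairing with the root $e_j - e_l$ is $a_j(t) - a_l(t)$, so $\mathrm{ord}_{t=0}\langle \zeta(t), e_j - e_l\rangle = \mathrm{ord}_{t=0}(a_j - a_l)$, which is exactly the quantity governing the relation $\sim_n$ defining $\mathcal{T}_0$.

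\medskip

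The key steps, in order. First I would make precise the correspondence between the Weyl covering coordinates $(a_0,\dots,a_k)$ of de Borbon--Spotti and the Cartan-algebra valued period localization $\zeta(t) \in \h_\C = \{\sum x_j e_j : \sum x_j = 0\}$: namely $\zeta(t)$ is the projection of $\sum a_j(t) e_j$ to $\h_\C$, so roots perpendicular to $\zeta(t)$ at a given order correspond precisely to pairs $(j,l)$ with $a_j - a_l$ vanishing to that order. Second, I would match the levels: the root $[\zeta_r]$ with $\zeta(t) = t^{k_0}\zeta_r + O(t^{k_0+1})$ records, via the pattern of which pairings $\langle \zeta_r, e_j - e_l\rangle$ vanish, exactly the partition of $\{a_0,\dots,a_k\}$ into $\sim_{k_0+1}$-classes whose internal differences still vanish — i.e. the first nontrivial level of $\mathcal{T}_0$; and the recursion defining children in $\mathcal{PBT}_\zeta$ (project to the span of a maximal irreducible sub-root-system of roots $\perp \zeta_v$, read off the leading term) mirrors the recursion defining children in $\mathcal{T}_0$ (pass to a $\sim_n$-class and refine by $\sim_{n+1}$), because a maximal irreducible sub-root-system of $\{e_j - e_l \perp \zeta_v\}$ is precisely an $A_{m-1}$ sitting on one $\sim$-block of size $m$. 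Third, I would check that the contraction of "one-child vertices" passing from $\mathcal{T}_0$ to $\mathcal{T}$ is exactly the phenomenon that in $\mathcal{PBT}_\zeta$ a vertex $[\zeta_w]$ with $\h_{\zeta_w}$ of the same rank as $\h_{\zeta_v}$ (all roots $\perp\zeta_v$ survive) is not recorded as a separate child but absorbed — so that $\mathcal{PBT}_\zeta$ is literally the contracted tree $\mathcal{T}$; concretely, a block that does not split under the next $\sim$-refinement yields no drop in the root system and hence no new vertex. Finally, I would invoke Theorem \ref{thm:main theorem} to transport this isomorphism to $\mathcal{MBT}_{x_0}$, and note that the identification $Y_{\zeta_v}$ with the multi-Eguchi--Hanson bubbles of \cite{dBS} follows from Kronheimer's Torelli theorem (Theorem \ref{thm:orbifold injectivity of period of ALE}) since both are determined by the same period $\zeta_v$.

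\medskip

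The main obstacle I anticipate is purely bookkeeping rather than conceptual: carefully reconciling the \emph{indexing conventions} on the two sides — de Borbon--Spotti build $\mathcal{T}_0$ by partitioning the \emph{index set} $\{0,\dots,k\}$ and contracting, whereas $\mathcal{PBT}_\zeta$ is phrased intrinsically in terms of projections $\pi_{\zeta_w}$ and maximal irreducible sub-root-systems, so I must verify that "maximal irreducible sub-root-system of $\{e_j - e_l : \langle e_j - e_l, \zeta_v\rangle = 0\}$" really corresponds bijectively to "a $\sim$-equivalence class of size $\geq 2$", including the edge cases where a block of size $m$ further degenerates into several sub-blocks (this is where a single $A_{m-1}$ vertex spawns several children, each an $A_{m_i - 1}$ with $\sum m_i = m$, matching the $\sim_{n+1}$-refinement) and where a block is a singleton or fails to refine (no vertex / contraction). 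A secondary subtlety is that $\mathcal{T}$ in \cite{dBS} is stated to have only "non-cone" bubbles as its vertices, which matches the convention in $\mathcal{PBT}_\zeta$ that vertices are nonzero vectors $[\zeta_v] \in \mathbb{P}(\h_\C)$ — one should confirm the root/minimal-bubble conventions line up and that no spurious cone-vertex is introduced or lost in the translation. Once these conventions are pinned down the isomorphism is essentially forced by the order-of-vanishing computation $\mathrm{ord}_{t=0}(a_j - a_l) = \mathrm{ord}_{t=0}\langle \zeta(t), e_j - e_l\rangle$.
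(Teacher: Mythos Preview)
The paper does not give its own proof of Theorem~\ref{thm:dbs}: it is quoted as Theorem~3 of \cite{dBS}, where it is proved by direct analysis of the Gibbons--Hawking ansatz. The text that follows the statement in Section~\ref{sec:Examples} is not a proof but a \emph{compatibility check}: it verifies that the combinatorial tree $\mathcal{T}$ of de Borbon--Spotti coincides with the period-bubbling tree $\mathcal{PBT}_\zeta$ under the identification (\ref{eq:transelation of two descriptions}), so that the two independently established results (their Theorem~3 for the local model, Theorem~\ref{thm:main theorem} for K3 surfaces) agree. Your combinatorial argument --- matching maximal irreducible sub-root-systems of $\{e_j - e_l : \langle e_j - e_l,\zeta_v\rangle = 0\}$ with $\sim$-blocks via $\mathrm{ord}_{t=0}(a_j - a_l) = \mathrm{ord}_{t=0}\langle \zeta(t), e_j - e_l\rangle$, and recognising the one-child contraction as the case where the root system does not drop --- is exactly this compatibility argument, carried out in more detail than the paper does.

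There is, however, a genuine gap in your final step. Theorem~\ref{thm:main theorem} is stated and proved for a proper flat family of \emph{compact polarized K3 surfaces} with the Ricci-flat metrics $\omega_t \in c_1(L_t)$; the object $\mathcal{MBT}_{x_0}$ there is built from rescalings of \emph{those} metrics. Theorem~\ref{thm:dbs}, by contrast, concerns the \emph{non-compact affine} family $[xy = \Pi_j(z - a_j(t))]$ equipped with the multi-Eguchi--Hanson metrics coming from the Gibbons--Hawking ansatz, which are not restrictions of K3 Ricci-flat metrics to a neighbourhood of $x_0$. Invoking Theorem~\ref{thm:main theorem} as a black box therefore yields the bubbling tree of a K3 family, not of $\{(X_t,g_t)\}$ in the sense of Theorem~\ref{thm:dbs}; the two are a priori different sequences of Riemannian manifolds. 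To close this you would have to either (i) rerun the argument of Theorem~\ref{thm:main theorem} --- Proposition~\ref{prop:diameter estimate of vanishing cycles} and the Torelli identification --- directly in the ALE setting, which is feasible but is not what ``invoke Theorem~\ref{thm:main theorem}'' means, or (ii) prove that the K3 Ricci-flat metrics and the Gibbons--Hawking metrics on the local model produce the same bubbling limits, which is a separate metric comparison you have not addressed. The paper avoids this entirely by treating Theorem~\ref{thm:dbs} as an external input and only checking that the two answers match combinatorially.
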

The two descriptions of deformation spaces are translated as follows and then Theorem \ref{thm:main theorem} and Theorem \ref{thm:dbs} are compatible.
The holomorphic functions $(a_j)$ can be assumed to satisfy
\begin{equation}
    \sum a_j =0
\end{equation}
in (\ref{eq:versal family of Ak singularity}) without loss of generality.
Then there is a bijection 
\begin{equation}\label{eq:transelation of two descriptions}
    \begin{aligned}
        \h_\C & \to \C^{k+1} \\
        \sum a_j \theta _j & \mapsto \sum a_j(e_j - e_{j+1})
    \end{aligned}
\end{equation}
which induces an isomorphism of versal families of two deformations.
Therefore it is sufficient to see that $\mathcal{PBT}_{\zeta}$ and $\mathcal{T}$ gives the same tree under the correspondence (\ref{eq:transelation of two descriptions}).
Let $n$ be the smallest number so that $\{a_j(t) \}$ has nontrivial equivalence classes with respect to $\sim_n$.
Then $\zeta(t)$ is divisible at most $t^{n-1}$.
In particular $\theta = e_j -e_l$ satisfies
\begin{equation}
\begin{aligned}
    \langle \theta , \zeta_0 \rangle &= \langle \theta, \zeta(t)/t^{n-1} \rangle \mid_{t=0} \\
    &= (a_j(t) - a_l(t))/t^{n-1} \mid _{t=0}.
    \end{aligned}
\end{equation}
This implies that 
\begin{equation}
    \theta \perp \zeta_0 \Leftrightarrow a_j \sim_{n+1} a_l.
\end{equation}
Therefore, a maximal irreducible sub root system perpendicular to $\zeta_0$ is given by $\{a_j - a_l \} $ for a child of $\{a_j\}$.
 
\subsubsection{Algebraic theory by Odaka}
In \cite{odaka25}, a candidate of algebraic construction of bubbling limits is proposed (which can be applied for more general situations than our ADE cases).
The following is a review of the construction for the case of surfaces with ADE singularities (see Section 2.2.3, proof of Theorem 2.4. and Section 2.2.4., proof of Theorem 2.13.).
Consider an ADE-type singularity $0 \in X_0$ and its deformation space $\mathrm{Kur}(X_0)$.
In this case, 
\begin{equation}
    \mathrm{Kur}(X_0) \cong \h_\C / W
\end{equation}
where $\h_\C$ is a Cartan subalgebra corresponds to $X_0$ and $W$ is the Weyl group (if $X_0$ is the $A_n$ singularity, $\h_\C = \mathfrak{sl}(n+1)$ and $W = \mathfrak{S}_n$ acting on $\mathfrak{sl}(n+1)$ via the permutations of coordinates, for example).
For a given deformation family $\mathcal{X} \to \Delta$ of $X_0$ over the unit disc, there exists a holomorphic curve 
\begin{equation}
    \varphi: \Delta \to \mathrm{Kur}(X_0)
\end{equation}
and it lifts to a covering of $\mathrm{Kur}(X_0)$ after a suitable base change of the family $\mathcal{X} \to \Delta$:
\begin{equation}
\begin{tikzcd}
    \Delta \arrow[r, "\varphi'"] \arrow[d] & \h_\C \arrow[d]\\
    \Delta \arrow[r,"\varphi"]& \mathrm{Kur}(X_0) = \h_\C/W .
\end{tikzcd}
\end{equation}
Let 
\begin{equation}
    \varphi'(t) = \zeta_0 t^d + O(t^{d+1})
\end{equation}
with $\zeta_0 \neq 0$ and consider a map 
\begin{equation}
\begin{aligned}
    \varphi_{\mathrm{min}}' :\Delta &\to \h_\C\\
    t &\mapsto \varphi'(t)/t^d = \zeta_0 + O(t).
\end{aligned}
\end{equation}
Then the central fiber of the family $\mathcal{X}'_{\mathrm{min}}$ given by $\varphi'_{\mathrm{min}}$ is the minimal bubble of the algebro-geometric construction.
Deeper bubbles can be obtained by restricting the family $\mathcal{X}'_{\mathrm{min}}$ around a singularity of the central fiber and repeating the same procedure.
This is nothing but the construction of our period bubbling tree and hence the main theorem (Theorem \ref{thm:main theorem}) implies that Odaka's algebraic construction of bubbling limits gives genuine bubbling limits for polarized K3 surfaces.

\begin{rmk}
    The central fiber of the family $\mathcal{X}'_{\mathrm{min}}$ is obtained by a weighted blow-up of $\mathcal{X}$ at the singularity $x_0 \in X_0$ as follows.
    After a suitable base change if it is necessary, embed the family $\mathcal{X}$ to $\C^3 \times \Delta$ with the weighted $\C^*$ action (see \cite{Slo} for example) and blow-up $x_0 \in \mathcal{X}$ with respect to the weight times $d$ if $\varphi' = t^d \zeta_0 + O(t^{d+1})$.

    \begin{eg}
        If $\mathcal{X}$ is $A_n$ type, we have 
        \begin{equation}
            \mathcal{X} = [xy = \prod_{j=0}^n (z - a_j(t)) ] \subset \C^3_{x,y,z} \times \Delta_t,
        \end{equation}
        after a suitable base change with $a_j(0) = 0$.
        The weighted $\C^*$ action is given by 
        \begin{equation}
            \alpha \cdot (x,y,z,t) := (\alpha^{n+1} x, \alpha^{n+1} y, \alpha^2 z, \alpha t).
         \end{equation}
        Assume $a(t) = (a_0(t), a_1(t), \ldots, a_{n}(t) ) = t^{2d} a' + O(t^{2d+1})$ and let 
        \begin{equation}
            \mathcal{X}' \to \mathcal{X} 
        \end{equation}
        be the weighted blow-up with the weight $((n+1)d, (n+1)d, 2d,1)$.
        Then, locally, we have 
        \begin{equation}
            (x', y', z', t) \mapsto (t^{(n+1)d} x', t^{(n+1)d} y', t^{2d} z', t)
        \end{equation}
        for the weighted blow-up.
        In particular, the strict transform of the family is
        \begin{equation}
            [t^{2d(n+1)} x'y' =t^{2d(n+1)} \prod_{j=0}^n (z' - a_j(t)/t^{2d} ) ].
        \end{equation}
        Therefore, the complement of the strict transform of $X_0$ in the central fiber of $\mathcal{X}'$ is 
        \begin{equation}
            [xy= \prod_{j=0}^n (z- a'_j)] 
        \end{equation}
        where $a' =(a'_0, \ldots, a_n')$, the minimal bubble at $x_0$.
    \end{eg}
    By repeating the weighted blow-up at ADE singularities of the central fibers, we obtain a family 
    \begin{equation}
        \mathcal{X}^B \to \Delta
    \end{equation}
    with the central fiber $X_0^B = X'_0 \cup \bigcup_{j=1}^k (\cup_{v\in \mathcal{T}_j} B'_v )$, where $\mathcal{T}_j$ is the bubbling tree at $x_j \in X_0$, such that the dual intersection graph of each $\cup_{v \in \mathcal{T}_j} B_v$ is the graph of $\mathcal{T}_j$ and 
    \begin{equation}
        \begin{aligned}
            X_0' \setminus \text{(intersection loci)} &\cong X_0^{\mathrm{reg}}, \\
            B_v' \setminus{(\text{intersection loci)}} & \cong B_v^{\mathrm{reg}},
        \end{aligned}
    \end{equation}
    where $B_v$ is the affine ALE labeled by $v$.
\end{rmk}

\section{Discussion}
\subsection{Toward a multi-scale K-moduli space}
In this subsection, we construct a complex analytic space $\hat{\mathcal{F}}_{2d}$ with a proper birational map $\hat{\mathcal{F}}_{2d} \to \mathcal{F}_{2d}$ which is biregular on $\mathcal{F}_{2d}^o$.
This is a candidate of the so-called \textit{multi-scale K-moduli space} which parameterizes all non-cone bubbling limits (see also \cite{dBS} Section 4).
To construct $\hat{\mathcal{F}}_{2d}$ some notions are necessary.
\begin{dfn}
    Let $\{\theta_1, \ldots, \theta_n\} \subset L_{2d}$ be a set of roots.
    Assume $\h_\Z = \mathrm{Span}_\Z \{\theta_1, \ldots, \theta_n \} \subset L_{2d}$ is isomorphic to the root lattice of a Cartan sub-algebra of type ADE.
    Then an analytic set $S \subset \Omega_{2d}$ of ADE type is an irreducible component of $\cap_{j=1}^n H_{\theta_j}$.
\end{dfn}
Let $\mathcal{S}$ denote the set of analytic sets of ADE type of $\Omega_{2d}$.
Note that $\Gamma (=\text{stabilizer of}\ \lambda)$ acts on $\mathcal{S}$ and there are only finitely many orbits as an orbit corresponds to a stratum of the Heegner divisor $\mathcal{H} \subset \mathcal{F}_{2d}$.
In particular, $\mathcal{S}$ is locally finite. 
To obtain the $\hat{\mathcal{F}}_{2d}$, consider the following successive blow-ups:
Let $\Omega^{(1)}_{2d}$ be the blow-up along all minimal elements (with respect to the inclusion) in $\mathcal{S}$:
\begin{equation}
   b^{(1)} : \Omega^{(1)}_{2d} \to \Omega_{2d}.
\end{equation}
Note that they are locally finite and $b^{(1)}:\Omega_{2d}^{(1)} \to \Omega_{2d}$ is independent of the choice of the order of blow-ups.
In fact, consider a point $p \in S := S_1 \cap \ldots \cap S_k$, where $S_j$ are minimal analytic sets of  ADE type. 
Consider the normal spaces of $\widetilde{S}_j \subset \widetilde{\Omega}_{2d}$, the preimage of $S_j$.
If $S_j$ is cut out by $\{\theta_{j,1},\ldots, \theta_{j,n_j} \}$ then, for a preimage $\tilde{p} \in \tilde{S} =\tilde{S}_1 \cap \ldots \cap \tilde{S}_k$, we have 
\begin{equation}
    T_{\tilde{p}} \tilde{\Omega}_{2d} = T_{\tilde{p}} \tilde{S} \oplus \h_1 \oplus \cdots \oplus \h_k (\subset L_{2d} \otimes \C),
\end{equation}
where $\h_j = \mathrm{Span}_\C \{\theta_{j,1}, \ldots, \theta_{j,n_j}\}$.
In fact, we have $\h_j \cap \h_l = \{0\}$ for $j \neq l$ since each $S_j$ are minimal among ADE type analytic sets.
Note that $\h_j$ is isomorphic to the normal space of $\tilde{S}_j$ at $\tilde{p}$ which implies that $b^{(1)}$ is independent of the choice of orders of blow-ups.
We define the set of analytic sets of ADE type $\mathcal{S}^{(1)}$ in $\Omega_{2d}^{(1)}$ to be the set of strict transforms of analytic sets of ADE type in $\Omega_{2d}$ which are not minimal.
Define 
\begin{equation}
b^{(k)} : \Omega^{(k)}_{2d} \to \Omega_{2d}^{(k-1)}
\end{equation}
to be the blow-up along all minimal analytic sets of $\Omega^{(k-1)}_{2d}$ inductively.
Note that if $S \subset S'$ for analytic sets of type ADE, then $\h' \subset \h$ for the corresponding Cartan subalgebras.
Hence, in particular, there exists a $k$ so that $\Omega_{2d}^{(k)}$ has no analytic sets of type ADE with codimension greater than $2$.
Then let $\hat{\Omega}_{2d}$ be the $\Omega_{2d}^{(k)}$.
The group action $\Gamma \curvearrowright \Omega_{2d}$ naturally extends to $\hat{\Omega}_{2d}$ and $\hat{\mathcal{F}}_{2d}$ is the quotient by $\Gamma$:
\begin{equation}
    \hat{\mathcal{F}}_{2d} := \Gamma \backslash \hat{\Omega}_{2d}.
\end{equation}
\begin{lem}
    The action $\Gamma \curvearrowright \hat{\Omega}_{2d}$ is properly discontinuous.
 \end{lem}
 \begin{proof}
     Show the following assertion: For all $l \leq k$, the action $\Gamma \curvearrowright \Omega^{(l)}_{2d}$ is properly discontinuous.
     Note that for any subset $A \subset \Omega^{(1)} _{2d}$, we have
     \begin{equation}
         \gamma A \cap A \neq \varnothing \Rightarrow \gamma b^{(1)}(A) \cap b^{(1)}(A)  \neq \varnothing.
     \end{equation}
     Hence, in particular,
     \begin{equation}
         \{ \gamma \in \Gamma \mid \gamma A \cap A \neq \varnothing \} \subset \{\gamma \in \Gamma \mid \gamma b^{(1)} (A) \cap b^{(1)}(A) \neq \varnothing \}.
     \end{equation}
     Then the properly discontinuity of $\Gamma \curvearrowright\Omega_{2d}$ implies the properly discontinuity of $\Gamma \curvearrowright \Omega_{2d}^{(1)}$.
     The similar argument for $\Gamma \curvearrowright \Omega_{2d}^{(l)}, b^{(l)} : \Omega_{2d}^{(l)} \to \Omega_{2d}^{(l-1)}$ shows that the actions are properly discontinuous inductively.
 \end{proof}
\begin{cor}
    The quotient $\hat{\mathcal{F}}_{2d}$ is an analytic space.
\end{cor}
The space $\hat{\mathcal{F}}_{2d}$ parameterizes the set of non-cone bubbling limits in the follwing sense.
\begin{prop}
    Let $p \in \mathcal{F}_{2d}$ be a point in the Heegner divisor (i.e. $p$ corresponds to a K3 surface with ADE singularities).
    Consider the set of germs of holomorphic curves passing through $p$. 
    For two germs $\varphi$ and $\psi$ passing through $p$, take a lifting $\Phi$ and $\Psi$ to $\Omega_{2d}$ and consider period bubbling trees $\mathcal{T}_x$ and $\mathcal{T}_x'$ defined by $\Phi$ and $\Psi$ at a singularity $x \in X_0$ ($=$ the K3 surface corresponds to $p$).
    If there exists an $\alpha \in \Gamma$ such that $\zeta \in \mathbb{P}(\h) \mapsto \alpha \zeta \in \mathbb{P}(\alpha \h)$ induces poset isomorphisms $\mathcal{T}_x \to \mathcal{T}_x'$ for all $x \in X_0^{\mathrm{sing}}$, $\varphi$ and $\psi$ are said to be equivalent.
    Then $\varphi$ is equivalent to $\psi$ if and only if $\hat{\varphi}(0) = \hat{\psi}(0)$ for the liftings
    \begin{equation}
        \hat{\varphi}, \hat{\psi}: \Delta \to \hat{\mathcal{F}}_{2d}.
    \end{equation}
\end{prop}
\begin{proof}
  Let $P \in S = S_1 \cap \cdots \cap S_n \subset \Omega_{2d}$ be a preimage of $p \in \mathcal{F}_{2d}$.
  We fix a coordinate $(z_{1,1}, \ldots, z_{j,l}, \ldots, z_{n,n_n}, \ldots, z_{19})$ centered at $P$ so that 
  \begin{equation}
      S_j = [z_{j,1} = \cdots = z_{j, n_j} =0],
  \end{equation}
  so that $\{z_{j,l} \}_{l=1}^{n_j} \cap \{z_{k,l}\}_{l=1}^{n_k} = \varnothing$ for $ j\neq k$.
  Then we see that for a holomorphic map $\varphi: \Delta \to \Omega_{2d}$ with $\varphi(0) = P$, 
  if $\varphi(t) = (\varphi_1(t), \ldots, \varphi(t)_{19})$ under the coordinate, the lifting $\varphi^{(1)}(t): \Delta \to \Omega_{2d}^{(1)}$ is
  \begin{equation}
    \varphi^{(1)} (t)  = (\varphi_1(t) , \varphi_2(t) / \varphi_1(t), \ldots, \varphi_{j,1}(t), \varphi_{j,2}(t)/\varphi_{j,1}(t), \ldots, ) 
  \end{equation}
In particular, $\varphi^{(1)}(0) \in b^{(1), -1}(P) \cong \mathbb{P}(\h_1) \times \cdots \times \mathbb{P}(\h_n)$ is the n-tuple of the roots of the bubbling trees $\mathcal{T}_1, \ldots, \mathcal{T}_n$ of $\varphi$ at each singularities corresponding to $S_j$.
Similarly, by replacing the coordinate by $\{z_{j,k,l}\}$ so that $\{z_{j,k,l} \}_{l=1}^{n_{j,k}} \subset \{z_{j,k}\}_{k=1}^{n_j}$ defines a subvariety 
\begin{equation}
    S_{j,k} = [z_{j,k,1} = \cdots = z_{j,k,n_{j,k}} =0] \supset S_j
\end{equation}
which is a minimal analytic set of ADE type containing $S_j$.
Then we see that $\varphi^{(2)}(t): \Delta \to \Omega_{2d}^{(2)}$, the lifting of $\varphi^{(1)}$, is in a form of 
\begin{equation}
    \varphi^{(2)}(t) = (\ldots, \varphi_{j,k,l}(t)/ \varphi_{j,k,1}(t), \ldots ),
\end{equation}
and hence $\varphi^{(2)}(0)$ is the tuple of the second descendants of the trees.
Inductively, it follows that $\varphi^{(l)}(0)$ takes its value the $l$-th descendants of the trees.
By the definition of the equivalence $\varphi \sim \psi$ and the above argument, it follows that $\varphi \sim \psi$ if and only if $\hat{\varphi}(0) = \hat{\psi}(0)$.
\end{proof}
\textbf{Question}: Can we construct a \textit{universal family} on $\hat{\mathcal{F}}_{2d}$ in a suitable sense ?

See also \cite{dBS} Section 2 (subsection 2.4. in particular) which gives a picture to regard the Deligne-Mumford compactification of the moduli space of $\mathbb{P}^1$ with $n$-points as the \textit{multiscale K-moduli space} of K\"ahler-Einstein metrics on $\mathbb{P}^1$ with cone angles at $n$-points.

\appendix 
\section{Notes on ALE hyperk\"ahler gravitational instantons}
In this appendix, some propositions on ALE hyperk\"ahler gravitational instantons are presented  sketches of proofs are given for readers convenience.
The author believes these facts are well-known for experts but he can not find proofs in the literature.
In the rest of this appendix, $\h_\R^{\oplus 3}$ is identified as $\h_\R \oplus \h_\C$ in the natural way and for a hyperk\"ahler manifold $(M,g,I,J,K)$, it is regarded as a complex manifold by the complex structure $I$.
\begin{prop}\label{prop:appendix 1}
   Let $X$ be the underlying differentiable manifold of the minimal resolution of $\C^2/\Gamma$ where $\Gamma \subset \mathrm{SL}(2;\C)$.
   For a triple $\bm{\kappa} = (\kappa_1, \kappa_2, \kappa_3) \in H^2(X,\Z)^{\oplus 3}$, let $R(\bm{\kappa}) \subset H(X;\Z)$ be the subset of roots perpendicular to $\bm{\kappa}$: 
   \begin{equation}
       R(\bm{\kappa}) = \{\theta \in H^2(X;\Z) \mid \theta^2 =-2, (\theta, \kappa_j) =0\ (j=1,2,3)\}.
   \end{equation}
   Then there exists a unique hyperk\"ahler orbifold $X_{\bm{\kappa}} =(X_{\bm{\kappa}}, g_{\bm{\kappa}}, I_{\bm{\kappa}}, J_{\bm{\kappa}}, K_{\bm{\kappa}})$ such that
   \begin{itemize}
       \item $X_{\bm{\kappa}}$ is a deformation of $\C^2/\Gamma$. In particular, the minimal resolution $\rho: \widetilde{X}_{\bm{\kappa}} \to X_{\bm{\kappa}}$ is diffeomorphic to $X$ and
       \item the cohomology classes of $\rho^*\bm{\omega} = (\rho^*\omega_{\bm{\kappa},I}, \rho^*\omega_{\bm{\kappa},J}, \rho^*\omega_{\bm{\kappa},K})$ is equal to $\bm{\kappa}$ where $\bm{\omega}_{\bm{\kappa}}$ is the triple of the K\"ahler forms. In particular,
       \begin{equation}
           R(\bm{\kappa}) = \{\theta = \sum n_j [E_j] \mid \theta^2 =-2, n_j \in \Z \},
       \end{equation}
       where $\{E_j\}$ is the irreducible components of the exceptional divisors.
   \end{itemize}
\end{prop}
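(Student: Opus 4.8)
The plan is to reduce to Kronheimer's smooth case --- Theorem \ref{thm:Hyperkahler structure on the minimal resolution of Kleinnian singularity} and Theorem \ref{thm:torelli type theorem of ALE} --- via a perturbation-and-limit argument, in the spirit of the proof of Proposition \ref{prop:ALE parameterized on the discriminant loci}; equivalently one can argue directly with Kronheimer's hyperk\"ahler quotient $\mu^{-1}(\bm{\kappa})/\mathcal{F}$, where a non-generic level $\bm{\kappa}$ forces finite stabilizers and hence an orbifold quotient.

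\emph{Existence.} First I would choose a sequence $\bm{\kappa}^{(m)} \in H^2(X;\R)^{\oplus 3}$ satisfying the generality condition (\ref{cond:generality condition}), with $\bm{\kappa}^{(m)} \to \bm{\kappa}$, chosen generically so that the segment from $\bm{\kappa}$ to $\bm{\kappa}^{(m)}$ meets the hyperplanes $\{\langle \theta,\cdot\rangle = 0\}$ only at its endpoint $\bm{\kappa}$ and only for $\theta \in R(\bm{\kappa})$; this is possible because, by Lemma \ref{lem:identification of the singularities by lattice data}, $R(\bm{\kappa})$ is a union of irreducible root subsystems and the remaining walls stay at positive distance from $\bm{\kappa}$. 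Theorem \ref{thm:Hyperkahler structure on the minimal resolution of Kleinnian singularity} then gives ALE hyperk\"ahler instantons $(X, g^{(m)}, I^{(m)}, J^{(m)}, K^{(m)})$ with period triple $\bm{\kappa}^{(m)}$. Next I would show, as in Lemma 3.3 of \cite{Kro1}, that these instantons have uniform $L^2$-curvature bounds and uniform ALE asymptotics (\ref{cond:asymptotic of ALE at infinity}) with constants depending only on $\Gamma$ and a bound for $\{\bm{\kappa}^{(m)}\}$; by the compactness theory of \cite{BKN}, \cite{Bando}, \cite{Nakajima} a subsequence then converges in the pointed Cheeger-Gromov sense to a complete ALE hyperk\"ahler orbifold $X_{\bm{\kappa}}$, and the asymptotics pass to the limit. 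By the standard relation between periods and diameters of vanishing cycles (\cite{Kro1}, \cite{Kro2}), a class $\theta$ with $\theta^2=-2$ collapses along the sequence exactly when its $\omega_{I^{(m)}}$-, $\omega_{J^{(m)}}$- and $\omega_{K^{(m)}}$-periods all tend to $0$, i.e. exactly when $\theta \in R(\bm{\kappa})$; hence $X_{\bm{\kappa}}$ has isolated quotient singularities $\C^2/\Gamma_i$, with the $\Gamma_i$ attached to the irreducible components of $R(\bm{\kappa})$ via Lemma \ref{lem:identification of the singularities by lattice data}, it is a deformation of $\C^2/\Gamma$, its minimal resolution $\rho: \widetilde{X}_{\bm{\kappa}} \to X_{\bm{\kappa}}$ is diffeomorphic to $X$, and the exceptional curves $[E_j]$ span precisely $R(\bm{\kappa})$. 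Continuity of periods under Cheeger-Gromov convergence gives $[\rho^*\bm{\omega}_{\bm{\kappa}}] = \lim \bm{\kappa}^{(m)} = \bm{\kappa}$, which also yields the displayed description of $R(\bm{\kappa})$.

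\emph{Uniqueness and the main obstacle.} For uniqueness I would argue as in the proof of Theorem \ref{thm:torelli type theorem of ALE}: given two solutions with the same $\bm{\kappa}$, one deforms their hyperk\"ahler structures so that the period triples move into a common chamber satisfying (\ref{cond:generality condition}) --- such deformations exist because the rational double points in question have unobstructed semiuniversal deformations, made explicit by Kronheimer's quotient --- applies Theorem \ref{thm:torelli type theorem of ALE} to identify the resulting smooth instantons, and passes back to $\bm{\kappa}$ using uniqueness of the Cheeger-Gromov limit (cf. the remark after Proposition \ref{prop:ALE parameterized on the discriminant loci}); this is the orbifold Torelli statement, proved just as in Theorem \ref{thm:orbifold injectivity of period of ALE}. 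The hard part is the degeneration analysis in the existence step: one must show that the only collapse along $\{(X,g^{(m)})\}$ is the shrinking of the $(-2)$-cycles in $R(\bm{\kappa})$ --- no extra bubbling, no collapse of larger cycles, no loss of topology at infinity --- so that the limit is genuinely a hyperk\"ahler \emph{orbifold} deforming $\C^2/\Gamma$ with exactly the predicted singular points. This rests on the uniform estimates intrinsic to Kronheimer's construction, in particular a lower injectivity-radius bound away from the vanishing cycles together with the uniform ALE decay; in the quotient picture it reduces to checking that the stabilizers of the $\mathcal{F}$-action on $\mu^{-1}(\bm{\kappa})$ are exactly the finite groups dictated by the walls through $\bm{\kappa}$.
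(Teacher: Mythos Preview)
Your proposal is correct, and in fact you already name the paper's approach as your alternative: the paper simply observes that Kronheimer's hyperk\"ahler quotient $\mu^{-1}(\bm{\kappa})/\mathcal{F}$ is already analyzed at non-generic levels in the proof of Lemma~3.3 of \cite{Kro1}, where it is shown to yield an ALE hyperk\"ahler orbifold on the discriminant loci; the remaining assertions about periods, exceptional divisors, and uniqueness are then read off from the rest of \cite{Kro1}. By contrast, your primary route --- perturb $\bm{\kappa}$ to generic $\bm{\kappa}^{(m)}$, apply Theorem~\ref{thm:Hyperkahler structure on the minimal resolution of Kleinnian singularity}, and extract an orbifold limit via the compactness theory of \cite{BKN}, \cite{Bando}, \cite{Nakajima} --- avoids opening up the quotient construction and works entirely at the level of Riemannian convergence. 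This is valid and conceptually self-contained, but it is considerably more work: as you correctly flag, the degeneration analysis (uniform ALE decay, no extraneous bubbling, collapse of precisely the cycles in $R(\bm{\kappa})$) is the real content, and it ultimately rests on the same estimates internal to Kronheimer's construction that the direct approach invokes outright. The paper's two-sentence proof reflects that the statement is essentially a repackaging of \cite{Kro1}; your longer route would be the one to take if one wished to treat Kronheimer's existence theorem as a black box.
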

\begin{proof}
    In the proof of Lemma 3.3.\cite{Kro1}, it is proved that on discriminant loci, orbifold ALE hyperk\"ahler gravitational instantons are parameterized (though only homeomorphism is asserted in the statement).
    And the rest is exactly a summary of the results of the rest of the paper \cite{Kro1}. 
    
\end{proof}

\begin{prop}\label{prop:appendix 5}
    Let $Y_1$ and $Y_2$ be affine ALE gravitational instantons constructed in Proposition \ref{prop:appendix 1}.
    Let $Y_1$ corresponds to $\kappa_1$ and $Y_2$ corresponds to $\kappa_2$. 
    Then they are isomorphic if and only if there are an automorphism $\alpha \in \mathrm{O}(H^2(\widetilde{\C^2/\Gamma};\Z))$ and a constant $c \in \C \setminus\{0\}$ such that 
    \begin{equation}
        \alpha(\kappa_1) = c \kappa_2 .
    \end{equation}
\end{prop}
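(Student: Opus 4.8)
The plan is to establish Proposition~\ref{prop:appendix 5} by reducing it to the Torelli-type statements for ALE gravitational instantons already recorded in the excerpt, namely Theorem~\ref{thm:torelli type theorem of ALE} in the manifold case and Theorems~\ref{thm:orbifold surjectivity of torelli of ALE} and~\ref{thm:orbifold injectivity of period of ALE} in the orbifold (affine) case. The ``if'' direction is essentially the content of Theorem~\ref{thm:orbifold injectivity of period of ALE}: given $\alpha \in \mathrm{O}(H^2(\widetilde{\C^2/\Gamma};\Z))$ and $c \in \C \setminus \{0\}$ with $\alpha(\kappa_1) = c\,\kappa_2$, I would first absorb the scaling constant by noting that rescaling the holomorphic $2$-form $\Omega_{\kappa} \mapsto c\,\Omega_{\kappa}$ (equivalently, rescaling the hyperk\"ahler metric) sends $Y_{\kappa_2}$ to $Y_{c\kappa_2}$ isomorphically as a complex surface; then the isometry $\alpha$ of the resolution lattice identifies $Y_{\alpha(\kappa_1)}$ with $Y_{\kappa_1}$ by the orbifold Torelli theorem. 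Composing these gives $Y_1 \cong Y_2$.

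For the ``only if'' direction, suppose $f : Y_1 \to Y_2$ is an isomorphism of affine ALE gravitational instantons. First I would pass to minimal resolutions: since the minimal resolution of an ADE surface singularity is canonical, $f$ lifts to an isomorphism $\widetilde{f} : \widetilde{Y}_1 \to \widetilde{Y}_2$ of the underlying complex (hence differentiable) manifolds, and both are diffeomorphic to the fixed model $\widetilde{\C^2/\Gamma}$ via the markings built into Proposition~\ref{prop:appendix 1}. Thus $\widetilde{f}$ induces an isometry $\alpha := (\widetilde{f}^{-1})^* \in \mathrm{O}(H^2(\widetilde{\C^2/\Gamma};\Z))$ of the resolution lattice. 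Next I would track the holomorphic $2$-forms: $\widetilde{f}^* \Omega_{\kappa_2}$ is a nowhere-vanishing holomorphic $2$-form on $\widetilde{Y}_1$, hence equals $\lambda\,\Omega_{\kappa_1}$ for some $\lambda \in \C \setminus \{0\}$ because $H^0$ of the canonical bundle is one-dimensional (the resolution of an affine ALE instanton has trivial canonical bundle with a unique-up-to-scale section, the pullback of $\Omega_{\kappa}$). Taking cohomology classes and using $[\rho^* \Omega_{\kappa_j}] = \kappa_j$ from Theorem~\ref{thm:Hyperkahler structure on deformations on Kleinian singularity}, this yields $\alpha(\kappa_1) = \lambda^{-1}\kappa_2$ (after checking the direction of the identification carefully), which is the desired relation with $c = \lambda^{-1}$.

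The main obstacle I anticipate is the bookkeeping around markings and the functoriality of minimal resolutions in the orbifold category: I need to be sure that the diffeomorphism between $\widetilde{Y}_j$ and the fixed model $\widetilde{\C^2/\Gamma}$ used to define $\kappa_j$ is compatible with the lift $\widetilde{f}$, so that the induced lattice map really is an honest element of $\mathrm{O}(H^2(\widetilde{\C^2/\Gamma};\Z))$ rather than merely an isometry between two a priori different lattices. This is where one must invoke that any two such diffeomorphisms differ by an element of the isometry group (or that the relevant mapping class group surjects onto $\mathrm{O}$), a point that is implicit in Kronheimer's setup. A secondary, more routine point is justifying that the scaling constant $c$ is genuinely free, i.e. that $Y_{c\kappa}$ and $Y_\kappa$ are isomorphic as complex surfaces for all $c \neq 0$; this follows because multiplying $\Omega$ by $c$ and the metric by $|c|$ preserves the normalization $\omega^2 = \tfrac{1}{2}\Omega \wedge \overline{\Omega}$ up to an overall homothety, and homothetic hyperk\"ahler structures have the same underlying complex manifold. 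With these two technical points in hand, the proposition follows by combining the orbifold Torelli theorems exactly as above.
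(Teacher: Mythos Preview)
Your overall strategy matches the paper's: reduce to (the orbifold extension of) Kronheimer's Torelli theorem together with the observation that the $\C^*$-scaling of $\kappa$ leaves the underlying $I$-complex surface unchanged. The paper's proof is extremely terse --- it simply asserts that Kronheimer's argument in \cite{Kro2} carries over verbatim to orbifolds and that the natural $\C^*$-action accounts for the constant $c$ --- whereas you unpack the ``only if'' direction more explicitly via the lift to minimal resolutions.

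Two points need attention. First, a circularity: by the remark immediately following Theorems~\ref{thm:Hyperkahler structure on deformations on Kleinian singularity}--\ref{thm:orbifold injectivity of period of ALE}, the appendix is precisely where those theorems are being justified, so invoking Theorem~\ref{thm:orbifold injectivity of period of ALE} for the ``if'' direction of Proposition~\ref{prop:appendix 5} is not permissible. You should instead cite Kronheimer's original result directly and argue, as the paper does, that the proof extends to the orbifold setting.

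Second, and more substantively, your ``only if'' argument rests on the claim that the minimal resolution $\widetilde{Y}_1$ has a one-dimensional space of global holomorphic $2$-forms. This is false: $\widetilde{Y}_1$ is non-compact, and on such spaces $H^0(K)$ is typically infinite-dimensional (already on $\C^2$ one has $f\,dz_1\wedge dz_2$ for any entire $f$). To rescue the step $\widetilde{f}^*\Omega_{\kappa_2} = \lambda\,\Omega_{\kappa_1}$ you must use more than mere biholomorphism. If the isomorphism is read as an isometry of K\"ahler orbifolds (the natural reading, since an affine ALE gravitational instanton is by definition a pair $(Y,g)$), then the normalization $\omega^2=\tfrac12\,\Omega\wedge\overline\Omega$ forces the holomorphic function $\widetilde{f}^*\Omega_{\kappa_2}/\Omega_{\kappa_1}$ to have constant modulus one, hence to be constant by the open mapping theorem. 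Alternatively one can argue algebraically that the only global units on these affine varieties are constants. Either way, the justification you state is incorrect and must be replaced.
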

\begin{proof}
    The proof of the Torelli theorem for ALE hyperk\"ahler gravitational instantons given in \cite{Kro2} is completely valid for orbifolds. 
    Hence two orbifolds $Y_1$ and $Y_2$ have the same (up to gauge) hyperk\"ahler triples if and only if they corresponds points in the same $\mathrm{O}(H^2(\widetilde{\C^2/\Gamma} ;\Z))$ orbit.
    On the other hand, a natural $\C^*$ action does not change holomorphic structure, then the assertion follows.
\end{proof}

\begin{prop}\label{prop:appendix2}
    Let $X$ be a (non-compact) complete hyperk\"ahler orbifold with only singularities isomorphic to $0 \in \C^2/\Gamma$ for $\Gamma \subset \mathrm{SL}(2;\C)$.
    Assume that $X$ is ALE of order $4$.
    Then $X$ is isomorphic to an orbifold in Proposition \ref{prop:appendix 1} as hyperk\"ahler orbifolds.
\end{prop}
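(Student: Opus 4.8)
\emph{Overview.} The plan is to match $X$ with one of the model orbifolds of Proposition \ref{prop:appendix 1} through its periods and then invoke the orbifold Torelli theorem (cf.\ Proposition \ref{prop:appendix 5}); the argument thus runs parallel to Kronheimer's proof of surjectivity of the period map in \cite{Kro1}\cite{Kro2}, now carrying the ADE points of $X$ along as extra data. First the topology: since $X$ is ALE of order $4$, its end is modeled on $(\C^2\setminus\mathrm{B}(0;R))/\Gamma$ for a unique finite $\Gamma\subset\mathrm{SL}(2;\C)$. Let $\rho\colon\widetilde{X}\to X$ be the minimal resolution of the finitely many isolated Kleinian singularities of $X$; then $\widetilde{X}$ is a smooth $4$-manifold, still ALE of order $4$ with the same end, with $b_1=0$ and negative definite intersection form, deformation retracting onto a configuration of $2$-spheres. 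As in the topological part of Kronheimer's classification \cite{Kro1}, this forces a diffeomorphism $\widetilde{X}\cong\widetilde{\C^2/\Gamma}$; I fix one, obtaining a marking $\widetilde{\alpha}\colon H^2(\widetilde{X};\Z)\xrightarrow{\sim}H^2(\widetilde{\C^2/\Gamma};\Z)$.

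\emph{Periods and the nondegeneracy condition.} Let $\bm{\omega}=(\omega_1,\omega_2,\omega_3)$ be the hyperk\"ahler triple of $X$, defined on $X^{\mathrm{reg}}=\widetilde{X}\setminus E$, where $E=\bigcup E_j$ is the exceptional set of $\rho$. Near infinity each $\omega_i$ differs from $\omega^{\mathrm{Euc}}$ by the differential of an $O(r^{-3})$ $1$-form, and, the resolution of Kleinian singularities being crepant, $\rho^*\omega_i$ extends to a closed form on $\widetilde{X}$; hence $\omega_i$ has a well-defined period $\kappa_i\in H^2(\widetilde{X};\R)$, and I set $\bm{\kappa}=(\kappa_1,\kappa_2,\kappa_3)$. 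Since each $E_j$ is contracted by $\rho$ we have $\langle[E_j],\kappa_i\rangle=0$ for all $i$, so the root sublattice $Q_E:=\mathrm{Span}_\Z\{[E_j]\}$ is contained in the set $R(\bm{\kappa})$ of roots orthogonal to $\bm{\kappa}$. The crucial point is the reverse inclusion $R(\bm{\kappa})\subseteq Q_E$, which I would establish following Kronheimer: given $\theta$ with $\theta^2=-2$ and $\theta\perp\kappa_i$ for all $i$, a generic member $I_\zeta$ of the twistor family of $X$, lifted to $\widetilde{X}$ (which has trivial canonical bundle, being holomorphic symplectic), makes $\theta$ orthogonal to the K\"ahler class $[\omega_\zeta]$ and of type $(1,1)$; a Riemann--Roch argument on the non-compact surface $\widetilde{X}$, exactly as in \cite{Kro1}, shows that $\theta$ or $-\theta$ is represented by an effective divisor $C$; but then $\langle[C],[\omega_\zeta]\rangle=0$ while $\omega_\zeta$ is a genuine K\"ahler form on $X^{\mathrm{reg}}$, so $C$ is contracted in $X$, i.e.\ supported on $E$, whence $\theta\in Q_E$. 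Thus $R(\bm{\kappa})=Q_E$.

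\emph{Conclusion via Torelli, and the main obstacle.} Apply Proposition \ref{prop:appendix 1} to the triple $\bm{\kappa}$, transported to $H^2(\widetilde{\C^2/\Gamma};\R)^{\oplus 3}$ by $\widetilde{\alpha}$: it yields a model hyperk\"ahler orbifold $X_{\bm{\kappa}}$ with periods $\bm{\kappa}$ whose exceptional lattice is $R(\bm{\kappa})$. Since $R(\bm{\kappa})=Q_E$ on both sides, $\widetilde{\alpha}$ matches the contracted curves of $\widetilde{X}$ with those of $\widetilde{X_{\bm{\kappa}}}$, so (the two configurations having the same Dynkin type) it descends to a diffeomorphism of orbifolds $X\to X_{\bm{\kappa}}$ carrying the singular locus to the singular locus and compatible with the markings and periods. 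By the Torelli theorem for ALE hyperk\"ahler orbifolds --- Kronheimer's proof in \cite{Kro2} applies verbatim, as in Proposition \ref{prop:appendix 5} --- $X$ is isomorphic to $X_{\bm{\kappa}}$ as hyperk\"ahler orbifolds, which is the assertion. The main obstacle is the nondegeneracy step $R(\bm{\kappa})=Q_E$: one has to check that Kronheimer's Riemann--Roch / index arguments identifying the effective $(-2)$-classes go through on the resolution $\widetilde{X}$ (carrying only the degenerate, pulled-back metric), and that pushing the resulting curve down to $X^{\mathrm{reg}}$ really produces the contradiction with K\"ahler positivity; the topological identification of $\widetilde{X}$ and the final appeal to orbifold Torelli are then routine adaptations of the smooth theory.
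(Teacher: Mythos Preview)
Your approach is correct and runs parallel to the paper's, with one notable difference: for the topological identification $\widetilde{X}\cong\widetilde{\C^2/\Gamma}$, the paper invokes Bando's Theorem~4 (\cite{Bando}), which equips the minimal resolution $\widetilde{X}$ with a \emph{genuine smooth} ALE hyperk\"ahler metric (not merely the degenerate pullback $\rho^*\bm{\omega}$) and hence places $\widetilde{X}$ directly inside Kronheimer's smooth classification, whereas you argue the topology by hand from the end structure and the intersection form. Both proofs then finish via the orbifold Torelli theorem (Proposition~\ref{prop:appendix 5}). Your explicit treatment of the nondegeneracy step $R(\bm{\kappa})=Q_E$ is in fact more careful than the paper's own proof, which simply absorbs this point into the blanket assertion that Kronheimer's argument in \cite{Kro2} ``is completely valid for orbifolds.'' The obstacle you flag --- running the twistor/Riemann--Roch/effective-$(-2)$-curve argument on $\widetilde{X}$, where the pulled-back triple is degenerate along $E$ --- is genuine, and the paper does not address it any more explicitly than you do; Bando's smooth metric on $\widetilde{X}$ does offer one way to restore a nondegenerate hyperk\"ahler triple on which Kronheimer's original argument runs verbatim, though one would then still need to relate its periods to $\bm{\kappa}$.
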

\begin{proof}
    By \cite{Bando} Theorem 4, the minimal resolution of $B$ admits a structure of ALE hyperk\"ahler gravitational instanton. 
    In particular $B$ is diffeomorphic to the minimal resolution of $\C^2/\Gamma$ for some $\Gamma \subset \mathrm{SL}(2;\C)$.
    Then there is an orbifold ALE hyperk\"ahler gravitational instanton $X_{\bm{\kappa}}$ where $\bm{\kappa}$ is the triple of the cohomology classes of the triple of pull-back of K\"ahler forms on $B$.
    $X_{\bm{\kappa}}$ must be isomorphic to $B$ by Proposition \ref{prop:appendix 5} .
\end{proof}
\begin{cor}
    Let $B$ be a bubbling limit of a non-collapsing sequence of Ricci-flat K3 surfaces.
    Then $B$ is one of the orbifold appearing above.
\end{cor}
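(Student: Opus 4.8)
The plan is to reduce the statement to Proposition \ref{prop:appendix2} by verifying that any bubbling limit $B$ obtained by rescaling a non-collapsing sequence of Ricci-flat K3 surfaces at base points is a complete hyperk\"ahler orbifold whose only singularities are of the form $0 \in \C^2/\Gamma$ for a finite $\Gamma \subset \mathrm{SL}(2;\C)$ and which is ALE of order $4$. Since each K3 metric $g_j$ is Ricci-flat, the rescaled metrics $c_j^2 g_j$ are again Ricci-flat, and a pointed Gromov--Hausdorff / Cheeger--Gromov subsequential limit $B = (B, g_B, b)$ is a complete Ricci-flat length space which is smooth off a finite singular set.

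First I would recall, following Anderson \cite{Anderson}, Nakajima \cite{Nakajima} and Bando--Kasue--Nakajima \cite{BKN}, that the non-collapsing hypothesis forces $B$ to have Euclidean volume growth $\mathrm{Vol}_{g_B}(\mathrm{B}(b;r)) \geq C r^4$ for all $r>0$, and finite curvature energy $\int_B \|\mathrm{Rm}(g_B)\|^2 \, \mathrm{Vol}_{g_B} < \infty$, the latter being controlled by the uniformly finite total energy $\int_{X_j}\|\mathrm{Rm}(g_j)\|^2$ of a K3 surface. By the $\varepsilon$-regularity and orbifold compactness theory of these authors, $B$ is then an ALE orbifold with finitely many singular points, each modeled on $\R^4/\Gamma'$ for a finite $\Gamma' \subset \mathrm{SO}(4)$, and with tangent cone at infinity $\R^4/\Gamma$ for a finite $\Gamma \subset \mathrm{SO}(4)$; Bando's sharp decay estimate then upgrades the ALE order to $4$. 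This is the step I expect to be the main obstacle: establishing that $B$ is an orbifold of precisely this shape assembles the hard analytic input of the cited papers, and the flat and cone cases $\C^2$, $\C^2/\Gamma$ appear here as the degenerate possibilities.

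Next I would promote the Ricci-flat K\"ahler structure to a hyperk\"ahler one. The complex structures $I_j$ of the K3 surfaces converge locally smoothly on $B^{\mathrm{reg}}$ to a $g_B$-parallel complex structure, and since $c_1(X_j)=0$ the holonomy is contained in $\mathrm{SU}(2)=\mathrm{Sp}(1)$; the argument of \cite{Bando} Propositions 5 and 6 shows that $B$ in fact carries a full hyperk\"ahler triple. In particular the orbifold groups $\Gamma'$ and the group $\Gamma$ at infinity lie in $\mathrm{Sp}(1) \subset \mathrm{SL}(2;\C)$, so all singularities of $B$ are isomorphic to $0\in\C^2/\Gamma'$, i.e. of ADE type.

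Finally, $B$ is now a complete hyperk\"ahler orbifold with only singularities isomorphic to $0\in\C^2/\Gamma$, $\Gamma\subset\mathrm{SL}(2;\C)$, which is ALE of order $4$, so Proposition \ref{prop:appendix2} applies verbatim and gives that $B$ is isomorphic as a hyperk\"ahler orbifold to one of the orbifolds $X_{\bm{\kappa}}$ constructed in Proposition \ref{prop:appendix 1}. That is exactly the assertion.
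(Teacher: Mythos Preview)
Your proposal is correct and follows essentially the same route as the paper: the appendix proof simply cites \cite{BKN} to assert that $B$ satisfies the hypotheses of Proposition \ref{prop:appendix2}, and your expansion of this (Euclidean volume growth and finite energy from \cite{Nakajima}, ALE orbifold structure of order $4$ from \cite{BKN} and \cite{Bando}, hyperk\"ahler structure from \cite{Bando} Propositions 5--6) is exactly the content behind that citation, already spelled out in the body of the paper as Lemma \ref{lem:limit is affine ALE}.
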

\begin{proof}
    By \cite{BKN}, $B$ satisfies the assumption of the above Proposition \ref{prop:appendix2}.
\end{proof}

\bibliographystyle{plain}
\bibliography{Bibliography}

\end{document}